\theoremstyle{plain}
\newtheorem{theorem}{\bf Theorem}[section]
\newtheorem{proposition}[theorem]{\bf Proposition}
\theoremstyle{definition}
\newtheorem{definition}[theorem]{\bf Definition}
\newtheorem{remark}[theorem]{\bf Remark}
\DeclareMathOperator{\Amp}{\overline{Amp}}
\DeclareMathOperator{\Ann}{Ann}
\newcommand{\be}{{\boldsymbol{e}}}
\newcommand{\CC}{\mathbb{C}}
\newcommand{\Cstar}{\CC^\times}
\newcommand{\cE}{\mathcal{E}}
\newcommand{\cF}{\mathcal{F}}
\DeclareMathOperator{\Fl}{Fl}
\newcommand{\hG}{\widehat{G}}
\DeclareMathOperator{\GL}{GL}
\DeclareMathOperator{\Gr}{Gr}
\newcommand{\cL}{\mathcal{L}}
\DeclareMathOperator{\NE}{NE}
\newcommand{\MC}{\overline{\NE}}
\newcommand{\cO}{\mathcal{O}}
\newcommand{\PP}{\mathbb{P}}
\newcommand{\QQ}{\mathbb{Q}}
\DeclareMathOperator{\rf}{rf}
\DeclareMathOperator{\rk}{rk}
\newcommand{\Str}{\mathrm{Str}}
\newcommand{\bbV}{\mathbb{V}}
\newcommand{\cV}{\mathcal{V}}
\newcommand{\ZZ}{\mathbb{Z}}
\newcommand{\GIT}{/\!\!/}
\newcommand{\Obro}[2]{\text{\rm B\O S}^{#1}_{#2}} 
\newcommand{\MM}[2]{\mathrm{MM}^3_{#1\text{--}#2}} 
\newcommand{\BB}[2]{B^{#1}_{#2}}
\newcommand{\VV}[2]{V^{#1}_{#2}}
\newcommand{\MW}[2]{\mathrm{MW}^{#1}_{#2}}
\renewcommand{\SS}[2]{S^2_{#2}}
\newcommand{\FI}[2]{\mathrm{FI}^{#1}_{#2}}
\newlength{\matrixheight}
\newlength{\padabove}
\newcommand{\evnrow}{\rowcolor[gray]{0.95}}
\newcommand{\oddrow}{}
\title{Quantum Periods for Certain Four-Dimensional Fano Manifolds}
\author[Coates]{Tom Coates}
\author[Galkin]{Sergey Galkin}
\author[Kasprzyk]{Alexander Kasprzyk}
\author[Strangeway]{Andrew Strangeway}
\address{Department of Mathematics\\
Imperial College London\\
180 Queen's Gate\\
London SW7 2AZ
\\UK}
\email{t.coates@imperial.ac.uk}
\email{a.m.kasprzyk@imperial.ac.uk}
\email{a.strangeway09@imperial.ac.uk}
\address{National Research University Higher School of Economics (HSE)\\
Faculty of Mathematics and Laboratory of Algebraic Geometry\\
7 Vavilova str., 117312, Moscow, Russia}
\email{Sergey.Galkin@phystech.edu}
\subjclass[2010]{14J45, 14N35 (Primary); 14J33 (Secondary)}
\keywords{Fano manifolds, $4$-folds, quantum period, mirror symmetry}
\begin{document}

\begin{abstract}
  We collect a list of known four-dimensional Fano manifolds and compute their quantum periods.  This list includes all four-dimensional Fano manifolds of index greater than one, all four-dimensional toric Fano manifolds, all four-dimensional products of lower-dimensional Fano manifolds, and certain complete intersections in projective bundles.
\end{abstract}
\maketitle

\section{Introduction}

In this paper we take the first step towards implementing a program, laid out in \cite{ProcECM}, to find and classify four-dimensional Fano manifolds using mirror symmetry.  We compute quantum periods and quantum differential equations for many known four-dimensional Fano manifolds, using techniques described in~\cite{QC105}.  Our basic reference for the theory of Fano manifolds is the book by Iskovskikh--Prokhorov~\cite{Iskovskikh--Prokhorov}.  Recall that the \emph{index} of a Fano manifold $X$ is the largest integer~$r$ such that ${-K_X} = rH$ for some ample divisor~$H$.  A four-dimensional Fano manifold has index at most~$5$~\cite{Shokurov}.  Four-dimensional Fano manifolds with index $r>1$ have been classified.  In what follows we compute the quantum periods and quantum differential equations for all four-dimensional Fano manifolds of index $r>1$, for all four-dimensional Fano toric manifolds, and for certain other four-dimensional Fano manifolds of index~$1$.

\subsection*{Highlights}  We draw the reader's attention to:
\begin{itemize}
\item \S\ref{sec:double_cover}, where new tools for computing Gromov--Witten invariants (twisted $I$-functions for toric complete intersections~\cite{CCIT:toric_stacks_2} and an improved Quantum Lefschetz theorem~\cite{Coates}) make a big practical difference to the computation of quantum periods.  This should be contrasted with~\cite[\S19]{QC105}, where the new techniques were not available.
\item \S\ref{sec:null_correlation}, which relies on a new construction of  Szurek--Wi\'sniewski's null-correlation bundle~\cite{Szurek--Wisniewski} that may be of independent interest.
\item The tables of regularized quantum period sequences in Appendix~\ref{appendix:periods}.
\item The numerical calculation of quantum differential operators in \S\ref{sec:qdes} and Appendix~\ref{appendix:qdes}.  This suggests in particular that, for each four-dimensional Fano manifold $X$ with Fano index $r>1$, the regularized quantum differential equation of $X$ is either extremal or of low ramification.
\item \S\ref{sec:MW^4_17} and \S\ref{operator:MW^4_17}, which together give an example of a product such that the regularized quantum differential equation for each factor is extremal, but the regularized quantum differential equation for the product itself is not.
\end{itemize}
This paper is accompanied by fully-commented source code, written in the computational algebra system Magma~\cite{Magma}. This will allow the reader to verify the calculations presented here, or to perform similar computations.

\section{Methodology}
\label{sec:methodology}

The quantum period $G_X$  of a Fano manifold $X$ is a generating function
\begin{align}
  \label{eq:quantum_period}
  G_X(t) = 1 + \sum_{d=1}^\infty c_d t^d && \text{$t \in \CC$}
\end{align}
for certain genus-zero Gromov--Witten invariants~$c_d$ of $X$.  A precise definition can be found in \cite[\S B]{QC105}, but roughly speaking $c_d$ is the `virtual number' of degree-$d$ rational curves $C$ in $X$ that pass through a given point and satisfy certain constraints on their complex structure.  (The degree of a curve $C$ here is the quantity $\langle {-K_X}, C\rangle$.)  The quantum period is discussed in detail in \cite{ProcECM,QC105}; one property that will be important in what follows is that the regularized quantum period
\begin{align}
  \label{eq:regularized_quantum_period}
  \hG_X(t) = 1 + \sum_{d=1}^\infty d! c_d t^d && \text{$t \in \CC$, $|t| \ll \infty$}
\end{align}
satisfies a differential equation called the \emph{regularized quantum differential equation} of $X$:
\begin{align}
  \label{eq:regularized_QDE}
  L_X \hG_X \equiv 0 && L_X = \sum_{m=0}^{m=N} p_m(t) D^m
\end{align}
where the $p_m$ are polynomials and $D = t \frac{d}{dt}$.  It is expected that the regularized quantum differential equation for a Fano manifold $X$ is \emph{extremal} or \emph{of low ramification}, as described in \S\ref{sec:qdes} below.  This is a strong constraint on the Gromov--Witten invariants $c_d$ of $X$.

Quantum periods for a broad class of toric complete intersections can be computed using Givental's mirror theorem~\cite{Givental}:

\begin{theorem}[\!\!\protect{\cite[Corollary~C.2]{QC105}}]
  \label{thm:toric_mirror}
  Let $X$ be a toric Fano manifold and let $D_1,\ldots,D_N \in  H^2(X;\QQ)$ be the cohomology classes Poincar\'e-dual to the   torus-invariant divisors on $X$.  The quantum period of $X$ is:
  \[
  G_X(t) = \sum_{\substack{
      \beta \in H_2(X;\ZZ) : \\
      \text{$\langle \beta, D_i \rangle \geq 0$ $\forall i$}
    }}
  \frac{
    t^{\langle \beta,{-K_X}\rangle}
  }{
    \prod_{i=1}^N \langle \beta, D_i \rangle !
  }
  \]
\end{theorem}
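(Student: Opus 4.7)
The approach is to derive this as a corollary of Givental's mirror theorem, which identifies the small $J$-function of a toric Fano manifold with an explicit hypergeometric $I$-function. The quantum period is then extracted from the $J$-function by taking the component along the fundamental class $1 \in H^0(X;\QQ)$.

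First I would write down Givental's $I$-function for the toric Fano manifold $X$:
\[
I_X(z) = z \sum_{\beta \in \NE(X)_\ZZ} Q^\beta \prod_{i=1}^N \frac{\prod_{k \leq 0}(D_i + kz)}{\prod_{k \leq \langle \beta, D_i\rangle}(D_i + kz)},
\]
and invoke Givental's theorem to identify $I_X = J_X$ (in the Fano case no mirror map is needed). Next I would recall the well-known fact that $G_X(t)$ is obtained from the component of $J_X$ along $1 \in H^0(X;\QQ)$ by setting $z=1$ and substituting the Novikov variable $Q^\beta$ by $t^{\langle \beta, -K_X\rangle}$ (compatibly with the grading).

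The heart of the argument is the term-by-term analysis of the $I$-function. For the $i$-th factor, when $m_i := \langle \beta, D_i\rangle \geq 0$ the ratio telescopes to $\prod_{k=1}^{m_i}(D_i+kz)^{-1}$, whose expansion in powers of $D_i$ has scalar term $1/(m_i!\,z^{m_i})$. When $m_i < 0$ the ratio is a polynomial $\prod_{k=m_i+1}^{0}(D_i+kz)$, which contains the factor $D_i$ (from $k=0$) and hence contributes zero to the $1$-component in cohomology. Consequently only classes $\beta$ with $\langle \beta,D_i\rangle\geq 0$ for all $i$ survive, and the scalar part of $I_X$ equals
\[
z \sum_{\substack{\beta : \langle \beta, D_i\rangle \geq 0 \,\forall i}} \frac{Q^\beta}{z^{\sum_i \langle \beta, D_i\rangle} \prod_i \langle \beta, D_i\rangle!}.
\]
Using the toric identity $-K_X = \sum_i D_i$, the exponent of $z$ in each summand is $-\langle \beta, -K_X\rangle$. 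Applying the substitution $Q^\beta \mapsto t^{\langle \beta, -K_X\rangle}$ and $z \mapsto 1$ (the overall prefactor $z$ is absorbed by the normalisation that relates the $J$-function to $G_X$) yields the claimed expression.

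The main obstacle is not combinatorial but conventional: one must pin down carefully the precise dictionary between the $I$-function, the $J$-function, the Novikov ring, and the generating series $G_X(t)$ — in particular the role of $z$ and the fact that for Fano manifolds the mirror map is trivial, so that Givental's theorem gives a closed-form answer rather than an implicit one. Once these conventions are fixed, each step above is essentially a formal manipulation.
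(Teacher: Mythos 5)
Your derivation is correct and is essentially the argument behind the cited result: the paper states this theorem without proof, quoting \cite[Corollary~C.2]{QC105}, where it is obtained exactly as you describe --- from Givental's mirror theorem identifying $I_X=J_X$ for toric Fanos, followed by extraction of the unit-class component, with classes $\beta$ having some $\langle\beta,D_i\rangle<0$ killed by the factor $D_i$ at $k=0$ and the identity $-K_X=\sum_i D_i$ converting the $z$-exponent into the anticanonical degree. No substantive difference from the intended proof.
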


\begin{theorem}[\!\!\protect{\cite[Corollary~D.5]{QC105}}]
  \label{thm:toric_ci_mirror}
  Let $Y$ be a toric Fano manifold, and let $D_1,\ldots,D_N \in H^2(Y;\QQ)$ be the cohomology classes Poincar\'e-dual to the torus-invariant divisors on $Y$.  Let $X$ be the complete intersection in $Y$ defined by a regular section of $E = L_1 \oplus \cdots \oplus L_s$ where each $L_i$ is a nef line bundle, and let $\rho_i = c_1(L_i)$, $1 \leq i \leq s$.  Suppose that the class $c_1(Y)-\Lambda$ is ample on $Y$, where $\Lambda =  c_1(L_1) + \cdots + c_1(L_s)$.  Then $X$ is Fano, and the quantum period of $X$ is:
  \[
  G_X(t) = e^{{-c} t} \sum_{\substack{
      \beta \in H_2(Y;\ZZ) : \\
      \text{$\langle \beta, D_i \rangle \geq 0$ $\forall i$}
      }}
    t^{\langle \beta,{-K_Y}- \Lambda\rangle}
    \frac{
      \prod_{j=1}^s \langle \beta, \rho_j \rangle !
    }{
      \prod_{i=1}^N \langle \beta, D_i \rangle !
    }
    \]
    where $c$ is the unique rational number such that the right-hand side has the form $1 + O(t^2)$.  
\end{theorem}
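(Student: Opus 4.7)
The plan is to combine three ingredients: Givental's toric mirror theorem \cite{Givental} applied to the ambient toric Fano $Y$, twisting by the Euler class of the bundle $E$, and Quantum Lefschetz to descend the resulting $I$-function from $Y$ to $X$. The quantum period of $X$ is then extracted as the component of $J_X$ paired with the fundamental class.

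The Fano-ness of $X$ is immediate from adjunction: $c_1(X) = (c_1(Y) - \Lambda)|_X$, and the ampleness of $c_1(Y) - \Lambda$ on $Y$ restricts to ampleness on $X$. Next, by Givental's mirror theorem for toric varieties, the small $J$-function of $Y$ is computed by an explicit hypergeometric $I$-function indexed by the Mori cone of $Y$, each summand built from products of the form $\prod_k (D_i + kz)$; since $Y$ is Fano the mirror map on $Y$ is trivial and $I_Y = J_Y$. Form the $E$-twisted $I$-function $I_{Y,E}$ by multiplying the degree-$\beta$ summand by the Euler class contribution $\prod_{j=1}^s \prod_{k=1}^{\langle \beta, \rho_j\rangle}(\rho_j + kz)$. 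This is the standard input for a Lefschetz-type transfer.

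Now invoke Quantum Lefschetz in the improved form of \cite{Coates}, which is valid when each $L_j$ is merely nef rather than ample: $I_{Y,E}$ agrees, after a mirror map, with the pushforward of $J_X$ to $Y$. The ampleness of $c_1(Y) - \Lambda$ ensures that the $z \to \infty$ expansion of $I_{Y,E}$ has the form $z + f(t)\cdot 1 + O(z^{-1})$ along the unit direction, so the mirror map is just an affine shift along the unit axis. On the quantum period this shift manifests as the prefactor $e^{-ct}$, with $c$ pinned down uniquely by the normalization $G_X(t) = 1 + O(t^2)$.

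Finally, the specialization $D_i \mapsto 0$ and $z \mapsto 1$ needed to read $G_X$ off from $J_X$ sends each $\prod_k(D_i + kz)$ to $\langle \beta, D_i\rangle !$ and each $\prod_k(\rho_j + kz)$ to $\langle \beta, \rho_j\rangle !$, producing the stated sum. The main technical obstacle is the Quantum Lefschetz step when some $L_j$ fail to be convex: the classical argument breaks down, and the improvement of \cite{Coates} is indispensable. Verifying that the ampleness hypothesis on $c_1(Y) - \Lambda$ is exactly what is needed to reduce the mirror map to the simple affine shift encoded by $e^{-ct}$, while remaining compatible with the nef-but-not-ample hypothesis on each $L_j$, is the delicate matching point.
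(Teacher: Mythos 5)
The paper does not prove this statement itself: it is imported verbatim from \cite[Corollary~D.5]{QC105}, and your outline --- Givental's mirror theorem for the toric Fano $Y$ (with trivial mirror map), the hypergeometric modification of $I_Y$ by the Euler class of $E$, Quantum Lefschetz, and the observation that ampleness of $c_1(Y)-\Lambda$ forces the $z^0$-term of the twisted $I$-function to be a multiple of the unit class, absorbed into the prefactor $e^{-ct}$ --- is essentially the derivation given there, so the approach matches. One correction: on a smooth toric variety every nef line bundle is globally generated and hence convex, so the classical Coates--Givental Quantum Lefschetz already applies and the improved version of \cite{Coates} is not, as you claim, indispensable for this particular theorem; using the stronger result is harmless but the stated ``main technical obstacle'' does not actually arise here.
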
 

An analogous mirror theorem holds for certain complete intersections in toric Deligne--Mumford stacks, but we will need only the case where the ambient stack is a weighted projective space:

\begin{theorem}[\!\!\protect{\cite[Proposition~D.9]{QC105}}]
  \label{thm:wps_ci_mirror}
  Let $Y$ be the weighted projective space $\PP(w_0,\ldots,w_n)$, let $X$ be a smooth Fano manifold given as a complete intersection in $Y$ defined by a section of $E = \cO(d_1) \oplus \cdots \oplus \cO(d_m)$, and let ${-k} = w_0 + \cdots + w_n - d_1 - \cdots - d_m$.  Suppose that each $d_i$ is a positive integer, that ${-k}>0$, and that $w_i$ divides $d_j$ for all $i$, $j$ such that $0 \leq i \leq n$ and $1 \leq j \leq m$.  Then the quantum period of $X$ is:
  \[
  G_X(t) = e^{{-c} t} \sum_{d=0}^\infty
  t^{{-k} d}
  \frac{
    \prod_{j=1}^m (d d_j) !
  }{
    \prod_{i=1}^n (d w_i) !
  }
  \]
  where $c$ is the unique rational number such that the right-hand side has the form $1 + O(t^2)$.  
\end{theorem}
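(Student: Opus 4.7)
The plan is to derive this as a special case of the toric-stack analogue of Theorem~\ref{thm:toric_ci_mirror}, using the mirror theorem for complete intersections in toric Deligne--Mumford stacks developed in \cite{CCIT:toric_stacks_2}. The overall strategy parallels the smooth toric case, with the extra care needed to control the twisted sectors of the inertia stack $IY$.

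First I would present $Y = \PP(w_0, \ldots, w_n)$ as the GIT quotient $[(\CC^{n+1}\setminus\{0\})/\Cstar]$ with $\Cstar$ acting via the weights $(w_0, \ldots, w_n)$. The toric-stack mirror theorem then produces an $I$-function $I_Y(t,z)$ with values in the Chen--Ruan cohomology of $Y$: it is a hypergeometric series indexed by degrees $d \in \frac{1}{\mathrm{lcm}(w_i)}\ZZ_{\geq 0}$, with the summand for $d$ supported on the twisted sector of $IY$ cut out by the fractional parts $(\{d w_0\}, \ldots, \{d w_n\})$. Twisting by $E = \cO(d_1)\oplus\cdots\oplus\cO(d_m)$ via the construction of \cite{CCIT:toric_stacks_2} multiplies the $d$-th summand by a ratio of hypergeometric factors built from the divisor classes $d_j H$, producing a twisted $I$-function $I_X(t,z)$.

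The divisibility hypothesis $w_i \mid d_j$ is the crucial input: for each $i$ with $\{d w_i\}\neq 0$ the pole of the untwisted factor in $I_Y$ is cancelled by a zero coming from the twist, so every summand lying in a non-trivial twisted sector drops out. Hence the sum collapses to $d \in \ZZ_{\geq 0}$, is supported on the untwisted sector $H^*(X) \subseteq H^*_{\mathrm{CR}}(Y)$, and simplifies to the hypergeometric expression displayed in the theorem. Passing from $I_X$ to the $J$-function $J_X$ is then routine: since $X$ is Fano of index $-k \geq 1$, the $z$-expansion of $I_X$ forces the mirror map to be linear of the form $t \mapsto t + c z^{-1}$ for a unique rational $c$, and this linear shift is exactly what produces the $e^{-ct}$ prefactor. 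Extracting the coefficient of the fundamental class and applying the standard dictionary of \cite[Appendix~B]{QC105} to recover $G_X(t)$ from $J_X$ completes the argument.

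The main technical obstacle is the second step: one has to verify carefully that the twisting factors really do annihilate every contribution from the non-trivial twisted sectors of $IY$. This is the only place where the divisibility hypothesis is used, and the bookkeeping of which pole in $I_Y$ is cancelled by which zero in the twist — component by component across $IY$ — is where it would be easiest to slip up.
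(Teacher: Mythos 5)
The paper does not prove this statement itself: it is quoted from \cite[Proposition~D.9]{QC105}, and your overall route --- the Coates--Corti--Lee--Tseng $I$-function of the weighted projective stack, an Euler-class twist by $E$, restriction to $X$, and a mirror-map argument producing the $e^{-ct}$ prefactor --- is the same route taken there. The last step is fine: Fano index ${-k}\geq 1$ forces the mirror transformation to be $\tau \mapsto \tau + c\cdot 1\cdot z^{-1}$ for a single rational $c$ (and $c=0$ unless ${-k}=1$), which is exactly the exponential prefactor.

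The step you yourself single out as the crux, however, is wrong as stated. The hypothesis $w_i \mid d_j$ does not make the twisting factors vanish on the non-trivial sectors of $IY$, and there is no pole/zero cancellation of the kind you describe. Test it on $V^4_2$: for $Y=\PP(1,1,1,1,1,3)$ and $E=\cO(6)$, the degree $d=\tfrac13$ summand of the untwisted $I$-function is a non-zero multiple of the unit class of the sector supported at $[0:\cdots:0:1]$ (the relevant factors are $(P+z/3)^{-5}(3P+z)^{-1}$, none of which is a pole after restricting to that sector), while the twisting factor at this degree is $\prod_{b=1}^{2}(\lambda+6P+bz)$, which has no zero. So the non-trivial-sector summands survive in the twisted $I$-function of $Y$. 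They drop out of the quantum period for a different reason: $X$ is a manifold, hence is contained in the locus of $Y$ with trivial stabilizers, so the restriction $i^{*}\colon H^{\bullet}_{\mathrm{CR}}(Y)\to H^{\bullet}(X)$ annihilates every class supported on a non-trivial twisted sector --- equivalently, the quantum period only extracts the component along the unit class of the untwisted sector, and only integral $d$ contribute to that component. (Compare the argument for $\MW{4}{4}$ in \S\ref{sec:double_cover}, where terms are killed because $i^{*}(p_2-p_1)^3=0$, not because the $I$-function summands vanish.) The divisibility hypothesis is needed elsewhere: it says each $\cO(d_j)$ has age zero on every sector, i.e.\ is pulled back from the coarse moduli space and hence convex, which is what licenses the Quantum Lefschetz step and gives the modification factors the shape $\prod_{b=1}^{dd_j}(\lambda+d_jP+bz)$, i.e.\ the $(dd_j)!$ in the final formula. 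As written, your argument both misattributes the vanishing of the twisted sectors and leaves the applicability of Quantum Lefschetz unjustified.
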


\noindent The quantum period of a product is the product of the quantum periods:

\begin{theorem}[\!\!\protect{\cite[Corollary~E.4]{QC105}}]
  \label{thm:products}
  Let $X$ and $Y$ be smooth projective complex manifolds.  Then:
  \[
  G_{X \times Y}(t) = G_X(t)\, G_Y(t)
  \]
\end{theorem}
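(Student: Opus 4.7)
The plan is to reduce multiplicativity of the quantum period to the Künneth factorization of Givental's small $J$-function. Recall that the quantum period can be recovered from $J_X(\tau,z)$ by restricting $\tau$ to a specific point (or line), extracting a specific scalar cohomological component, and rescaling; see \cite[\S B]{QC105}. In the definition $G_X(t) = 1 + \sum_{d\geq 1} c_d t^d$, the coefficient $c_d = \sum_{\langle -K_X,\beta\rangle=d}\langle [\mathrm{pt}]\,\psi^{d-2}\rangle^X_{0,1,\beta}$ is a one-pointed genus-zero descendant Gromov--Witten invariant with a point insertion.

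The first step is to establish the Künneth formula for the small $J$-function, namely $J_{X\times Y}(\tau_X + \tau_Y, z) = J_X(\tau_X, z)\otimes J_Y(\tau_Y, z)$ under the identification $H^*(X\times Y;\QQ) \cong H^*(X;\QQ)\otimes H^*(Y;\QQ)$. This follows from a product formula for virtual fundamental classes on moduli spaces of genus-zero stable maps to products of smooth projective varieties (cf.~Behrend, Kontsevich--Manin): the projections $\overline{M}_{0,n}(X\times Y,(\beta_X,\beta_Y)) \to \overline{M}_{0,n}(X,\beta_X)$ and the analogue for $Y$, combined with the Künneth decomposition of cohomology and the behaviour of $\psi$-classes under these projections, express the product $J$-function as the tensor product of the factor $J$-functions.

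Once this Künneth identity is in hand, the final step is essentially automatic. Under Künneth one has $[\mathrm{pt}_{X\times Y}] = [\mathrm{pt}_X]\otimes[\mathrm{pt}_Y]$, the anticanonical class splits as $-K_{X\times Y} = \pi_X^*(-K_X) + \pi_Y^*(-K_Y)$, so any substitution along the anticanonical line on $X\times Y$ factors into compatible substitutions on $X$ and $Y$, and the scalar extraction defining $G$ respects the tensor product, yielding $G_{X\times Y}(t) = G_X(t)\,G_Y(t)$. The main obstacle is the technical verification of the product formula for virtual fundamental classes and the compatibility of $\psi$-classes under projection of moduli spaces: $\overline{M}_{0,n}(X\times Y, (\beta_X,\beta_Y))$ is not literally the product of the factor moduli spaces because source curves may fail to remain stable after projection, so one must track stabilization carefully. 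This is standard from the Gromov--Witten axioms but requires some care; the $J$-function route is the cleanest packaging, and it is essentially the path taken in~\cite[Corollary~E.4]{QC105}.
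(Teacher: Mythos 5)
Your proposal is correct and follows essentially the same route as the proof cited in the paper: \cite[Appendix~E]{QC105} establishes the K\"unneth factorization $J_{X\times Y}(\tau_X+\tau_Y,z)=J_X(\tau_X,z)\otimes J_Y(\tau_Y,z)$ via Behrend's product formula for virtual fundamental classes, and Corollary~E.4 then extracts the unit component along the anticanonical substitution exactly as you describe. The one technical point you flag --- that the moduli space of stable maps to a product is not literally a fiber product of the factor moduli spaces, so the virtual classes must be compared after stabilization --- is precisely the content of Behrend's theorem, which the cited proof invokes rather than reproves.
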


\noindent As we will see below, another powerful tool for computing quantum periods is the Abelian/non-Abelian Correspondence of Ciocan-Fontanine--Kim--Sabbah~\cite{Ciocan-Fontanine--Kim--Sabbah}.  We now proceed to the calculation of quantum periods.

\section{Four-Dimensional Fano Manifolds of Index~$5$}
\label{sec:index_5}

The only example here is $\PP^4$ \cite{Kobayashi--Ochiai,Kollar,Serpico}.  This is a toric variety.  Theorem~\ref{thm:toric_mirror} yields:
\begin{align*}
  G_{\PP^4}(t)  = \sum_{d=0}^\infty \frac{t^{5d}}{(d!)^5} 
  &&
  \text{[\hyperref[table:index_5]{regularized quantum period p.~\pageref*{table:index_5}}, \hyperref[operator:P4]{operator p.~\pageref*{operator:P4}}]}
\end{align*}

\section{Four-Dimensional Fano Manifolds of Index~$4$}
\label{sec:index_4}

The only example here is the quadric $Q^4 \subset \PP^5$~\cite{Kollar,Serpico}.  This is a complete intersection in a toric variety.  Theorem~\ref{thm:toric_ci_mirror} yields:
\begin{align*}
  G_{Q^4}(t)  = \sum_{d=0}^\infty \frac{(2d)!}{(d!)^6} t^{4d}
  &&
  \text{[\hyperref[table:index_4]{regularized quantum period p.~\pageref*{table:index_4}}, \hyperref[operator:Q4]{operator p.~\pageref*{operator:Q4}}]}
\end{align*}

\section{Four-Dimensional Fano Manifolds of Index~$3$}
\label{sec:index_3}

There are six examples \cite{Fujita:polarized_1,Fujita:polarized_2,Fujita:polarized_3,Fujita:book,Iskovskikh:Fano_1,Iskovskikh:anticanonical,Iskovskikh--Prokhorov}, which are known as del~Pezzo fourfolds: 
\begin{itemize}
\item a sextic hypersurface $\FI{4}{1}$ in the weighted projective space $\PP^5(1^4,2,3)$;
\item a quartic hypersurface $\FI{4}{2}$ in the weighted projective space $\PP^5(1^5,2)$;
\item a cubic hypersurface $\FI{4}{3} \subset \PP^5$;
\item a complete intersection $\FI{4}{4} \subset \PP^6$ of type $(2H) \cap (2H)$, where $H = \cO_{\PP^6}(1)$; 
\item a complete intersection $\FI{4}{5} \subset \Gr(2,5)$ of type $H \cap H$, where $H$ is the hyperplane bundle; and
\item $\FI{4}{6} = \PP^2 \times \PP^2$.
\end{itemize}
The first four examples here are complete intersections in weighted projective spaces.  Theorem~\ref{thm:wps_ci_mirror} yields:
\begin{align*}
  & G_{\FI{4}{1}}(t)  = \sum_{d=0}^\infty \frac{(6d)!}{(3d)!(2d)!(d!)^4} t^{3d} \\
  & G_{\FI{4}{2}}(t)  = \sum_{d=0}^\infty \frac{(4d)!}{(2d)!(d!)^5} t^{3d} \\
  & G_{\FI{4}{3}}(t)  = \sum_{d=0}^\infty \frac{(3d)!}{(d!)^6} t^{3d} \\
  & G_{\FI{4}{4}}(t)  = \sum_{d=0}^\infty \frac{(2d)!(2d)!}{(d!)^7} t^{3d} 
\end{align*}
For $\FI{4}{5} \subset \Gr(2,5)$ we use the Abelian/non-Abelian Correspondence, applying Theorem~F.1 in \cite{QC105} with $a=2$, $b=c=d=e=0$.  This yields:
\[
G_{\FI{4}{5}}(t) = \sum_{l=0}^\infty \sum_{m=0}^\infty
(-1)^{l+m}
t^{3l+3m}
\frac
{
  (l+m)! (l+m)!
}
{
  (l!)^5 (m!)^5 
}
\big(1-5 (m-l)H_m \big)
\]
where $H_m$ is the $m$th harmonic number.
For $\PP^2 \times \PP^2$, combining Theorem~\ref{thm:products} with \cite[Example~G.2]{QC105} yields:
\[
G_{\PP^2 \times \PP^2}(t) = 
\sum_{l=0}^\infty \sum_{m=0}^\infty \frac{t^{3l+3m}}{(l!)^3 (m!)^3}
\]

\begin{table*}[h!]
  \centering
  \begin{tabular}{cccp{2ex}ccc} \toprule
    \multicolumn{1}{c}{$X$} &
    \multicolumn{1}{c}{$\hG_X$} &
    \multicolumn{1}{c}{$L_X$} & &
    \multicolumn{1}{c}{$X$} &
    \multicolumn{1}{c}{$\hG_X$} &
    \multicolumn{1}{c}{$L_X$} \\
    \cmidrule{1-3} \cmidrule{5-7} 
    $\FI{4}{1}$ & \hyperref[table:index_3]{p.~\pageref*{table:index_3}} & \hyperref[operator:FI^4_1]{p.~\pageref*{operator:FI^4_1}} &&
    $\FI{4}{4}$ & \hyperref[table:index_3]{p.~\pageref*{table:index_3}} & \hyperref[operator:FI^4_4]{p.~\pageref*{operator:FI^4_4}} \\
    $\FI{4}{2}$ & \hyperref[table:index_3]{p.~\pageref*{table:index_3}} & \hyperref[operator:FI^4_2]{p.~\pageref*{operator:FI^4_2}} &&
    $\FI{4}{5}$ & \hyperref[table:index_3]{p.~\pageref*{table:index_3}} & \hyperref[operator:FI^4_5]{p.~\pageref*{operator:FI^4_5}} \\
    $\FI{4}{3}$ & \hyperref[table:index_3]{p.~\pageref*{table:index_3}} & \hyperref[operator:FI^4_3]{p.~\pageref*{operator:FI^4_3}} &&
    $\PP^2 \times \PP^2$ & \hyperref[table:index_3]{p.~\pageref*{table:index_3}} & \hyperref[operator:FI^4_6]{p.~\pageref*{operator:FI^4_6}} \\
    \bottomrule \\
  \end{tabular}
\end{table*}

\section{Four-Dimensional Fano Manifolds of Index~$2$}

Consider now a four-dimensional Fano manifold with index~$r=2$ and Picard rank~$\rho$.  

\subsection{The Case $\rho=1$}
\label{sec:index_2_rank_1}

Four-dimensional Fano manifolds with index~$r=2$ and Picard rank~$\rho=1$ have been classified~\cite{Mukai:PNAS,Wilson},~\cite[Chapter~5]{Iskovskikh--Prokhorov}.  Up to deformation, there are 9~examples: the `linear unsections' of smooth three-dimensional Fano manifolds with $\rho=1$, $r=1$, and degree at most $144$.   We compute the quantum periods of these examples using the constructions in \cite[\S\S8--16]{QC105}, writing $V^4_k$ for a four-dimensional Fano manifold with $\rho=1$, $r=2$, and degree~$16k$.

\subsubsection{$V^4_2$} \hfill [\hyperref[table:index_2]{regularized quantum period p.~\pageref*{table:index_2}}, \hyperref[operator:V^4_2]{operator p.~\pageref*{operator:V^4_2}}] \label{sec:V^4_2}  

This is a sextic hypersurface in $\PP^5(1^5,3)$.  Proposition~D.9 in \cite{QC105} yields:
\[
G_{V^4_2}(t) = \sum_{d=0}^\infty \frac{(6d)!}{(d!)^5(3d)!} t^{2d}
\]

\subsubsection{$V^4_4$} \hfill [\hyperref[table:index_2]{regularized quantum period p.~\pageref*{table:index_2}}, \hyperref[operator:V^4_4]{operator p.~\pageref*{operator:V^4_4}}] \label{sec:V^4_4}  

This is a quartic hypersurface in $\PP^5$.  Theorem~\ref{thm:toric_ci_mirror} yields:
\[
G_{V^4_4}(t) = \sum_{d=0}^\infty \frac{(4d)!}{(d!)^6} t^{2d}
\]

\subsubsection{$V^4_6$} \hfill [\hyperref[table:index_2]{regularized quantum period p.~\pageref*{table:index_2}}, \hyperref[operator:V^4_6]{operator p.~\pageref*{operator:V^4_6}}] \label{sec:V^4_6}  

This is a complete intersection of type $(2H) \cap (3H)$ in $\PP^6$, where $H = \cO_{\PP^6}(1)$.  Theorem~\ref{thm:toric_ci_mirror} yields:
\[
G_{V^4_6}(t) = \sum_{d=0}^\infty \frac{(2d)!(3d)!}{(d!)^7} t^{2d}
\]

\subsubsection{$V^4_8$} \hfill [\hyperref[table:index_2]{regularized quantum period p.~\pageref*{table:index_2}}, \hyperref[operator:V^4_8]{operator p.~\pageref*{operator:V^4_8}}] \label{sec:V^4_8}  

This is a complete intersection of type $(2H) \cap (2H) \cap (2H)$ in $\PP^7$, where $H = \cO_{\PP^7}(1)$.  Theorem~\ref{thm:toric_ci_mirror} yields:
\[
G_{V^4_8}(t) = \sum_{d=0}^\infty \frac{\big((2d)!\big)^3}{(d!)^8} t^{2d}
\]

\subsubsection{$V^4_{10}$} \hfill [\hyperref[table:index_2]{regularized quantum period p.~\pageref*{table:index_2}}, \hyperref[operator:V^4_10]{operator p.~\pageref*{operator:V^4_10}}] \label{sec:V^4_10} 

This is a complete intersection in $\Gr(2,5)$, cut out by a regular section of $\cO(1) \oplus \cO(2)$ where $\cO(1)$ is the pullback of $\cO(1)$ on projective space under the Pl\"ucker embedding.  We apply Theorem~F.1 in \cite{QC105} with $a=b=1$ and $c=d=e=0$.  This yields:
\[
G_{V^4_{10}}(t) = \sum_{l=0}^\infty \sum_{m=0}^\infty
(-1)^{l+m}
t^{2l+2m}
\frac
{
  (l+m)! (2l+2m)!
}
{
  (l!)^5 (m!)^5 
}
\big(1-5 (m-l)H_m \big)
\]
where $H_m$ is the $m$th harmonic number.

\subsubsection{$V^4_{12}$} \hfill [\hyperref[table:index_2]{regularized quantum period p.~\pageref*{table:index_2}}, \hyperref[operator:V^4_12]{operator p.~\pageref*{operator:V^4_12}}] \label{sec:V^4_12} 

This is the subvariety of $\Gr(2,5)$ cut out by a regular section of $S^\star \otimes \det S^\star$, where $S$ is the universal bundle of subspaces on $\Gr(2,5)$.  We apply Theorem~F.1 in \cite{QC105} with $c = 1$ and $a  = b = d = e = 0$. This yields:
\[
G_{V^4_{12}}(t) = \sum_{l=0}^\infty \sum_{m=0}^\infty
({-1})^{l+m} t^{2l+2m}
\frac
{
  (2l+m)! (l+2m)!
}
{
  (l!)^5 (m!)^5 
}
\big(1+(m-l)(H_{2l+m} + 2 H_{l + 2m}-5H_m) \big)
\]

\subsubsection{$V^4_{14}$} \hfill [\hyperref[table:index_2]{regularized quantum period p.~\pageref*{table:index_2}}, \hyperref[operator:V^4_14]{operator p.~\pageref*{operator:V^4_14}}] \label{sec:V^4_14}  

This is a complete intersection in $\Gr(2,6)$, cut out by a regular section of $\cO(1)^{\oplus 4}$ where $\cO(1)$ is the pullback of $\cO(1)$ on projective space under the Pl\"ucker embedding.  We apply Theorem~F.1 in \cite{QC105} with $a=4$ and $b=c=d=e=0$.  This yields:
\[
G_{V^4_{14}}(t) = \sum_{l=0}^\infty \sum_{m=0}^\infty
(-1)^{l+m}
t^{2l+2m}
\frac
{
  \big((l+m)!\big)^4
}
{
  (l!)^6 (m!)^6
}
\big(1-6 (m-l)H_m \big)
\]

\subsubsection{$V^4_{16}$} \hfill [\hyperref[table:index_2]{regularized quantum period p.~\pageref*{table:index_2}}, \hyperref[operator:V^4_16]{operator p.~\pageref*{operator:V^4_16}}] \label{sec:V^4_16} 

This is the subvariety of $\Gr(3,6)$ cut out by a regular section of $\wedge^2 S^\star \oplus (\det S^\star)^{\oplus 2}$, where $S$ is the universal bundle of subspaces on $\Gr(3,6)$.  We apply
Theorem~F.1 in \cite{QC105} with $a=2$, $b=c=d=0$, and $e=1$.   This shows that the quantum period $G_{V^4_{16}}(t)$ is the coefficient of $(p_2-p_1)(p_3-p_1)(p_3-p_2)$ in the expression:
\begin{multline*}
  \sum_{l_1 = 0}^\infty
  \sum_{l_2 = 0}^\infty
  \sum_{l_3 = 0}^\infty
  t^{2l_1 + 2 l_2 + 2 l_3}
  {
    \prod_{k=1}^{l_1+l_2+l_3} (p_1+p_2+p_3 + k)^2
    \over
    \prod_{j=1}^{j=3}
    \prod_{k=1}^{k=l_j} (p_j + k)^6
  }
  \prod_{1 \leq i < j \leq 3} \prod_{k=1}^{l_i + l_j} (p_i + p_j + k) \\
  \times  \prod_{1 \leq i < j \leq 3} \big(p_j-p_i + (l_j-l_i) \big) 
\end{multline*}
(Since this expression is totally antisymmetric in $p_1$,~$p_2$,~$p_3$, it is divisible by $(p_2-p_1)(p_3-p_1)(p_3-p_2)$.)

\subsubsection{$V^4_{18}$} \hfill [\hyperref[table:index_2]{regularized quantum period p.~\pageref*{table:index_2}}, \hyperref[operator:V^4_18]{operator p.~\pageref*{operator:V^4_18}}] \label{sec:V^4_18} 

This is the subvariety of $\Gr(5,7)$ cut out by a regular section of $\big( S \otimes \det S^\star\big) \oplus \det S^{\star}$, where $S$ is the universal bundle of subspaces on $\Gr(5,7)$.  We apply Theorem~F.1 in \cite{QC105} with $a=d=1$ and $b=c=e=0$.  This shows that the quantum period $G_{V^4_{18}}(t)$ is the coefficient of $\prod_{1 \leq i < j \leq 5} (p_j-p_i)$ in the expression:
\begin{multline*}
  \sum_{l_1 = 0}^\infty
  \sum_{l_2 = 0}^\infty
  \sum_{l_3 = 0}^\infty
  \sum_{l_4 = 0}^\infty
  \sum_{l_5 = 0}^\infty
  t^{2|l|}
  {
    \prod_{k=1}^{k=|l|} (p_1+p_2+\cdots + p_5 + k)
    \over
    \prod_{j=1}^{j=5}
    \prod_{k=1}^{k=l_j} (p_j + k)^7
  }
  \prod_{j=1}^{j=5}  \prod_{k=1}^{|l| - l_j}  (p_1 + p_2 + \cdots + p_5 - p_j + k) \\
  \times 
  \prod_{1 \leq i < j \leq 5} \big(p_j-p_i + (l_j-l_i) \big) 
\end{multline*}
where $|l| = l_1 + l_2 + \cdots + l_5$.  (As above, antisymmetry implies that the long formula here is divisible by $\prod_{1 \leq i < j \leq 5} (p_j-p_i)$.)


\subsection{The Case $\rho>1$}
\label{sec:index_2_higher_rank}

Four-dimensional Fano manifolds with $\rho>1$ and $r=2$ have been classified by Mukai~\cite{Mukai:PNAS,Mukai:10} and Wi\'sniewski~\cite{Wisniewski:classification}.  There are 18 deformation families, as follows.  We denote the $k$th such deformation family, as given in \cite[Table~12.7]{Iskovskikh--Prokhorov}, by $\MW{4}{k}$.

\subsubsection{$\MW{4}{1}$} \hfill [\hyperref[table:index_2]{regularized quantum period p.~\pageref*{table:index_2}}, \hyperref[operator:MW^4_1]{operator p.~\pageref*{operator:MW^4_1}}] \label{sec:MW^4_1}  

This is the product $\PP^1 \times B^3_1$.  Combining Theorem~\ref{thm:products} with \cite[Example~G.1]{QC105} and \cite[\S3]{QC105} yields:
\[
G_{\MW{4}{1}}(t) = 
\sum_{l=0}^\infty \sum_{m=0}^\infty \frac{(6m)!}{(l!)^2 (m!)^3 (2m)! (3m)!} t^{2l+2m}
\]

\subsubsection{$\MW{4}{2}$} \hfill [\hyperref[table:index_2]{regularized quantum period p.~\pageref*{table:index_2}}, \hyperref[operator:MW^4_2]{operator p.~\pageref*{operator:MW^4_2}}] \label{sec:MW^4_2}  

This is the product $\PP^1 \times B^3_2$.  Combining Theorem~\ref{thm:products} with \cite[Example~G.1]{QC105} and \cite[\S4]{QC105} yields:
\[
G_{\MW{4}{2}}(t) = 
\sum_{l=0}^\infty \sum_{m=0}^\infty \frac{(4m)!}{(l!)^2 (m!)^4 (2m)!} t^{2l+2m}
\]

\subsubsection{$\MW{4}{3}$} \hfill [\hyperref[table:index_2]{regularized quantum period p.~\pageref*{table:index_2}}, \hyperref[operator:MW^4_3]{operator p.~\pageref*{operator:MW^4_3}}] \label{sec:MW^4_3}  

This is the product $\PP^1 \times B^3_3$.  Combining Theorem~\ref{thm:products} with \cite[Example~G.1]{QC105} and \cite[\S5]{QC105} yields:
\[
G_{\MW{4}{3}}(t) = 
\sum_{l=0}^\infty \sum_{m=0}^\infty \frac{(3m)!}{(l!)^2 (m!)^5} t^{2l+2m}
\]

\subsubsection{$\MW{4}{4}$} \hfill [\hyperref[table:index_2]{regularized quantum period p.~\pageref*{table:index_2}}, \hyperref[operator:MW^4_4]{operator p.~\pageref*{operator:MW^4_4}}] \label{sec:MW^4_4}  
\label{sec:double_cover}

This is a double cover of $\PP^2 \times \PP^2$, branched over a divisor of bidegree $(2,2)$.  Consider  the toric
variety $F$ with weight data:
\[
\begin{array}{rrrrrrrl} 
  \multicolumn{1}{c}{x_0} & 
  \multicolumn{1}{c}{x_1} & 
  \multicolumn{1}{c}{x_2} & 
  \multicolumn{1}{c}{y_0} & 
  \multicolumn{1}{c}{y_1} & 
  \multicolumn{1}{c}{y_2} & 
  \multicolumn{1}{c}{w} & \\ 
  \cmidrule{1-7}
  1 & 1 & 1 & 0 & 0 & 0 & 1  & \hspace{1.5ex} L\\ 
  0 & 0 & 0 & 1 & 1 & 1 & 1 & \hspace{1.5ex} M \\
\end{array}
\]
and $\Amp F = \langle L, L+M \rangle$.  Let $X$ be a member of the linear system $|2L+2M|$ defined by the equation $w^2=f_{2,2}$, where $f_{2,2}$ is a bihomogeneous polynomial of degrees $2$ in $x_0$,~$x_1$,~$x_2$ and $2$ in $y_0$,~$y_1$,~$y_2$.  Let $p \colon F \dashrightarrow \PP^2 \times \PP^2$ be the rational map which sends (contravariantly) the homogeneous co-ordinate functions $[x_0,x_1,x_2,y_0,y_1,y_2] $ on $\PP^2_{x_0,x_1,x_2} \times \PP^2_{y_0,y_1,y_2}$ to $[x_0,x_1,x_2,y_0,y_1,y_2]$.  The restriction of $p$ to $X$ is a morphism, which exhibits $X$ as a double cover of $\PP^2 \times \PP^2$ branched over the locus $(f_{2,2}=0) \subset \PP^2_{x_0,x_1,x_2} \times \PP^2_{y_0,y_1,y_2}$.  Thus $X = \MW{4}{4}$.

Recall the definition of the $J$-function $J_X(t,z)$ from \cite[equation~11]{Coates--Givental}.  Recall from \cite{Coates} that there is a Lagrangian cone $\cL_X \subset H^\bullet(X;\Lambda_X)\otimes \CC(\!(z^{-1})\!)$ that encodes all genus-zero Gromov--Witten invariants of $X$, and a Lagrangian cone $\cL_\be \subset H^\bullet(F;\Lambda_F)\otimes \CC(\!(z^{-1})\!) \otimes \CC(\lambda)$ that encodes all genus-zero $(\be,2L+2M)$-twisted Gromov--Witten invariants of $F$.  Here $\Lambda_X$ and $\Lambda_F$ are certain Novikov rings and $\be$ is the total Chern class with parameter $\lambda$ (or, equivalently, $\be$ is the $S^1$-equivariant Euler class with respect to an action of $S^1$ described in \cite{Coates}; in this case one should regard $\lambda$ as the standard generator for the $S^1$-equivariant cohomology algebra of a point). The $J$-function $J_X$ is characterised by the fact that $J_X(t,{-z})$ is the unique point on $\cL_X$ of the form ${-z} + t + O(z^{-1})$.

Let $p_1$,~$p_2 \in H^2(F;\QQ)$ denote the first Chern class of $L$,~$L+M$ respectively and let $P_1$,~$P_2 \in H^2(X;\QQ)$ denote the pullbacks of $p_1$,~$p_2$ along the inclusion map $i \colon X \to F$.  Let $Q_1$,~$Q_2$ denote the elements of the Novikov ring $\Lambda_X$ that are dual respectively to $P_1$,~$P_2$, and note that $\Lambda_X$ and $\Lambda_F$ are canonically isomorphic (via $i_\star$). Theorem~21 in \cite{CCIT:toric_stacks_2} implies that:
\[
I(t_1,t_2,\lambda,z) = z e^{t_1 p_1/z} e^{t_2 p_2/z} 
\sum_{l=0}^\infty \sum_{m=0}^\infty
\frac{
  Q_1^l Q_2^m e^{l t_1} e^{m t_2} 
  \prod_{k=1}^{k=2m} (\lambda + 2p_2 + k z)
}{
  \prod_{k=1}^{k=l} (p_1 + k z)^3
  \prod_{k=1}^{k=m} (p_2 + k z)
}
\frac{
  \prod_{k={-\infty}}^{k=0} (p_2-p_1+kz)^3
}{
  \prod_{k={-\infty}}^{k=m-l} (p_2-p_1+kz)^3
}
\]
satisfies $I(t_1,t_2,\lambda,{-z}) \in \cL_\be$.  Theorem~1.1 in \cite{Coates} gives that $i^\star \cL_\be \big|_{\lambda = 0} \subset \cL_X$, and therefore that:
\[
i^\star I(t_1,t_2,0,{-z}) \in \cL_X
\]
Since the hypersurface $X$ misses the locus $y_1 = y_2 = y_3 = 0$ in $F$, we have that $i^\star (p_2-p_1)^3 = 0$.  Thus:
\[
i^\star I(t_1,t_2,0,z) =
z e^{t_1 P_1/z} e^{t_2 P_2/z} 
\sum_{l=0}^\infty \sum_{m=l}^\infty
\frac{
  Q_1^l Q_2^m e^{l t_1} e^{m t_2} 
  \prod_{k=1}^{k=2m} (2P_2 + k z)
}{
  \prod_{k=1}^{k=l} (P_1 + k z)^3
  \prod_{k=1}^{k=m} (P_2 + k z)
}
\frac{
  1
}{
  \prod_{k=1}^{k=m-l} (P_2-P_1+kz)^3
}
\]
In particular, $i^\star I(t_1,t_2,0,{-z})$ has the form ${-z} + t_1 P_1 + t_2 P_2 + O(z^{-1})$ and, from the characterisation of $J_X$ discussed above, we conclude that $J_X(t_1P_1+t_2P_2,{-z}) = i^\star I(t_1,t_2,0,{-z})$.  

To extract the quantum period $G_X$ from the $J$-function $J_X(t_1P_1+t_2P_2,z)$ we take the component along the unit class $1 \in H^\bullet(X;\QQ)$, set $z=1$, set $t_1 = t_2 = 0$, and set $Q_1 =1$,~$ Q_2 =t^2$, obtaining: 
\[
G_{\MW{4}{4}}(t) = \sum_{l=0}^\infty \sum_{m=l}^\infty \frac{(2m)!}{(l!)^3 m! ((m-l)!)^3} t^{2m}
\]
\subsubsection{$\MW{4}{5}$} \hfill [\hyperref[table:index_2]{regularized quantum period p.~\pageref*{table:index_2}}, \hyperref[operator:MW^4_5]{operator p.~\pageref*{operator:MW^4_5}}] \label{sec:MW^4_5}  

This is a divisor on $\PP^2 \times \PP^3$ of bidegree $(1,2)$.  Theorem~\ref{thm:toric_ci_mirror} yields:
\[
G_{\MW{4}{5}}(t) = \sum_{l=0}^\infty \sum_{m=0}^\infty \frac{(l+2m)!}{(l!)^3 (m!)^4} t^{2l+2m}
\]

\subsubsection{$\MW{4}{6}$} \hfill [\hyperref[table:index_2]{regularized quantum period p.~\pageref*{table:index_2}}, \hyperref[operator:MW^4_6]{operator p.~\pageref*{operator:MW^4_6}}] \label{sec:MW^4_6}

This is the product $\PP^1 \times B^3_4$.  Combining Theorem~\ref{thm:products} with \cite[Example~G.1]{QC105} and \cite[\S6]{QC105} yields:
\[
G_{\MW{4}{6}}(t) = 
\sum_{l=0}^\infty \sum_{m=0}^\infty \frac{(2m)!(2m)!}{(l!)^2 (m!)^6} t^{2l+2m}
\]

\subsubsection{$\MW{4}{7}$} \hfill [\hyperref[table:index_2]{regularized quantum period p.~\pageref*{table:index_2}}, \hyperref[operator:MW^4_7]{operator p.~\pageref*{operator:MW^4_7}}] \label{sec:MW^4_7}

This is a complete intersection of two divisors in $\PP^3 \times \PP^3$, each of bidegree $(1,1)$.  Theorem~\ref{thm:toric_ci_mirror} yields:
\[
G_{\MW{4}{7}}(t) = \sum_{l=0}^\infty \sum_{m=0}^\infty \frac{(l+m)!(l+m)!}{(l!)^4 (m!)^4} t^{2l+2m}
\]

\subsubsection{$\MW{4}{8}$} \hfill [\hyperref[table:index_2]{regularized quantum period p.~\pageref*{table:index_2}}, \hyperref[operator:MW^4_8]{operator p.~\pageref*{operator:MW^4_8}}] \label{sec:MW^4_8}  

This is a divisor on $\PP^2 \times Q^3$ of bidegree $(1,1)$.  Theorem~\ref{thm:toric_ci_mirror} yields:
\[
G_{\MW{4}{8}}(t) = \sum_{l=0}^\infty \sum_{m=0}^\infty \frac{(l+m)!(2m)!}{(l!)^3 (m!)^5} t^{2l+2m}
\]

\subsubsection{$\MW{4}{9}$} \hfill [\hyperref[table:index_2]{regularized quantum period p.~\pageref*{table:index_2}}, \hyperref[operator:MW^4_9]{operator p.~\pageref*{operator:MW^4_9}}] \label{sec:MW^4_9}  

This is the product $\PP^1 \times B^3_5$.  Combining Theorem~\ref{thm:products} with \cite[Example~G.1]{QC105} and \cite[\S7]{QC105} yields:
\[
G_{\MW{4}{9}}(t) = \sum_{l=0}^\infty \sum_{m=0}^\infty \sum_{n=0}^\infty
(-1)^{m+n}
t^{2l+2m+2n}
\frac
{
  \big((m+n)!\big)^3
}
{
  (l!)^2 (m!)^5 (n!)^5 
}
\big(1-5 (n-m)H_n \big)
\]

\subsubsection{$\MW{4}{10}$} \hfill [\hyperref[table:index_2]{regularized quantum period p.~\pageref*{table:index_2}}, \hyperref[operator:MW^4_10]{operator p.~\pageref*{operator:MW^4_10}}] \label{sec:MW^4_10}

This is the blow-up of the quadric $Q^4$ along a conic that is not contained in a plane lying in $Q^4$.  Consider the toric variety $F$ with weight data:
\[ 
\begin{array}{rrrrrrrl} 
  \multicolumn{1}{c}{s_0} & 
  \multicolumn{1}{c}{s_1} & 
  \multicolumn{1}{c}{s_2} & 
  \multicolumn{1}{c}{x} & 
  \multicolumn{1}{c}{x_3} & 
  \multicolumn{1}{c}{x_4} & 
  \multicolumn{1}{c}{x_5} & \\ 
  \cmidrule{1-7}
  1 & 1 & 1 & -1 & 0 & 0 & 0 & \hspace{1.5ex} L\\ 
  0 & 0 & 0 & 1 & 1  & 1 & 1 & \hspace{1.5ex} M \\
\end{array}
\]
and $\Amp F = \langle L, M\rangle$.  The morphism $F \to \PP^5$ that sends (contravariantly) the homogeneous co-ordinate functions $[x_0,x_1,\dots,x_5]$ to $[xs_0, xs_1, x s_2, x_3, x_4, x_5]$ blows up the plane $\Pi = (x_0=x_1=x_2=0)$ in $\PP^5$.  Thus a general member of $|2M|$ on $F$ is the blow-up of $Q^4$ with centre a conic on $\Pi$.  In other words, a general member of $|2M|$ on $F$ is $\MW{4}{10}$.  We have:
\begin{itemize}
\item $-K_F=2L+4M$ is ample, so that $F$ is a Fano variety;
\item $\MW{4}{10}\sim 2M$ is ample;
\item $-(K_F+2M)\sim 2L+2M$ is ample.
\end{itemize}
Theorem~\ref{thm:toric_ci_mirror} yields:
\[
G_{\MW{4}{10}}(t) = \sum_{l=0}^\infty \sum_{m=l}^\infty \frac{(2m)!}{(l!)^3(m-l)!(m!)^3} t^{2l+2m}
\]

\subsubsection{$\MW{4}{11}$} \hfill [\hyperref[table:index_2]{regularized quantum period p.~\pageref*{table:index_2}}, \hyperref[operator:MW^4_11]{operator p.~\pageref*{operator:MW^4_11}}] \label{sec:MW^4_11}
\label{sec:null_correlation}

This is the projective bundle $\PP_{\PP^3}(\cE^\vee)$, where $\cE \to \PP^3$ is the null-correlation bundle of Szurek--Wi\'sniewski~\cite{Szurek--Wisniewski}.  

\begin{remark}
  \label{rem:slice}
  For us $\PP(E)$ denotes the projective bundle of lines in $E$, whereas in Szurek--Wi\'sniewski and Iskovskikh--Prokhorov, $\PP(E)$ denotes the projective bundle of one-dimensional quotients.  With our conventions, if $\pi \colon \PP(E) \to X$ is a projective bundle then $E^\star = \pi_\star \cO_{\PP(E)}(1)$, and so a regular section $s \in \Gamma\big(\PP(E),\cO_{\PP(E)}(1)\big)$ vanishes on $\PP(F^\star) \subset \PP(E)$, where the vector bundle $F \to X$ is the cokernel of $s \colon \cO_{\PP(E)} \to E^\star$.
\end{remark}

\begin{proposition}
  \label{pro:better_model}
  Let $V = \CC^4$, so that $\PP(V) = \PP^3$.  Consider the partial flag manifold $\Fl_{1,2}(V)$ and the natural projections
  \begin{equation}
    \label{eq:flag_diagram}
    \begin{aligned}
      \xymatrix{ & \Fl_{1,2}(V) \ar[dl]_{p_1} \ar[dr]^{p_2} \\ \PP(V) && \Gr(2,V) }
    \end{aligned}
  \end{equation}
  Let $|L|$ denote the linear system defined by $\cO(1)$ for the projective bundle $p_1$.  Then a general element of $|L|$ is $\PP(\cE^\vee)$, where $\cE \to \PP(V)$ is the null-correlation bundle.
\end{proposition}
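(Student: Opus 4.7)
The plan is to identify $p_1 \colon \Fl_{1,2}(V) \to \PP(V)$ with the projective bundle $\PP(Q)$, where $Q$ is the rank-$3$ tautological quotient on $\PP^3$, to realise global sections of $L$ (after a suitable twist) as alternating $2$-forms on $V$, and then to apply Remark~\ref{rem:slice} to read off the vanishing locus of a general section as $\PP(\cE^\vee)$.

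First I would check that the fibre of $p_1$ over a line $\ell \subset V$ parametrises the $2$-dimensional subspaces $W \subset V$ with $\ell \subset W$, and that such $W$ correspond bijectively to lines in the quotient $V/\ell$. Using the tautological sequence $0 \to \cO_{\PP(V)}(-1) \to V \otimes \cO \to Q \to 0$, this identifies $\Fl_{1,2}(V)$ with $\PP(Q)$ in the paper's convention.

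Next, dualising the Euler sequence and twisting by $\cO_{\PP^3}(1)$ yields $H^0(Q^\star \otimes \cO_{\PP^3}(1)) \cong \wedge^2 V^\star$, so a general section $s$ of $L$ corresponds to a non-degenerate alternating form $\omega$ on $V$. The induced map $\omega^\sharp \colon V \to V^\star$, composed with the dual tautological surjection $V^\star \otimes \cO \to \cO_{\PP^3}(1)$, factors through $Q$ (because $\omega(v,v) = 0$) and produces a surjection $\phi \colon Q \to \cO_{\PP^3}(1)$.

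By Szurek--Wi\'sniewski's definition of the null-correlation bundle, translated into the paper's conventions, $\cE$ fits into $0 \to \cO_{\PP^3}(-1) \to Q^\star \to \cE \to 0$ with the first arrow induced by the same $\omega$. Dualising gives $0 \to \cE^\vee \to Q \to \cO_{\PP^3}(1) \to 0$, so $\ker \phi \cong \cE^\vee$, and Remark~\ref{rem:slice} then identifies the vanishing locus of $s$ with $\PP(\cE^\vee)$ inside $\PP(Q) \cong \Fl_{1,2}(V)$. The main obstacle is convention-wrangling: the paper takes $\PP(-)$ to be the bundle of lines while Szurek--Wi\'sniewski take it to be the bundle of one-dimensional quotients, and the line bundle $L$ is naturally described only after twisting by a power of $p_1^\star \cO_{\PP^3}(1)$, so one has to present $p_1$ as some $\PP(E)$ with the correct twist in order for Remark~\ref{rem:slice} to return precisely $\PP(\cE^\vee)$.
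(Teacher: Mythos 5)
Your proof is correct, but it takes a noticeably more direct route than the paper's. The paper starts from the defining sequence $0 \to \cE(-1) \to T_{\PP(V)}(-2) \to \cO_{\PP(V)} \to 0$, uses Remark~\ref{rem:slice} to realise $\PP(\cE)$ as a hyperplane section of $\PP(T_{\PP(V)}(-2))$, and then spends most of the proof identifying $\PP(T_{\PP(V)}(-2)) \cong \PP\big(\Omega^2_{\PP(V)}(2)\big)$ with $\Fl_{1,2}(V)$ via the Pl\"ucker embedding, the map induced by the truncated Euler sequence, and the annihilator isomorphism $\Gr(2,V^\star)\cong\Gr(2,V)$ --- this extra work is needed there because the paper's starting object is a priori not the flag manifold, and it also verifies that both projections match the diagram~\eqref{eq:flag_diagram}. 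You instead begin with $\Fl_{1,2}(V)$ itself and use the elementary observation that the fibre of $p_1$ over $[v]$ is $\PP(V/\langle v\rangle)$, so $\Fl_{1,2}(V)=\PP(Q)$ with $Q=T_{\PP(V)}(-1)$; since $T_{\PP(V)}(-2)=Q(-1)$ this agrees with the paper's presentation up to the twist you correctly flag, and $H^0(Q^\star(1))=H^0(\Omega_{\PP(V)}(2))=\wedge^2V^\star$ lets you read a general section as a nondegenerate alternating form $\omega$. Your surjection $\phi\colon Q\to\cO(1)$, $w\mapsto\omega(v,w)$, is exactly the paper's defining sequence for $\cE$ twisted by $\cO(1)$, so $\ker\phi\cong\cE\cong\cE^\vee$ (self-duality from $\rk\cE=2$, $\det\cE=\cO$, which is also the paper's opening observation), and Remark~\ref{rem:slice} applied to $E=Q(-1)$ gives the vanishing locus $\PP(\cE(-1))\cong\PP(\cE^\vee)$. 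The one point to tidy is the sequence you attribute to Szurek--Wi\'sniewski: the quotient of $Q^\star$ by $\cO(-1)$ is naturally $\cE^\star$ rather than $\cE$, though self-duality makes this immaterial. What your approach buys is brevity and transparency; what the paper's buys is the explicit identification of the second projection with $p_2\colon\Fl_{1,2}(V)\to\Gr(2,V)$, which is not needed for the statement but is convenient for the Abelian/non-Abelian computation that follows.
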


\begin{proof}
  The null-correlation bundle has rank~$2$, and so the perfect pairing $\cE \otimes \cE \to \det \cE$ gives canonical isomorphisms $\cE^\vee \cong \cE \otimes (\det \cE)^{-1}$ and $\PP(\cE^\vee) \cong \PP(\cE)$.
  There is an exact sequence:
  \[
  \xymatrix{0 \ar[r] & \cE(-1) \ar[r] & T_{\PP(V)}(-2) \ar[r] & \cO_{\PP(V)} \ar[r] & 0}
  \]
  and the map $s^\star \colon T_{\PP(V)}(-2) \to \cO_{\PP(V)}$ therein defines a section $s \in \Gamma\big(\PP(T_{\PP(V)}(-2)),\cO_{\PP(T_{\PP(V)}(-2))}(1)\big)$.  The construction in Remark~\ref{rem:slice} now exhibits $\PP\big(\cE(-1)\big) \cong \PP(\cE)$ as the locus $(s=0)$ in $\PP(T_{\PP(V)}(-2))$.  We will identify $\PP(T_{\PP(V)}(-2))$ with the partial flag manifold $\Fl_{1,2}(V)$.

  For a vector bundle $\cF \to X$ of rank~$3$, the perfect pairing $\cF \otimes \wedge^2 \cF \to \det \cF$ gives a canonical isomorphism $\cF^\star \cong (\wedge^2 \cF) \otimes (\det \cF)^{-1}$.  Applying this with $\cF \to X$ equal to $\Omega_{\PP(V)}(2) \to \PP(V)$ gives:
  \[
  T_{\PP(V)}(-2) \cong \Omega_{\PP(V)}^2(2)
  \]
  where $\Omega_{\PP(V)}^2 := \wedge^2 \Omega_{\PP(V)}$.  We thus need to identify $\PP\big(\Omega^2_{\PP(V)}(2)\big)$ with $\Fl_{1,2}(V)$.

  The Pl\"ucker embedding $\Gr(2,V) \to \PP\big(\wedge^2 V\big)$ maps a subspace $W \in \Gr(2,V)$ to the antisymmetric linear map $L_W \colon V^\star \to V$, well-defined up to scale, given by:
  \[
  L_W(f) = f(w_1) w_2 - f(w_2) w_1 
  \]
  where $\{w_1,w_2\}$ is a basis for $W$.  The kernel of $L_W$ is the annihilator $W^\perp \subset V^\star$.  If $f \not \in W^\perp$ then $\langle L_W(f) \rangle = \ker f \cap W$; this implies in particular that $\rk L_W = 2$.  Thus the image of the Pl\"ucker embedding consists of (the lines spanned by) antisymmetric linear maps $L_W \colon V^\star \to V$ of rank~$2$, and one can recover $W \in \Gr(2,V)$ from its image $\langle L_W \rangle$ by taking the annihilator of the kernel:
  \[
  W = \big(\ker L_W \big)^\perp
  \]
  There is a canonical isomorphism $\Ann \colon \Gr(2,V) \to \Gr(2,V^\star)$ which maps $W \in \Gr(2,V)$ to $W^\perp$.
  
  Recall that our goal is to identify $\PP\big(\Omega^2_{\PP(V)}(2)\big)$ with $\Fl_{1,2}(V)$.  Let $q_1 \colon \PP\big(\Omega^2_{\PP(V)}(2)\big) \to \PP(V)$ denote the projection.  The Euler sequence:
  \[
  \xymatrix{ 0 \ar[r] & \Omega_{\PP(V)} \ar[r] & \pi^\star V^\star (-1) \ar[r] & \cO_{\PP(V)} \ar[r] & 0}
  \]
  gives, via \cite[II, Exercise 5.16]{Hartshorne}:
  \[
  \xymatrix{ 0 \ar[r] & \Omega^2_{\PP(V)} \ar[r] & \pi^\star \big(\wedge^2 V^\star \big) (-2) \ar[r] & \Omega_{\PP(V)} \ar[r] & 0}
  \]
  and thus:
  \begin{equation}
    \label{eq:Omega_2_sequence}
    \xymatrix{ 0 \ar[r] & \Omega^2_{\PP(V)}(2) \ar[r] & \pi^\star \big(\wedge^2 V^\star\big) \ar[r] & \pi^\star V^\star (1) \ar[r] & \cO_{\PP(V)}(2) \ar[r] & 0}
  \end{equation}
  This defines a map $f \colon \PP\big(\Omega^2_{\PP(V)}(2)\big) \to \PP\big(\wedge^2 V^\star\big)$.  Consider the fiber of the sequence \eqref{eq:Omega_2_sequence} over $[v] \in \PP(V)$.  The map $\pi^\star \big(\wedge^2 V^\star \big) \to \pi^\star V^\star (1)$ here is given by contraction with $v$, and so non-zero elements of the kernel are antisymmetric linear maps $V \to V^\star$ of rank~$2$.  (They are antisymmetric, hence have rank $0$,~$2$, or~$4$; they are non-zero, hence are not of rank~$0$; and they have the non-zero element $v$ in their kernel, hence are not of rank~$4$.)\phantom{.}  In particular, we see that the image of $f$ lies in $\Gr(2,V^\star)  \subset \PP\big(\wedge^2 V^\star\big)$.  Given $[x] \in \PP\big(\Omega^2_{\PP(V)}(2)\big)$, write $W_{[x]} \subset V^\star$ for the linear subspace defined by $f([x])$.  Suppose that $[x] \in \PP\big(\Omega^2_{\PP(V)}(2)\big)$ lies over $[v] \in \PP(V)$.  Then, applying the discussion in the previous paragraph but with $V$ there replaced by $V^\star$, we see that $v \in W_{[x]}^\perp$.  Thus, writing $q_2 \colon \PP\big(\Omega^2_{\PP(V)}(2)\big) \to \Gr(2,V)$ for the composition
  \[
  \xymatrix{ \PP\big(\Omega^2_{\PP(V)}(2)\big) \ar[r]^f & \Gr(2,V^\star) \ar[r]^\Ann & \Gr(2,V) }
  \]
  we have that $q_1([x]) \subset q_2([x])$, i.e., that the diagram:
  \[
  \xymatrix{ 
    & \PP\big(\Omega^2_{\PP(V)}(2)\big) \ar[dl]_{q_1} \ar[rd]^{q_2} \\
    \PP(V) && \Gr(2,V)}
  \]
  coincides with the diagram~\eqref{eq:flag_diagram}.  This identifies $\PP\big(\Omega^2_{\PP(V)}(2)\big)$ with the partial flag manifold $\Fl_{1,2}(V)$, and exhibits $\PP(\cE^\vee)$ as an element of the linear system $|L|$ as claimed.
\end{proof}

\subsection*{Abelianization:}

To compute the quantum period, we use the Abelian/non-Abelian Correspondence of Ciocan-Fontanine--Kim--Sabbah, as in \cite[\S39]{QC105}.  Consider the situation as in \S3.1 of \cite{Ciocan-Fontanine--Kim--Sabbah} with:
\begin{itemize}
\item $X = \CC^{10}$, regarded as the space of pairs:
  \[
  \{(v,w) : \text{$v \in \CC^2$ is a row vector, $w$ is a $2 \times 4$ complex matrix}\}
  \]
\item $G = \Cstar \times \GL_2(\CC)$, acting on $X$ as:
  \[
  (\lambda, g) \colon (v, w) \mapsto (\lambda v g^{-1}, gw)
  \]
\item $T = (\Cstar)^3$, the diagonal subtorus in $G$;
\item the group that is denoted by $S$ in \cite{Ciocan-Fontanine--Kim--Sabbah} set equal to the trivial group;
\item $\cV$ equal to the representation of $G$ given by the determinant of the standard  representation of the second factor $\GL_2(\CC)$.
\end{itemize}
Then $X \GIT G$ is the partial flag manifold $\Fl=\Fl_{1,2}(\CC^4)$, whereas $X \GIT T$ is the toric variety with weight data:
\[
\begin{array}{rrrrrrrrrrl} 
1 & 1 & 1  & 1 & 0 & 0 & 0 & 0 & -1 & 0  & \hspace{1.5ex}  L_1\\ 
0 & 0 & 0  & 0 & 1 & 1 & 1 & 1 & 0   & -1& \hspace{1.5ex} L_2 \\
0 & 0 & 0  & 0 & 0 & 0 & 0 & 0 & 1   & 1  & \hspace{1.5ex} H
\end{array}
\]
and $\Amp =\langle L_1, L_2 , H \rangle$; that is, $X \GIT T$ is the projective bundle $\PP(\cO(-1,0)\oplus \cO(0,-1))$ over $\PP^3\times \PP^3$. The non-trivial element of the Weyl group $W=\ZZ/2\ZZ$ exchanges the two factors of $\PP^3\times \PP^3$. The representation $\cV$ induces the line bundle $\cV_G = L$ over $X \GIT G = \Fl$, where $L$ was defined in the statement of Proposition~\ref{pro:better_model}, whereas the representation $\cV$ induces the line bundle $\cV_T = L_1+L_2$ over $X \GIT T$.

\subsection*{The Abelian/non-Abelian Correspondence:}

Let $p_1$, $p_2$, and $p_3 \in H^2(X\GIT T;\QQ)$ denote the first Chern classes of the line bundles $L_1$, $L_2$, and $H$ respectively. We fix a lift of $H^\bullet(X \GIT G;\QQ)$ to $H^\bullet(X \GIT T,\QQ)^W$ in the sense of \cite[\S3]{Ciocan-Fontanine--Kim--Sabbah}; there are many possible choices for such a lift, and the precise choice made will be unimportant in what follows.  The lift allows us to regard $H^\bullet(X \GIT G;\QQ)$ as a subspace of $H^\bullet(X \GIT T,\QQ)^W$, which maps isomorphically to the Weyl-anti-invariant part $H^\bullet(X \GIT T,\QQ)^a$ of $H^\bullet(X \GIT T,\QQ)$ via:
\[
\xymatrix{
  H^\bullet(X \GIT T,\QQ)^W \ar[rr]^{\cup(p_2-p_1)} &&
  H^\bullet(X \GIT T,\QQ)^a}
\]
We compute the quantum period of $\MW{4}{11} \subset X \GIT G$ by computing the $J$-function of $\Fl = X \GIT G$ twisted, in the sense of \cite{Coates--Givental}, by the Euler class and the bundle $\cV_G$, using the Abelian/non-Abelian Correspondence.

Our first step is to compute the $J$-function of $X \GIT T$ twisted by the Euler class and the bundle $\cV_T$.  As in
\cite[\S D.1]{QC105} and as in \cite{Ciocan-Fontanine--Kim--Sabbah}, consider the bundles $\cV_T$ and $\cV_G$ equipped with the canonical $\Cstar$-action that rotates fibers and acts trivially on the base.  Recall the definition of the twisted $J$-function $J_{\be,\cV_T}$ of $X \GIT T$ from \cite[\S D.1]{QC105}.  We will compute $J_{\be,\cV_T}$ using the Quantum Lefschetz theorem; $J_{\be,\cV_T}$ is the restriction to the locus $\tau \in H^0(X \GIT T) \oplus H^2(X \GIT T)$ of what was denoted by $J^{S \times \Cstar}_{\cV_T}(\tau)$ in \cite{Ciocan-Fontanine--Kim--Sabbah}.  The toric variety $X \GIT T$ is Fano, so Theorem~C.1 in \cite{QC105} gives:
\[
J_{X \GIT T}(\tau) = e^{\tau/z} \sum_{l, m, n \geq 0} {
  Q_1^l Q_2^m Q_3^n 
  e^{l \tau_1} e^{m \tau_2} e^{m \tau_3}
  \over
  \prod_{k=1}^{k=l} (p_1 + k z)^4
  \prod_{k=1}^{k=m} (p_2 + k z)^4
}
{
\prod_{k =-\infty}^{k=0} p_3-p_1 + k z 
\over
\prod_{k=-\infty}^{k=n-l} p_3-p_1 + k z 
}
{
\prod_{k = -\infty}^{k=0} p_3-p_2 + k z 
\over
\prod_{k=-\infty}^{k= n-m} p_3-p_2 + k z 
}
\]
where $\tau = \tau_1 p_1 + \tau_2 p_2 + \tau_3 p_3$ and we have identified the group ring $\QQ[H_2(X \GIT T;\ZZ)]$ with $\QQ[Q_1,Q_2,Q_3]$ via the $\QQ$-linear map that sends $Q^\beta$ to $Q_1^{\langle \beta, p_1 \rangle} Q_2^{\langle \beta, p_2\rangle} Q_3^{\langle \beta, p_3 \rangle}$. The line bundles $L_1$, $L_2$, and $H$ are nef, and $c_1(X \GIT T) - c_1(\cV_T)$ is ample, so Theorem~D.3 in \cite{QC105} gives:
\begin{multline*}
  J_{\be,\cV_T}(\tau) = 
  e^{\tau/z}
  \sum_{l=0}^\infty   \sum_{m=0}^\infty   \sum_{n=0}^\infty
  Q_1^l Q_2^m Q_3^n e^{l \tau_1} e^{m \tau_2} e^{m \tau_3}
  {
    \prod_{k=1}^{k=l+m} (\lambda + p_1 + p_2 + k z)
    \over
    \prod_{k=1}^{k=l} (p_1 + k z)^4
    \prod_{k=1}^{k=m} (p_2 + k z)^4
  } \times \\
  {
    \prod_{k =-\infty}^{k=0} p_3-p_1 + k z 
    \over
    \prod_{k=-\infty}^{k=n-l} p_3-p_1 + k z 
  }
  {
    \prod_{k = -\infty}^{k=0} p_3-p_2 + k z 
    \over
    \prod_{k=-\infty}^{k= n-m} p_3-p_2 + k z 
  }
\end{multline*}

Consider now $\Fl = X \GIT G$ and a point $t \in H^\bullet(\Fl)$.  Recall that $\Fl=\PP(S)$ is the projectivization of the universal bundle $S$ of subspaces on $\Gr := \Gr(2,4)$. Let $\epsilon_1 \in H^2(\Fl;\QQ)$ be the pullback to $\Fl$ (under the projection map $p_2\colon\Fl\to \Gr$) of the ample generator of $H^2(\Gr)$, and let $\epsilon_2 \in H^2(\Fl;\QQ)$ be the first Chern class of $\cO_{\PP(S)}(1)$.  Identify the group ring $\QQ[H_2(\Fl;\ZZ)]$ with $\QQ[q_1,q_2]$ via the $\QQ$-linear map which sends $Q^\beta$ to $q_1^{\langle \beta,\epsilon_1 \rangle} q_2^{\langle \beta,\epsilon_2 \rangle}$.  In \cite[\S 6.1]{Ciocan-Fontanine--Kim--Sabbah} the authors consider the lift $\tilde{J}^{S \times \Cstar}_{\cV_G}(t)$ of their twisted $J$-function $J^{S \times \Cstar}_{\cV_G}(t)$ determined by a choice of lift $H^\bullet(X \GIT G;\QQ) \to H^\bullet(X \GIT T,\QQ)^W$.  We restrict to the locus $t \in H^0(X \GIT G;\QQ) \oplus H^2(X \GIT G;\QQ)$, considering the lift: 
\begin{align*}
  \tilde{J}_{\be,\cV_G}(t) := \tilde{J}^{S \times \Cstar}_{\cV_G}(t)   && t \in H^0(X \GIT G;\QQ) \oplus H^2(X \GIT G;\QQ)
\end{align*}
of our twisted $J$-function $J_{\be,\cV_G}$ determined by our choice of lift $H^\bullet(X \GIT G;\QQ) \to H^\bullet(X \GIT T,\QQ)^W$. Theorems~4.1.1 and~6.1.2 in \cite{Ciocan-Fontanine--Kim--Sabbah} imply that:
\[
\tilde{J}_{\be,\cV_G}\big(\varphi(t)\big) \cup (p_2 - p_1) = \Big[ 
\textstyle 
\big(z {\partial \over \partial \tau_2} 
- 
z {\partial \over \partial \tau_1} 
\big) J_{\be,\cV_T}(\tau) \Big]_{\tau=t, Q_1 = Q_2 = -q_1, Q_3=q_2}
\]
for some function $\varphi:H^2(X \GIT G;\QQ) \to H^\bullet(X \GIT G; \Lambda_G)$. Setting $t = 0$ gives:
\begin{multline*}
  \tilde{J}_{\be,\cV_G}\big(\varphi(0)\big) \cup (p_2 - p_1) = 
  \sum_{l=0}^\infty   \sum_{m=0}^\infty   \sum_{n=0}^\infty
  (-1)^{l+m} q_1^{l+m} q_2^n 
  {
    \prod_{k=1}^{k=l+m} (\lambda + p_1 + p_2 + k z)
    \over
    \prod_{k=1}^{k=l} (p_1 + k z)^4
    \prod_{k=1}^{k=m} (p_2 + k z)^4
  } \times \\
  {
    \prod_{k =-\infty}^{k=0} p_3-p_1 + k z 
    \over
    \prod_{k=-\infty}^{k=n-l} p_3-p_1 + k z 
  }
  {
    \prod_{k = -\infty}^{k=0} p_3-p_2 + k z 
    \over
    \prod_{k=-\infty}^{k= n-m} p_3-p_2 + k z 
  }
  \big(p_2-p_1 + (m-l) z \big)
\end{multline*}
For symmetry reasons the right-hand side here is divisible by $p_2-p_1$; it takes the form:
\[
(p_2-p_1)\Big(1 + O(z^{-2})\Big)
\]
whereas:
\[
\tilde{J}_{\be,\cV_G}\big(\varphi(0)\big) \cup (p_2 - p_1) = 
(p_2-p_1) \Big(1 + \varphi(0) z^{-1} + O(z^{-2}) \Big)
\]
We conclude that $\varphi(0) = 0$.  Thus:
\begin{multline}
  \label{eq:almost_there}
  \tilde{J}_{\be,\cV_G}(0) \cup (p_2 - p_1) = 
  \sum_{l=0}^\infty   \sum_{m=0}^\infty   \sum_{n=0}^\infty
  (-1)^{l+m} q_1^{l+m} q_2^n 
  {
    \prod_{k=1}^{k=l+m} (\lambda + p_1 + p_2 + k z)
    \over
    \prod_{k=1}^{k=l} (p_1 + k z)^4
    \prod_{k=1}^{k=m} (p_2 + k z)^4
  } \times \\
  {
    \prod_{k =-\infty}^{k=0} p_3-p_1 + k z 
    \over
    \prod_{k=-\infty}^{k=n-l} p_3-p_1 + k z 
  }
  {
    \prod_{k = -\infty}^{k=0} p_3-p_2 + k z 
    \over
    \prod_{k=-\infty}^{k= n-m} p_3-p_2 + k z 
  }
  \big(p_2-p_1 + (m-l) z \big)
\end{multline}

To extract the quantum period $G_{\MW{4}{11}}$ from the twisted $J$-function $J_{\be,\cV_G}(0)$, we proceed as in \cite[Example~D.8]{QC105}: we take the non-equivariant limit, extract the component along the unit class $1 \in H^\bullet(X \GIT G;\QQ)$, set $z=1$, and set $Q^\beta = t^{\langle \beta, {-K} \rangle}$ where $K = K_{\MW{4}{11}}$.  Thus we consider the right-hand side of \eqref{eq:almost_there}, take the non-equivariant limit, extract the coefficient of $p_2-p_1$, set $z=1$, and set $q_1 = q_2 =2t$, obtaining: 
\begin{multline*}
  G_{\MW{4}{11}}(t) = 
  \sum_{l=0}^\infty \sum_{m=0}^\infty \sum_{n=\max(l,m)}^\infty 
  (-1)^{l+m}
  t^{2l+2m+2n}
  \frac
  {
    (l+m)!
  }
  {
    (l!)^4 (m!)^4 (n-l)! (n-m)!
  }
\Big(1 + (m-l)(H_{n-m}-4H_m)\Big)
  \\
  +
\sum_{l=0}^\infty \sum_{m=l+1}^\infty \sum_{n=l}^{m-1}
  (-1)^{l+n}
  t^{2l+2m+2n}
  \frac
  {
    (l+m)!
    (m-n-1)!
  }
  {
    (l!)^4 (m!)^4 (n-l)!
  }
  (m-l)
\end{multline*}

\begin{remark}
  The quantum period of $\PP\big(\Omega^2_{\PP(V)}(2)\big)$ can also be computed using Strangeway's reconstruction theorem for the quantum cohomology of Fano bundles~\cite[Theorem~1]{Strangeway}.  Thus the quantum period of $\MW{4}{11}$ can be derived from this result together with the Quantum Lefschetz theorem.  The Gromov--Witten invariants required as input to the reconstruction theorem can be computed via \cite[Lemma~1]{Strangeway}, using Schubert calculus on $\Gr(2,4)$ and intersection numbers in $\PP^3$.
\end{remark}

\subsubsection{$\MW{4}{12}$} \hfill [\hyperref[table:index_2]{regularized quantum period p.~\pageref*{table:index_2}}, \hyperref[operator:MW^4_12]{operator p.~\pageref*{operator:MW^4_12}}] \label{sec:MW^4_12}

This is the blow-up of the quadric $Q^4$ along a line.  Consider the toric variety $F$ with weight data:
\[ 
\begin{array}{rrrrrrrl} 
  \multicolumn{1}{c}{s_0} & 
  \multicolumn{1}{c}{s_1} & 
  \multicolumn{1}{c}{s_2} & 
  \multicolumn{1}{c}{s_3} & 
  \multicolumn{1}{c}{x} & 
  \multicolumn{1}{c}{x_4} & 
  \multicolumn{1}{c}{x_5} & \\ 
  \cmidrule{1-7}
  1 & 1 & 1 & 1 & -1 & 0 & 0 & \hspace{1.5ex} L\\ 
  0 & 0 & 0 & 0 & 1  & 1 & 1 & \hspace{1.5ex} M \\
\end{array}
\]
and $\Amp F = \langle L, M\rangle$.  The morphism $F \to \PP^5$ that sends (contravariantly) the homogeneous co-ordinate functions $[x_0,x_1,\dots,x_5]$ to $[xs_0, xs_1, x s_2, x s_3, x_4, x_5]$ blows up the line $(x_0=x_1=x_2=x_3=0)$ in $\PP^5$, and $\MW{4}{12}$ is the proper transform of a quadric containing this line.  Thus $\MW{4}{12}$ is a member of $|L+M|$ in the toric variety $F$.  We have:
\begin{itemize}
\item $-K_F=3L+3M$ is ample, so that $F$ is a Fano variety;
\item $\MW{4}{12}\sim L+M$ is ample;
\item $-(K_F+L+M)\sim 2L+2M$ is ample.
\end{itemize}
Theorem~\ref{thm:toric_ci_mirror} yields:
\[
G_{\MW{4}{12}}(t) = \sum_{l=0}^\infty \sum_{m=l}^\infty \frac{(l+m)!}{(l!)^4(m-l)!(m!)^2} t^{2l+2m}
\]

\subsubsection{$\MW{4}{13}$} \hfill [\hyperref[table:index_2]{regularized quantum period p.~\pageref*{table:index_2}}, \hyperref[operator:MW^4_13]{operator p.~\pageref*{operator:MW^4_13}}] \label{sec:MW^4_13}  

This is the projective bundle $\PP_{Q^3}\big(\cO(1) \oplus \cO\big)$ or, equivalently, a member of $|2L|$ in the toric variety $F$ with weight data:
\[
\begin{array}{rrrrrrrl} 
  \multicolumn{1}{c}{x_0} & 
  \multicolumn{1}{c}{x_1} & 
  \multicolumn{1}{c}{x_2} & 
  \multicolumn{1}{c}{x_3} & 
  \multicolumn{1}{c}{x_4} & 
  \multicolumn{1}{c}{u} & 
  \multicolumn{1}{c}{v} & \\ 
  \cmidrule{1-7}
  1 & 1 & 1 & 1 & 1 & 0 & -1 & \hspace{1.5ex} L\\ 
  0 & 0 & 0 & 0 & 0 & 1 & 1 & \hspace{1.5ex} M \\
\end{array}
\]
and $\Amp F = \langle L, M\rangle$.  We have:
\begin{itemize}
\item $-K_F=4L+2M$ is ample, that is $F$ is a Fano variety;
\item $\MW{4}{13} \sim 2L$ is nef;
\item $-(K_F+2L)\sim 2L+2M$ is ample.
\end{itemize}
The projection $[x_0:x_1:x_2:x_3:x_4:x_5:u:v] \mapsto [x_0:x_1:x_2:x_3:x_4:x_5]$ exhibits $F$ as the scroll $\PP_{\PP^4}\big(\cO(1) \oplus \cO\big)$ over $\PP^4$, and passing to a member of $|2L|$ restricts this scroll to $Q^3 \subset \PP^4$.  Theorem~\ref{thm:toric_ci_mirror} yields:
\[
G_{\MW{4}{13}}(t) = \sum_{l=0}^\infty \sum_{m=l}^\infty \frac{(2l)!}{(l!)^5 m! (m-l)!} t^{2l+2m}
\]

\subsubsection{$\MW{4}{14}$} \hfill [\hyperref[table:index_2]{regularized quantum period p.~\pageref*{table:index_2}}, \hyperref[operator:MW^4_14]{operator p.~\pageref*{operator:MW^4_14}}] \label{sec:MW^4_14}

This is the product $\PP^1 \times \PP^3$.  Combining Theorem~\ref{thm:products} with \cite[Example~G.1]{QC105} and \cite[\S1]{QC105} yields:
\[
G_{\MW{4}{14}}(t) = \sum_{l=0}^\infty \sum_{m=0}^\infty \frac{t^{2l+4m}}{(l!)^2 (m!)^4}
\]

\subsubsection{$\MW{4}{15}$} \hfill [\hyperref[table:index_2]{regularized quantum period p.~\pageref*{table:index_2}}, \hyperref[operator:MW^4_15]{operator p.~\pageref*{operator:MW^4_15}}] \label{sec:MW^4_15}  

This is the projective bundle $\PP_{\PP^3}\big(\cO(1) \oplus \cO(-1)\big)$, or in other words, the toric variety with weight data:
\[
\begin{array}{rrrrrrl} 
  \multicolumn{1}{c}{x_0} & 
  \multicolumn{1}{c}{x_1} & 
  \multicolumn{1}{c}{x_2} & 
  \multicolumn{1}{c}{x_3} & 
  \multicolumn{1}{c}{u} & 
  \multicolumn{1}{c}{v} \\ 
  \cmidrule{1-6}
  1 & 1 & 1 & 1 & 0 & -2 & \hspace{1.5ex} L\\ 
  0 & 0 & 0 & 0 & 1 & 1 & \hspace{1.5ex} M\\ 
\end{array}
\]
and $\Amp F = \langle L, M\rangle$.  Theorem~\ref{thm:toric_mirror} yields:
\[
G_{\MW{4}{15}}(t) = \sum_{l=0}^\infty \sum_{m=2l}^\infty \frac{t^{2l+2m}}{(l!)^4 m! (m-2l)!} 
\]

\subsubsection{$\MW{4}{16}$} \hfill [\hyperref[table:index_2]{regularized quantum period p.~\pageref*{table:index_2}}, \hyperref[operator:MW^4_16]{operator p.~\pageref*{operator:MW^4_16}}] \label{sec:MW^4_16}

This is the product $\PP^1 \times W^3$, where $W^3 \subset \PP^2 \times \PP^2$ is a divisor of bidegree $(1,1)$.  Theorem~\ref{thm:toric_ci_mirror} yields:
\[
G_{\MW{4}{16}}(t) = \sum_{l=0}^\infty \sum_{m=0}^\infty \sum_{n=0}^\infty \frac{(m+n)!}{(l!)^2 (m!)^3 (n!)^3} t^{2l+2m+2n}
\]

\subsubsection{$\MW{4}{17}$} \hfill [\hyperref[table:index_2]{regularized quantum period p.~\pageref*{table:index_2}}, \hyperref[operator:MW^4_17]{operator p.~\pageref*{operator:MW^4_17}}] \label{sec:MW^4_17}

This is the product $\PP^1 \times B^3_7$, where $B^3_7$ is the blow-up of $\PP^3$ at a point.  Note that $B^3_7$ is the projective bundle $\PP_{\PP^2}(\cO \oplus \cO(-1))$.  It follows that $\MW{4}{17}$ is the toric variety with weight data:
\[
\begin{array}{rrrrrrrl} 
  \multicolumn{1}{c}{x_0} & 
  \multicolumn{1}{c}{x_1} & 
  \multicolumn{1}{c}{y_0} & 
  \multicolumn{1}{c}{y_1} & 
  \multicolumn{1}{c}{y_2} & 
  \multicolumn{1}{c}{u} &
  \multicolumn{1}{c}{v} \\ 
  \cmidrule{1-7}
  1 & 1 & 0 & 0 & 0 & 0 & 0 & \hspace{1.5ex} L\\ 
  0 & 0 & 1 & 1 & 1 & 0 & -1 & \hspace{1.5ex} M\\ 
  0 & 0 & 0 & 0 & 0 & 1 & 1 & \hspace{1.5ex} N\\ 
\end{array}
\]
and $\Amp F = \langle L, M, N\rangle$.  Theorem~\ref{thm:toric_mirror} yields:
\[
G_{\MW{4}{17}}(t) = \sum_{l=0}^\infty \sum_{m=0}^\infty \sum_{n=m}^\infty \frac{t^{2l+2m+2n}}{(l!)^2 (m!)^3 n! (n-m)!}
\]

\subsubsection{$\MW{4}{18}$} \hfill [\hyperref[table:index_2]{regularized quantum period p.~\pageref*{table:index_2}}, \hyperref[operator:MW^4_18]{operator p.~\pageref*{operator:MW^4_18}}] \label{sec:MW^4_18}

This is the product $\PP^1 \times \PP^1 \times \PP^1 \times \PP^1$.  Combining Theorem~\ref{thm:products} with \cite[Example~G.1]{QC105} yields:
\[
G_{\MW{4}{18}}(t) = \sum_{k=0}^\infty \sum_{l=0}^\infty \sum_{m=0}^\infty \sum_{n=0}^\infty \frac{t^{2k+2l+2m+2n}}{(k!)^2(l!)^2(m!)^2(n!)^2}
\]

\section{Four-Dimensional Fano Toric Manifolds}
\label{sec:toric}

Four-dimensional Fano toric manifolds were classified by Batyrev~\cite{Batyrev} and Sato~\cite{Sato}.  {\O}bro classified Fano toric manifolds in dimensions 2--8~\cite{Obro} and, to standardise notation, we will write $\Obro{4}{k}$ for the $k$th four-dimensional Fano toric manifold in {\O}bro's list.  $\Obro{4}{k}$ is the $(23+k)$th Fano toric manifold in the Graded Ring Database~\cite{GRDB}, as the list there is the concatenation of {\O}bro's lists in dimensions 2--8. We can compute the quantum periods of the $\Obro{4}{k}$ using Theorem~\ref{thm:toric_mirror};  the first few Taylor coefficients of their regularized quantum periods can be found in the tables in the Appendix.  

\section{Product Manifolds and Other Index $1$ Examples}
\label{sec:other_index_1}

Quantum periods for one-, two- and three-dimensional Fano manifolds were computed in~\cite{QC105}.  Combining these results with Theorem~\ref{thm:products} allows us to compute the quantum period of any four-dimensional Fano manifold that is a product of lower-dimensional manifolds.  Many of these examples have Fano index $r=1$.

In his thesis~\cite{Strangeway:thesis}, Strangeway determined the quantum periods of two four-dimensional Fano manifolds of index $r=1$ that have not yet been discussed.  These manifolds arise as complete intersections in the $9$-dimensional projective bundle $F = \PP\big(\Omega^2_{\PP^4}(2) \big)$.  Let $\pi \colon F \to \PP^4$ denote the canonical projection, let $p \in H^2(F)$ be the first Chern class of $\pi^\star \cO_{\PP^4}(1)$, and let $\xi \in H^2(F)$ be the first Chern class of the tautological bundle $\cO_F(1)$.
The manifold $F$ is Fano of Picard rank~$2$, with nef cone generated by $\{\xi,p\}$ and ${-K_F} = 6\xi + 2p$.  Let:
\begin{align*}
  & \text{$\Str_1 \subset F$ denote a complete intersection of five divisors of type $\xi$} \\
  & \text{$\Str_2 \subset F$ denote a complete intersection of four divisors of type $\xi$ and a divisor of type $p$} \\
  \intertext{We consider also:}
  & \text{$\Str_3 \subset F$, a complete intersection of four divisors of type $\xi$ and a divisor of type $\xi + p$} 
\end{align*}
which was unaccountably omitted from \cite{Strangeway:thesis}.  

The manifolds $\Str_k$, $k \in \{1,2,3\}$, each have Picard rank two.  To see this, observe that the ambient manifold $F$ is the blow-up of $\PP^9$ along $\Gr(2,5)$, where $\Gr(2,5) \to \PP^9$ is the Pl\"ucker embedding~\cite{Strangeway}; the blow-up $F \to \PP^9$ and the projection $\pi \colon F \to \PP^4$ are the extremal contractions corresponding to the two extremal rays in $\MC(F)$.  Thus $\Str_1$ is the blow-up of $\PP^4$ along an elliptic curve $E_5 \subset \PP^4$ of degree~$5$.  Consider the five-dimensional Fano manifold $F_5$ given by the complete intersection of four divisors of type $\xi$ in $F$.  Then $F_5$ is the blow-up of $\PP^5$ along a del~Pezzo surface $S_5$ of degree~$5$; in particular, $F_5$ has Picard rank two.  $\Str_3$ is an ample divisor (of type $\xi + p$) in $F_5$, so the Picard rank of $\Str_3$ is also two.  The manifold $\Str_2$ is a divisor in $F_5$ of type~$p$, and $F_5$ arises as the closure of the graph of the map $\PP^5 \to \PP^4$ given by the $5$-dimensional linear system of quadrics passing through $S_5$. This exhibits $\Str_2$ as the blow-up of a smooth four-dimensional quadric $Q^4$ along $S_5$, which implies that the Picard rank of $\Str_2$ is two.

We can compute the quantum periods of $\Str_k$, $k \in \{1,2,3\}$, by observing that a complete intersection in $F$ of five divisors of type~$\xi$ and one divisor of type~$p$ is a three-dimensional Fano manifold~$\MM{2}{17}$, `unsectioning' to compute the quantum period for $F$, and then applying the quantum Lefschetz theorem to compute the quantum periods for $\Str_1$,~$\Str_2$, and~$\Str_3$.  Recall the definition of the $J$-function $J_X(t,z)$ from \cite[equation~11]{Coates--Givental}.  The identity component of the $J$-function of $\MM{2}{17}$ is:
\begin{equation}
  \label{eq:equate_me_1}
  e^{-q_1 - q_2} 
  \sum_{l_1,l_2,l_3\geq 0}
  (-q_1)^{l_1+l_2} q_2^{l_3}
  \frac
  {
    (l_1+l_2)! (l_1+l_3)!(l_2+l_3)! (l_1+l_2+l_3)!
  }
  {
    (l_1!)^4 (l_2!)^4 (l_3!)^4 z^{l_1+l_2+l_3}
  }
  \Big(1 + (l_2-l_1)(H_{l_2+l_3} - 4H_{l_2})\Big)
\end{equation}
where $q_1$,~$q_2$ are generators of the Novikov ring for $\MM{2}{17}$ dual respectively to $\xi$ and $p$; see~\cite[\S 34]{QC105}.   The identity component of the $J$-function of $F$ takes the form:
\[
\sum_{l=0}^\infty \sum_{m=0}^\infty c_{l,m} z^{-6l-2m} q_1^l q_2^m
\]
for some coefficients $c_{l,m} \in \QQ$.  The Quantum Lefschetz theorem implies (cf.~\cite[\S D.1]{QC105}) that the identity component of the $J$-function of $\MM{2}{17}$ is equal to:
\begin{equation}
  \label{eq:equate_me_2}
  e^{-c_{1,0} q_1 -c_{0,1} q_2} \sum_{l=0}^\infty \sum_{m=0}^\infty (l!)^5 m! c_{l,m} z^{-l-m} q_1^l q_2^m
\end{equation}
and it is known that $c_{1,0} = 1$ and $c_{0,1} = 0$~\cite[\S5.1]{Strangeway}.  
Equating \eqref{eq:equate_me_1} and \eqref{eq:equate_me_2} determines the $c_{l,m}$:
\[
c_{l,m}=
\sum_{i=0}^{l} \sum_{j=0}^{m}
(-1)^{j+l}
\frac
{
  (m+l-i-j)!(i+m-j)! (m+l-j)!
}
{
  ((l-i)!)^4 (i!)^4 ((m-j)!)^4j!m!(l!)^4
}
\Big(1 + (2i-l)(H_{i+m-j} - 4H_{i})\Big)
\]
The Quantum Lefschetz theorem now gives that:
\begin{align*}
  & G_{\Str_1}(t) = e^{-t}
  \sum_{l=0}^\infty
  \sum_{m=0}^\infty
  (l!)^5 c_{l,m} t^{l+2m} \\
  & G_{\Str_2}(t) = 
  \sum_{l=0}^\infty
  \sum_{m=0}^\infty
  (l!)^4 m! \, c_{l,m} t^{2l+m} \\
  & G_{\Str_3}(t) = 
  e^{-t} \sum_{l=0}^\infty
  \sum_{m=0}^\infty
  (l!)^4 (l+m)! \, c_{l,m} t^{l+m} 
\end{align*}

\section{Numerical Calculations of Quantum Differential Operators}
\label{sec:qdes}

As discussed in \S\ref{sec:methodology}, the regularized quantum period $\hG_X(t)$ of a Fano manifold $X$ satisfies a differential equation:
\begin{align}
  \label{eq:regularized_qde_again}
  L_X \hG_X \equiv 0 && L_X = \sum_{k=0}^{k=N} p_k(t) D^k
\end{align}
called the regularized quantum differential equation.  Here the $p_m$ are polynomials and $D = t \frac{d}{dt}$.  The regularized quantum differential equation for $X$ coincides with the (unregularized) quantum differential equation for an anticanonical Calabi--Yau manifold $Y \subset X$; the study of the regularized quantum period from this point of view was pioneered by Batyrev--Ciocan-Fontanine--Kim--van~Straten~\cite{BCFKvS:1,BCFKvS:2}.  The differential equation \eqref{eq:regularized_qde_again} is expected to be Fuchsian, and the local system of solutions to $L_X f \equiv 0$ is expected to be of \emph{low ramification} in the following sense.

\begin{definition}[\!\!\protect{\cite{ProcECM}}]
  Let $S \subset \PP^1$ a finite set, and $\bbV \to \PP^1 \setminus S$ a local system.  Fix a basepoint $x \in \PP^1 \setminus S$.  For $s \in S$, choose a small loop that winds once anticlockwise around $s$ and connect it to $x$ via a path, thereby making a loop $\gamma_s$ about $s$ based at $x$.  Let $T_s \colon \bbV_x \to \bbV_x$ denote the monodromy of $\bbV$  along $\gamma_s$.  The \emph{ramification} of $\bbV$ is:
  \[
  \rf(\bbV) := \sum_{s \in S} \dim\Big(\bbV_x/{\bbV_x}^{\!\!\!T_s}\Big)
  \]
  The \emph{ramification defect} of $\bbV$ is the quantity $\rf(\bbV) - 2\rk(\bbV)$.  Non-trivial irreducible local systems $\bbV \to \PP^1 \setminus S$ have non-negative ramification defect; this gives a lower bound for the ramification of $\bbV$.  A local system of ramification defect zero is called \emph{extremal}.
\end{definition}

\begin{definition}
  The \emph{ramification} (respectively \emph{ramification defect}) of a differential operator $L_X$ is the ramification (respectively ramification defect) of the local system of solutions $L_X f \equiv 0$.
\end{definition}

\begin{definition}
  The \emph{quantum differential operator} for a Fano manifold $X$ is the operator $L_X \in \QQ[t]\langle D \rangle$ such that $L_X \hG_X \equiv 0$ which is of lowest order in $D$ and, among all such operators of this order, is of lowest degree in $t$.  (This defines $L_X$ only up to an overall scalar factor, but this suffices for our purposes.)
\end{definition}
Suppose that each of the polynomials $p_0,\ldots,p_N$ are of degree at most $r$, and write:
\begin{align*}
  L_X = \sum_{k=0}^{k=N} p_k(t) D^k && p_k(t) = \sum_{l=0}^r a_{kl} t^l
\end{align*}
The differential equation $L_X \hG_X \equiv 0$ gives a system of linear equations for the coefficients $a_{kl}$ which, given sufficiently many terms of the Taylor expansion of $\hG_X$, becomes over-determined.  Given \emph{a~priori} bounds on $N$ and $r$, therefore, we could compute the quantum differential operator $L_X$ by calculating sufficiently many terms in the Taylor expansion.  In general we do not have such bounds, but nonetheless by ensuring the linear system for $(a_{kl})$ is highly over-determined we can be reasonably confident that the operator $L_X$ which we compute is correct.  In addition, since $L_X$ is expected to correspond under mirror symmetry to a Picard--Fuchs differential equation for the mirror family, $L_X$ is expected to be of Fuchsian type.  This is an extremely delicate condition on the coefficients $(a_{kl})$, and it can be checked by exact computation.  

We computed candidate quantum differential operators $L_X$ for all four-dimensional Fano manifolds of Fano index $r>1$, and checked the Fuchsian condition in each case.  The operators $L_X$, together with their ramification defects and the log-monodromy data $\{\log T_s : s \in S\}$ in Jordan normal form, can be found in Appendix~\ref{appendix:qdes}.  In $24$ cases, the local system of solutions to the regularized quantum differential equation is extremal, and in the remaining $11$ cases it is of ramification defect~$1$.

To compute the ramification of $L_X$, we follow Kedlaya~\cite[\S7.3]{Kedlaya}.  This involves only linear algebra over a splitting field for $p_N(t)$---recall that every singular point of $L_X$ occurs at a root of $p_N(t)$---and thus can be implemented using exact (not numerical) computer algebra.  For this we use Steel's symbolic implementation of $\overline{\QQ}$ in the computational algebra system Magma~\cite{Magma,Steel}.  

\subsection*{Source Code} This paper is accompanied by full source code, written in Magma.  See the included file {\tt README.txt} for usage instructions.  The source code, but not the text of this paper, is released under a Creative Commons~CC0 license~\cite{CC0}: see the included file {\tt COPYING.txt} for details.    If you make use of the source code in an academic or commercial context, you should acknowledge this by including a reference or citation to this paper.

\subsection*{Acknowledgements}

We thank Alessio Corti for a number of very useful conversations.    This research was supported by a Royal Society University Research Fellowship (TC); ERC Starting Investigator Grant number~240123; the Leverhulme Trust; grant MK-1297.2014.1; AG Laboratory NRU-HSE, RF government grant ag.~11.G34.31.0023; Grant of Leading Scientific Schools (N.Sh.~2998.2014.1); and EPSRC grant EP/I008128/1. 

\pagebreak

\appendix

\section{Regularized Quantum Period Sequences}
\label{appendix:periods}

In this Appendix we record the description, degree, and Picard rank~$\rho_X$ for each of the four-dimensional Fano manifolds $X$ considered in this paper, together with the first few Taylor coefficients $\alpha_d$ of the regularized quantum period:
\[
\hG_X(t) = \sum_{d=0}^\infty \alpha_d t^d
\]
The tables are divided by Fano index $r$.  We include only coefficients $\alpha_d$ with $d \equiv 0 \bmod r$, since coefficients $\alpha_d$ with $d \not \equiv 0 \bmod r$ are zero.   Notation is as follows:
\begin{itemize}
\item $\PP^n$ denotes $n$-dimensional complex projective space;
\item $Q^n$ denotes a quadric hypersurface in $\PP^{n+1}$;
\item $\FI{4}{k}$ is as in \S\ref{sec:index_3} above;
\item $\VV{4}{k}$ is as in \S\ref{sec:index_2_rank_1} above;
\item $\MW{4}{k}$ is as in \S\ref{sec:index_2_higher_rank} above;
\item $\Obro{4}{k}$ is as in \S\ref{sec:toric} above;
\item $\Str_{k}$ is as in \S\ref{sec:other_index_1} above;
\item $\SS_k$ denotes the del~Pezzo surface of degree~$k$;
\item $F_1$ denotes the Hirzebruch surface $\PP\big(\cO_{\PP^1}(-1) \oplus \cO_{\PP^1}\big)$;
\item $\VV{3}{k}$ denotes the three-dimensional Fano manifold of Picard rank~$1$, Fano index~$1$, and degree~$k$;
\item $\BB{3}{k}$ denotes the three-dimensional Fano manifold of Picard rank~$1$, Fano index~$2$, and degree~$8k$;
\item $\MM{\rho}{k}$ denotes the $k$th entry in the Mori--Mukai list of three-dimensional Fano manifolds of Picard rank~$\rho$~\cite{Mori--Mukai:Manuscripta,Mori--Mukai:81,Mori--Mukai:84,Mori--Mukai:erratum,Mori--Mukai:fanoconf}.  We use the the ordering as in~\cite{QC105}, which agrees with the original papers of Mori--Mukai except when $\rho=4$.
\end{itemize}
We prefer to express manifolds as products of lower-dimensional manifolds where possible, so for example $\Obro{4}{122}$ is the product $\PP^1 \times \PP^3$, but we refer to this space as $\PP^1 \times \PP^3$ rather than $\Obro{4}{122}$.   The tables for Fano index~$r$ with $r \in \{2,3,4,5\}$ are complete.  The table for $r=1$ is very far from complete.

\begin{longtable}{cccccccc}
\caption{Four-dimensional Fano manifolds with Fano index $r=5$}
\label{table:index_5}\\
\toprule
\multicolumn{1}{c}{$X$}&\multicolumn{1}{c}{$(-K_X)^4$}&\multicolumn{1}{c}{$\rho_X$}&\multicolumn{1}{c}{$\alpha_{0}$}&\multicolumn{1}{c}{$\alpha_{5}$}&\multicolumn{1}{c}{$\alpha_{10}$}&\multicolumn{1}{c}{$\alpha_{15}$}&\multicolumn{1}{c}{$\alpha_{20}$}\\
\midrule
\endfirsthead
\multicolumn{8}{l}{\vspace{-0.7em}\tiny Continued from previous page.}\\
\addlinespace[1.7ex]
\midrule
\multicolumn{1}{c}{$X$}&\multicolumn{1}{c}{$(-K_X)^4$}&\multicolumn{1}{c}{$\rho_X$}&\multicolumn{1}{c}{$\alpha_{0}$}&\multicolumn{1}{c}{$\alpha_{5}$}&\multicolumn{1}{c}{$\alpha_{10}$}&\multicolumn{1}{c}{$\alpha_{15}$}&\multicolumn{1}{c}{$\alpha_{20}$}\\
\midrule
\endhead
\midrule
\multicolumn{8}{r}{\raisebox{0.2em}{\tiny Continued on next page.}}\\
\endfoot
\bottomrule
\endlastfoot
\oddrow \hyperref[sec:index_5]{$\PP^4$}&$625$&$1$&$1$&$120$&$113400$&$168168000$&$305540235000$\\
\addlinespace[1.1ex]
\end{longtable}

\begin{longtable}{cccccccc}
\caption{Four-dimensional Fano manifolds with Fano index $r=4$}
\label{table:index_4}\\
\toprule
\multicolumn{1}{c}{$X$}&\multicolumn{1}{c}{$(-K_X)^4$}&\multicolumn{1}{c}{$\rho_X$}&\multicolumn{1}{c}{$\alpha_{0}$}&\multicolumn{1}{c}{$\alpha_{4}$}&\multicolumn{1}{c}{$\alpha_{8}$}&\multicolumn{1}{c}{$\alpha_{12}$}&\multicolumn{1}{c}{$\alpha_{16}$}\\
\midrule
\endfirsthead
\multicolumn{8}{l}{\vspace{-0.7em}\tiny Continued from previous page.}\\
\addlinespace[1.7ex]
\midrule
\multicolumn{1}{c}{$X$}&\multicolumn{1}{c}{$(-K_X)^4$}&\multicolumn{1}{c}{$\rho_X$}&\multicolumn{1}{c}{$\alpha_{0}$}&\multicolumn{1}{c}{$\alpha_{4}$}&\multicolumn{1}{c}{$\alpha_{8}$}&\multicolumn{1}{c}{$\alpha_{12}$}&\multicolumn{1}{c}{$\alpha_{16}$}\\
\midrule
\endhead
\midrule
\multicolumn{8}{r}{\raisebox{0.2em}{\tiny Continued on next page.}}\\
\endfoot
\bottomrule
\endlastfoot
\oddrow \hyperref[sec:index_4]{$Q^4$}&$512$&$1$&$1$&$48$&$15120$&$7392000$&$4414410000$\\
\addlinespace[1.1ex]
\end{longtable}

\begin{longtable}{cccccccc}
\caption{Four-dimensional Fano manifolds with Fano index $r=3$}
\label{table:index_3}\\
\toprule
\multicolumn{1}{c}{$X$}&\multicolumn{1}{c}{$(-K_X)^4$}&\multicolumn{1}{c}{$\rho_X$}&\multicolumn{1}{c}{$\alpha_{0}$}&\multicolumn{1}{c}{$\alpha_{3}$}&\multicolumn{1}{c}{$\alpha_{6}$}&\multicolumn{1}{c}{$\alpha_{9}$}&\multicolumn{1}{c}{$\alpha_{12}$}\\
\midrule
\endfirsthead
\multicolumn{8}{l}{\vspace{-0.7em}\tiny Continued from previous page.}\\
\addlinespace[1.7ex]
\midrule
\multicolumn{1}{c}{$X$}&\multicolumn{1}{c}{$(-K_X)^4$}&\multicolumn{1}{c}{$\rho_X$}&\multicolumn{1}{c}{$\alpha_{0}$}&\multicolumn{1}{c}{$\alpha_{3}$}&\multicolumn{1}{c}{$\alpha_{6}$}&\multicolumn{1}{c}{$\alpha_{9}$}&\multicolumn{1}{c}{$\alpha_{12}$}\\
\midrule
\endhead
\midrule
\multicolumn{8}{r}{\raisebox{0.2em}{\tiny Continued on next page.}}\\
\endfoot
\bottomrule
\endlastfoot
\oddrow \hyperref[sec:index_3]{$\PP^2\times \PP^2$}&$486$&$2$&$1$&$12$&$900$&$94080$&$11988900$\\
\evnrow \hyperref[sec:index_3]{$\FI{4}{5}$}&$405$&$1$&$1$&$18$&$1710$&$246960$&$43347150$\\
\oddrow \hyperref[sec:index_3]{$\FI{4}{4}$}&$324$&$1$&$1$&$24$&$3240$&$672000$&$169785000$\\
\evnrow \hyperref[sec:index_3]{$\FI{4}{3}$}&$243$&$1$&$1$&$36$&$8100$&$2822400$&$1200622500$\\
\oddrow \hyperref[sec:index_3]{$\FI{4}{2}$}&$162$&$1$&$1$&$72$&$37800$&$31046400$&$31216185000$\\
\evnrow \hyperref[sec:index_3]{$\FI{4}{1}$}&$81$&$1$&$1$&$360$&$1247400$&$6861254400$&$46381007673000$\\
\addlinespace[1.1ex]
\end{longtable}

\pagebreak 

\begin{longtable}{cccccccc}
\caption{Four-dimensional Fano manifolds with Fano index $r=2$}
\label{table:index_2}\\
\toprule
\multicolumn{1}{c}{$X$}&\multicolumn{1}{c}{$(-K_X)^4$}&\multicolumn{1}{c}{$\rho_X$}&\multicolumn{1}{c}{$\alpha_{0}$}&\multicolumn{1}{c}{$\alpha_{2}$}&\multicolumn{1}{c}{$\alpha_{4}$}&\multicolumn{1}{c}{$\alpha_{6}$}&\multicolumn{1}{c}{$\alpha_{8}$}\\
\midrule
\endfirsthead
\multicolumn{8}{l}{\vspace{-0.7em}\tiny Continued from previous page.}\\
\addlinespace[1.7ex]
\midrule
\multicolumn{1}{c}{$X$}&\multicolumn{1}{c}{$(-K_X)^4$}&\multicolumn{1}{c}{$\rho_X$}&\multicolumn{1}{c}{$\alpha_{0}$}&\multicolumn{1}{c}{$\alpha_{2}$}&\multicolumn{1}{c}{$\alpha_{4}$}&\multicolumn{1}{c}{$\alpha_{6}$}&\multicolumn{1}{c}{$\alpha_{8}$}\\
\midrule
\endhead
\midrule
\multicolumn{8}{r}{\raisebox{0.2em}{\tiny Continued on next page.}}\\
\endfoot
\bottomrule
\endlastfoot
\oddrow \hyperref[sec:MW^4_15]{$\MW{4}{15}$}&$640$&$2$&$1$&$2$&$6$&$380$&$6790$\\
\evnrow \hyperref[sec:MW^4_14]{$\PP^1\times \PP^3$}&$512$&$2$&$1$&$2$&$30$&$740$&$12670$\\
\oddrow \hyperref[sec:MW^4_13]{$\MW{4}{13}$}&$480$&$2$&$1$&$2$&$54$&$740$&$21910$\\
\evnrow \hyperref[sec:MW^4_12]{$\MW{4}{12}$}&$416$&$2$&$1$&$2$&$54$&$1100$&$28630$\\
\oddrow \hyperref[sec:MW^4_17]{$\PP^1\times \MM{2}{35}$}&$448$&$3$&$1$&$4$&$60$&$1480$&$41020$\\
\evnrow \hyperref[sec:MW^4_11]{$\MW{4}{11}$}&$384$&$2$&$1$&$4$&$84$&$2200$&$70420$\\
\oddrow \hyperref[sec:MW^4_10]{$\MW{4}{10}$}&$352$&$2$&$1$&$4$&$84$&$2560$&$87220$\\
\evnrow \hyperref[sec:MW^4_7]{$\MW{4}{7}$}&$320$&$2$&$1$&$4$&$108$&$3280$&$126700$\\
\oddrow \hyperref[sec:MW^4_16]{$\PP^1\times \MM{2}{32}$}&$384$&$3$&$1$&$6$&$114$&$3300$&$114450$\\
\evnrow \hyperref[sec:MW^4_8]{$\MW{4}{8}$}&$320$&$2$&$1$&$6$&$138$&$4740$&$194250$\\
\oddrow \hyperref[sec:V^4_18]{$\VV{4}{18}$}&$288$&$1$&$1$&$6$&$162$&$6180$&$284130$\\
\evnrow \hyperref[sec:MW^4_5]{$\MW{4}{5}$}&$256$&$2$&$1$&$6$&$186$&$7980$&$410970$\\
\oddrow \hyperref[sec:MW^4_18]{$\PP^1\times \PP^1\times \PP^1\times \PP^1$}&$384$&$4$&$1$&$8$&$168$&$5120$&$190120$\\
\evnrow \hyperref[sec:MW^4_9]{$\PP^1\times \BB{3}{5}$}&$320$&$2$&$1$&$8$&$192$&$6920$&$303520$\\
\oddrow \hyperref[sec:V^4_16]{$\VV{4}{16}$}&$256$&$1$&$1$&$8$&$240$&$10880$&$597520$\\
\evnrow \hyperref[sec:V^4_14]{$\VV{4}{14}$}&$224$&$1$&$1$&$8$&$288$&$15200$&$968800$\\
\oddrow \hyperref[sec:MW^4_4]{$\MW{4}{4}$}&$192$&$2$&$1$&$8$&$360$&$22400$&$1695400$\\
\evnrow \hyperref[sec:MW^4_6]{$\PP^1\times \BB{3}{4}$}&$256$&$2$&$1$&$10$&$318$&$15220$&$886270$\\
\oddrow \hyperref[sec:V^4_12]{$\VV{4}{12}$}&$192$&$1$&$1$&$10$&$438$&$28900$&$2310070$\\
\evnrow \hyperref[sec:V^4_10]{$\VV{4}{10}$}&$160$&$1$&$1$&$12$&$684$&$58800$&$6129900$\\
\oddrow \hyperref[sec:MW^4_3]{$\PP^1\times \BB{3}{3}$}&$192$&$2$&$1$&$14$&$690$&$50900$&$4540690$\\
\evnrow \hyperref[sec:V^4_8]{$\VV{4}{8}$}&$128$&$1$&$1$&$16$&$1296$&$160000$&$24010000$\\
\oddrow \hyperref[sec:V^4_6]{$\VV{4}{6}$}&$96$&$1$&$1$&$24$&$3240$&$672000$&$169785000$\\
\evnrow \hyperref[sec:MW^4_2]{$\PP^1\times \BB{3}{2}$}&$128$&$2$&$1$&$26$&$2814$&$447380$&$84832510$\\
\oddrow \hyperref[sec:V^4_4]{$\VV{4}{4}$}&$64$&$1$&$1$&$48$&$15120$&$7392000$&$4414410000$\\
\evnrow \hyperref[sec:MW^4_1]{$\PP^1\times \BB{3}{1}$}&$64$&$2$&$1$&$122$&$84606$&$84187220$&$98308169470$\\
\oddrow \hyperref[sec:V^4_2]{$\VV{4}{2}$}&$32$&$1$&$1$&$240$&$498960$&$1633632000$&$6558930378000$\\
\addlinespace[1.1ex]
\end{longtable}

\begin{landscape}
  \LTcapwidth=\textwidth
  \begin{longtable}{ccccccccccc}
\caption{Certain four-dimensional Fano manifolds with Fano index $r=1$}
\label{table:index_1}\\
\toprule
\multicolumn{1}{c}{$X$}&\multicolumn{1}{c}{$(-K_X)^4$}&\multicolumn{1}{c}{$\rho_X$}&\multicolumn{1}{c}{$\alpha_{0}$}&\multicolumn{1}{c}{$\alpha_{1}$}&\multicolumn{1}{c}{$\alpha_{2}$}&\multicolumn{1}{c}{$\alpha_{3}$}&\multicolumn{1}{c}{$\alpha_{4}$}&\multicolumn{1}{c}{$\alpha_{5}$}&\multicolumn{1}{c}{$\alpha_{6}$}&\multicolumn{1}{c}{$\alpha_{7}$}\\
\midrule
\endfirsthead
\multicolumn{11}{l}{\vspace{-0.7em}\tiny Continued from previous page.}\\
\addlinespace[1.7ex]
\midrule
\multicolumn{1}{c}{$X$}&\multicolumn{1}{c}{$(-K_X)^4$}&\multicolumn{1}{c}{$\rho_X$}&\multicolumn{1}{c}{$\alpha_{0}$}&\multicolumn{1}{c}{$\alpha_{1}$}&\multicolumn{1}{c}{$\alpha_{2}$}&\multicolumn{1}{c}{$\alpha_{3}$}&\multicolumn{1}{c}{$\alpha_{4}$}&\multicolumn{1}{c}{$\alpha_{5}$}&\multicolumn{1}{c}{$\alpha_{6}$}&\multicolumn{1}{c}{$\alpha_{7}$}\\
\midrule
\endhead
\midrule
\multicolumn{11}{r}{\raisebox{0.2em}{\tiny Continued on next page.}}\\
\endfoot
\bottomrule
\endlastfoot
\oddrow $\Obro{4}{115}$&$512$&$2$&$1$&$0$&$0$&$0$&$24$&$120$&$0$&$0$\\
\evnrow $\Obro{4}{21}$&$594$&$2$&$1$&$0$&$0$&$6$&$0$&$0$&$90$&$1260$\\
\oddrow $\Obro{4}{118}$&$513$&$2$&$1$&$0$&$0$&$6$&$0$&$120$&$90$&$0$\\
\evnrow $\Obro{4}{17}$&$450$&$3$&$1$&$0$&$0$&$6$&$0$&$120$&$90$&$1260$\\
\oddrow $\Obro{4}{47}$&$513$&$2$&$1$&$0$&$0$&$6$&$24$&$0$&$90$&$2520$\\
\evnrow $\Obro{4}{94}$&$459$&$3$&$1$&$0$&$0$&$6$&$24$&$120$&$90$&$1260$\\
\oddrow $\Obro{4}{37}$&$417$&$3$&$1$&$0$&$0$&$6$&$24$&$120$&$90$&$2520$\\
\evnrow $\Obro{4}{74}$&$486$&$3$&$1$&$0$&$0$&$6$&$48$&$0$&$90$&$2520$\\
\oddrow $\Obro{4}{86}$&$405$&$3$&$1$&$0$&$0$&$6$&$48$&$120$&$90$&$2520$\\
\evnrow $\Obro{4}{114}$&$401$&$3$&$1$&$0$&$0$&$12$&$0$&$120$&$900$&$0$\\
\oddrow $\Obro{4}{46}$&$406$&$3$&$1$&$0$&$0$&$12$&$24$&$0$&$900$&$3780$\\
\evnrow $\Obro{4}{87}$&$364$&$4$&$1$&$0$&$0$&$12$&$24$&$120$&$900$&$3780$\\
\oddrow $\Obro{4}{32}$&$322$&$4$&$1$&$0$&$0$&$12$&$24$&$240$&$900$&$5040$\\
\evnrow $\Obro{4}{30}$&$327$&$4$&$1$&$0$&$0$&$12$&$48$&$120$&$900$&$7560$\\
\oddrow $\Obro{4}{31}$&$249$&$5$&$1$&$0$&$0$&$18$&$72$&$360$&$2430$&$18900$\\
\evnrow $\Str_1$&$225$&$2$&$1$&$0$&$0$&$30$&$120$&$240$&$5850$&$50400$\\
\oddrow $\Obro{4}{2}$&$800$&$2$&$1$&$0$&$2$&$0$&$6$&$0$&$20$&$840$\\
\evnrow $\Obro{4}{1}$&$605$&$3$&$1$&$0$&$2$&$0$&$6$&$0$&$380$&$840$\\
\oddrow $\Obro{4}{12}$&$560$&$3$&$1$&$0$&$2$&$0$&$6$&$60$&$380$&$840$\\
\evnrow $\Obro{4}{121}$&$544$&$2$&$1$&$0$&$2$&$0$&$6$&$120$&$20$&$2520$\\
\oddrow $\Obro{4}{105}$&$489$&$3$&$1$&$0$&$2$&$0$&$6$&$120$&$380$&$2520$\\
\evnrow $\Obro{4}{18}$&$529$&$3$&$1$&$0$&$2$&$0$&$30$&$60$&$380$&$840$\\
\oddrow $\Obro{4}{10}$&$496$&$4$&$1$&$0$&$2$&$0$&$30$&$60$&$740$&$840$\\
\evnrow $\Obro{4}{109}$&$464$&$3$&$1$&$0$&$2$&$0$&$30$&$120$&$380$&$2520$\\
\oddrow $\Obro{4}{104}$&$431$&$3$&$1$&$0$&$2$&$0$&$30$&$120$&$740$&$2520$\\
\evnrow $\Obro{4}{15}$&$433$&$4$&$1$&$0$&$2$&$0$&$30$&$180$&$380$&$3360$\\
\oddrow $\Obro{4}{11}$&$415$&$4$&$1$&$0$&$2$&$0$&$30$&$180$&$740$&$3360$\\
\evnrow $\Obro{4}{8}$&$576$&$3$&$1$&$0$&$2$&$6$&$6$&$60$&$110$&$1680$\\
\oddrow $\Obro{4}{26}$&$560$&$3$&$1$&$0$&$2$&$6$&$6$&$60$&$470$&$420$\\
\evnrow $\Obro{4}{7}$&$592$&$3$&$1$&$0$&$2$&$6$&$6$&$120$&$110$&$1260$\\
\oddrow $\Obro{4}{20}$&$400$&$3$&$1$&$0$&$2$&$6$&$6$&$120$&$830$&$2520$\\
\evnrow $\Obro{4}{111}$&$480$&$3$&$1$&$0$&$2$&$6$&$6$&$180$&$110$&$2940$\\
\oddrow $\Obro{4}{24}$&$442$&$4$&$1$&$0$&$2$&$6$&$6$&$180$&$470$&$2940$\\
\evnrow $\Obro{4}{106}$&$496$&$3$&$1$&$0$&$2$&$6$&$30$&$60$&$470$&$2940$\\
\oddrow $\Obro{4}{45}$&$432$&$3$&$1$&$0$&$2$&$6$&$30$&$60$&$830$&$2940$\\
\evnrow $\Obro{4}{41}$&$433$&$3$&$1$&$0$&$2$&$6$&$30$&$120$&$470$&$3780$\\
\oddrow $\Obro{4}{6}$&$463$&$4$&$1$&$0$&$2$&$6$&$30$&$120$&$470$&$3780$\\
\evnrow $\PP^1\times \MM{2}{33}$&$432$&$3$&$1$&$0$&$2$&$6$&$30$&$120$&$830$&$2520$\\
\oddrow $\Obro{4}{82}$&$432$&$4$&$1$&$0$&$2$&$6$&$30$&$180$&$470$&$4200$\\
\evnrow $\Obro{4}{113}$&$400$&$3$&$1$&$0$&$2$&$6$&$30$&$180$&$470$&$5460$\\
\oddrow $\Obro{4}{92}$&$384$&$4$&$1$&$0$&$2$&$6$&$30$&$180$&$830$&$5460$\\
\evnrow $\Obro{4}{70}$&$411$&$4$&$1$&$0$&$2$&$6$&$30$&$240$&$470$&$5040$\\
\oddrow $\Obro{4}{16}$&$337$&$4$&$1$&$0$&$2$&$6$&$30$&$240$&$1190$&$7560$\\
\evnrow $\Obro{4}{52}$&$464$&$4$&$1$&$0$&$2$&$6$&$54$&$60$&$830$&$2940$\\
\oddrow $\Obro{4}{71}$&$390$&$4$&$1$&$0$&$2$&$6$&$54$&$120$&$1190$&$3780$\\
\evnrow $\Obro{4}{91}$&$384$&$4$&$1$&$0$&$2$&$6$&$54$&$180$&$830$&$5460$\\
\oddrow $\Obro{4}{13}$&$368$&$4$&$1$&$0$&$2$&$6$&$54$&$180$&$830$&$5880$\\
\evnrow $\Obro{4}{81}$&$357$&$4$&$1$&$0$&$2$&$6$&$54$&$240$&$1190$&$6300$\\
\oddrow $\PP^2\times F_1$&$432$&$3$&$1$&$0$&$2$&$12$&$6$&$180$&$920$&$1680$\\
\evnrow $\PP^1\times Q^3$&$432$&$2$&$1$&$0$&$2$&$12$&$6$&$240$&$560$&$2520$\\
\oddrow $\Obro{4}{27}$&$417$&$4$&$1$&$0$&$2$&$12$&$6$&$240$&$560$&$3360$\\
\evnrow $\Obro{4}{60}$&$448$&$4$&$1$&$0$&$2$&$12$&$30$&$120$&$920$&$4620$\\
\oddrow $\Obro{4}{88}$&$389$&$4$&$1$&$0$&$2$&$12$&$30$&$180$&$1280$&$5460$\\
\evnrow $\Obro{4}{35}$&$369$&$4$&$1$&$0$&$2$&$12$&$30$&$180$&$1280$&$5460$\\
\oddrow $\PP^1\times \MM{2}{30}$&$368$&$3$&$1$&$0$&$2$&$12$&$30$&$240$&$1280$&$5040$\\
\evnrow $\Obro{4}{93}$&$347$&$4$&$1$&$0$&$2$&$12$&$30$&$300$&$1280$&$7980$\\
\oddrow $\Obro{4}{85}$&$352$&$4$&$1$&$0$&$2$&$12$&$54$&$240$&$1280$&$9660$\\
\evnrow $\Obro{4}{42}$&$326$&$4$&$1$&$0$&$2$&$12$&$54$&$240$&$1640$&$10080$\\
\oddrow $\Obro{4}{51}$&$480$&$4$&$1$&$0$&$2$&$18$&$6$&$180$&$1370$&$1260$\\
\evnrow $\PP^1\times \MM{2}{28}$&$320$&$3$&$1$&$0$&$2$&$18$&$30$&$360$&$2090$&$7560$\\
\oddrow $\Obro{4}{73}$&$352$&$4$&$1$&$0$&$2$&$18$&$54$&$180$&$2090$&$11340$\\
\evnrow $\Str_2$&$240$&$2$&$1$&$0$&$2$&$30$&$54$&$600$&$6590$&$26040$\\
\oddrow $\PP^1\times \MM{2}{36}$&$496$&$3$&$1$&$0$&$4$&$0$&$36$&$60$&$400$&$3360$\\
\evnrow $\Obro{4}{43}$&$464$&$3$&$1$&$0$&$4$&$0$&$36$&$120$&$400$&$5040$\\
\oddrow $\PP^1\times \MM{3}{29}$&$400$&$4$&$1$&$0$&$4$&$0$&$60$&$60$&$1480$&$3360$\\
\evnrow $\Obro{4}{36}$&$384$&$4$&$1$&$0$&$4$&$0$&$60$&$120$&$1480$&$5040$\\
\oddrow $\Obro{4}{3}$&$558$&$4$&$1$&$0$&$4$&$6$&$36$&$120$&$490$&$3360$\\
\evnrow $\Obro{4}{22}$&$505$&$4$&$1$&$0$&$4$&$6$&$36$&$120$&$850$&$2100$\\
\oddrow $\Obro{4}{5}$&$478$&$4$&$1$&$0$&$4$&$6$&$36$&$180$&$490$&$5460$\\
\evnrow $\Obro{4}{9}$&$382$&$4$&$1$&$0$&$4$&$6$&$36$&$180$&$1210$&$6720$\\
\oddrow $\Obro{4}{95}$&$447$&$4$&$1$&$0$&$4$&$6$&$36$&$240$&$490$&$7140$\\
\evnrow $\PP^2\times \PP^1\times \PP^1$&$432$&$3$&$1$&$0$&$4$&$6$&$36$&$240$&$490$&$7560$\\
\oddrow $\Obro{4}{25}$&$409$&$4$&$1$&$0$&$4$&$6$&$36$&$240$&$850$&$7140$\\
\evnrow $\Obro{4}{100}$&$415$&$4$&$1$&$0$&$4$&$6$&$60$&$120$&$1570$&$4620$\\
\oddrow $\PP^1\times \MM{3}{30}$&$400$&$4$&$1$&$0$&$4$&$6$&$60$&$180$&$1570$&$5460$\\
\evnrow $\Obro{4}{34}$&$369$&$4$&$1$&$0$&$4$&$6$&$60$&$180$&$1570$&$6720$\\
\oddrow $\Obro{4}{56}$&$405$&$5$&$1$&$0$&$4$&$6$&$60$&$240$&$1210$&$8400$\\
\evnrow $\PP^1\times \MM{3}{26}$&$368$&$4$&$1$&$0$&$4$&$6$&$60$&$240$&$1570$&$8820$\\
\oddrow $\Obro{4}{102}$&$367$&$4$&$1$&$0$&$4$&$6$&$60$&$240$&$1570$&$9660$\\
\evnrow $\Obro{4}{44}$&$351$&$4$&$1$&$0$&$4$&$6$&$60$&$240$&$1930$&$9660$\\
\oddrow $\Obro{4}{48}$&$442$&$5$&$1$&$0$&$4$&$6$&$84$&$120$&$1930$&$4620$\\
\evnrow $\PP^1\times \MM{3}{22}$&$320$&$4$&$1$&$0$&$4$&$6$&$84$&$300$&$2650$&$13440$\\
\oddrow $\Obro{4}{29}$&$310$&$5$&$1$&$0$&$4$&$6$&$84$&$360$&$2650$&$15120$\\
\evnrow $\PP^1\times \MM{3}{31}$&$416$&$4$&$1$&$0$&$4$&$12$&$36$&$360$&$940$&$8400$\\
\oddrow $F_1\times F_1$&$384$&$4$&$1$&$0$&$4$&$12$&$36$&$360$&$1300$&$8400$\\
\evnrow $\SS{2}{7}\times \PP^2$&$378$&$4$&$1$&$0$&$4$&$12$&$36$&$360$&$1300$&$9660$\\
\oddrow $\PP^1\times \MM{2}{31}$&$368$&$3$&$1$&$0$&$4$&$12$&$36$&$420$&$940$&$11760$\\
\evnrow $\Obro{4}{54}$&$405$&$5$&$1$&$0$&$4$&$12$&$60$&$300$&$1660$&$10080$\\
\oddrow $\Obro{4}{58}$&$373$&$5$&$1$&$0$&$4$&$12$&$60$&$300$&$2020$&$10080$\\
\evnrow $\PP^1\times \MM{3}{25}$&$352$&$4$&$1$&$0$&$4$&$12$&$60$&$360$&$2020$&$10920$\\
\oddrow $\Obro{4}{66}$&$332$&$5$&$1$&$0$&$4$&$12$&$60$&$360$&$2020$&$13440$\\
\evnrow $\PP^1\times \MM{3}{23}$&$336$&$4$&$1$&$0$&$4$&$12$&$60$&$420$&$2020$&$14280$\\
\oddrow $\Obro{4}{28}$&$321$&$5$&$1$&$0$&$4$&$12$&$60$&$420$&$2020$&$16380$\\
\evnrow $\Obro{4}{65}$&$331$&$5$&$1$&$0$&$4$&$12$&$84$&$420$&$2380$&$17640$\\
\oddrow $\Obro{4}{80}$&$325$&$5$&$1$&$0$&$4$&$12$&$84$&$420$&$2740$&$17640$\\
\evnrow $\PP^1\times \MM{3}{19}$&$304$&$4$&$1$&$0$&$4$&$12$&$84$&$480$&$3100$&$20160$\\
\oddrow $\Obro{4}{50}$&$394$&$5$&$1$&$0$&$4$&$18$&$36$&$480$&$1750$&$10500$\\
\evnrow $\Obro{4}{68}$&$363$&$5$&$1$&$0$&$4$&$18$&$36$&$480$&$2110$&$10500$\\
\oddrow $\Obro{4}{59}$&$341$&$5$&$1$&$0$&$4$&$18$&$60$&$480$&$2830$&$15540$\\
\evnrow $\PP^1\times \MM{2}{27}$&$304$&$3$&$1$&$0$&$4$&$18$&$60$&$600$&$2830$&$19740$\\
\oddrow $\Obro{4}{53}$&$330$&$5$&$1$&$0$&$4$&$18$&$84$&$480$&$3190$&$20580$\\
\evnrow $\Obro{4}{69}$&$310$&$5$&$1$&$0$&$4$&$18$&$84$&$480$&$3550$&$20580$\\
\oddrow $\Obro{4}{84}$&$299$&$5$&$1$&$0$&$4$&$18$&$84$&$600$&$3550$&$25620$\\
\evnrow $\PP^1\times \MM{3}{14}$&$256$&$4$&$1$&$0$&$4$&$18$&$132$&$780$&$6070$&$42420$\\
\oddrow $\PP^1\times \MM{3}{9}$&$208$&$4$&$1$&$0$&$4$&$36$&$228$&$1560$&$15340$&$122640$\\
\evnrow $\Obro{4}{38}$&$385$&$4$&$1$&$0$&$6$&$0$&$90$&$120$&$1860$&$7560$\\
\oddrow $\PP^1\times \PP^1\times F_1$&$384$&$4$&$1$&$0$&$6$&$6$&$90$&$300$&$1950$&$13020$\\
\evnrow $\PP^1\times \MM{4}{13}$&$368$&$5$&$1$&$0$&$6$&$6$&$114$&$240$&$3390$&$9660$\\
\oddrow $\PP^1\times \MM{3}{24}$&$336$&$4$&$1$&$0$&$6$&$6$&$114$&$300$&$3390$&$14280$\\
\evnrow $\Obro{4}{33}$&$305$&$5$&$1$&$0$&$6$&$6$&$114$&$360$&$3750$&$18480$\\
\oddrow $\Obro{4}{4}$&$364$&$5$&$1$&$0$&$6$&$12$&$90$&$420$&$2760$&$17220$\\
\evnrow $\Obro{4}{23}$&$354$&$5$&$1$&$0$&$6$&$12$&$90$&$480$&$2760$&$20160$\\
\oddrow $\PP^1\times \MM{4}{12}$&$352$&$5$&$1$&$0$&$6$&$12$&$90$&$540$&$2400$&$21420$\\
\evnrow $\SS{2}{7}\times F_1$&$336$&$5$&$1$&$0$&$6$&$12$&$90$&$540$&$2760$&$21420$\\
\oddrow $\PP^1\times \MM{2}{29}$&$320$&$3$&$1$&$0$&$6$&$12$&$90$&$600$&$2400$&$26040$\\
\evnrow $\Obro{4}{96}$&$334$&$5$&$1$&$0$&$6$&$12$&$114$&$480$&$3840$&$22680$\\
\oddrow $\PP^1\times \MM{4}{10}$&$320$&$5$&$1$&$0$&$6$&$12$&$114$&$540$&$3840$&$23940$\\
\evnrow $\PP^1\times \MM{3}{20}$&$304$&$4$&$1$&$0$&$6$&$12$&$114$&$600$&$3840$&$28560$\\
\oddrow $\PP^1\times \MM{3}{17}$&$288$&$4$&$1$&$0$&$6$&$12$&$138$&$600$&$5280$&$31080$\\
\evnrow $\SS{2}{6}\times \PP^2$&$324$&$5$&$1$&$0$&$6$&$18$&$90$&$720$&$3570$&$28980$\\
\oddrow $\PP^1\times \MM{3}{18}$&$288$&$4$&$1$&$0$&$6$&$18$&$114$&$840$&$4650$&$38220$\\
\evnrow $\Obro{4}{57}$&$298$&$6$&$1$&$0$&$6$&$18$&$138$&$780$&$5730$&$39480$\\
\oddrow $\PP^1\times \MM{3}{16}$&$272$&$4$&$1$&$0$&$6$&$18$&$138$&$900$&$6090$&$46620$\\
\evnrow $\PP^1\times \MM{2}{25}$&$256$&$3$&$1$&$0$&$6$&$24$&$114$&$1200$&$5820$&$57120$\\
\oddrow $\Obro{4}{49}$&$308$&$6$&$1$&$0$&$6$&$24$&$138$&$960$&$6180$&$46200$\\
\evnrow $\Obro{4}{55}$&$298$&$6$&$1$&$0$&$6$&$24$&$138$&$960$&$6540$&$46200$\\
\oddrow $\Obro{4}{63}$&$278$&$6$&$1$&$0$&$6$&$24$&$138$&$1080$&$6540$&$53760$\\
\evnrow $\Obro{4}{64}$&$268$&$6$&$1$&$0$&$6$&$24$&$162$&$960$&$7980$&$53760$\\
\oddrow $\PP^1\times \MM{2}{24}$&$240$&$3$&$1$&$0$&$6$&$24$&$186$&$1260$&$10140$&$78120$\\
\evnrow $\Obro{4}{39}$&$307$&$5$&$1$&$0$&$8$&$0$&$168$&$120$&$5120$&$10080$\\
\oddrow $\SS{2}{7}\times \PP^1\times \PP^1$&$336$&$5$&$1$&$0$&$8$&$6$&$168$&$360$&$5210$&$19740$\\
\evnrow $\PP^1\times \MM{3}{21}$&$304$&$4$&$1$&$0$&$8$&$6$&$192$&$360$&$7010$&$21000$\\
\oddrow $\PP^1\times \MM{4}{9}$&$304$&$5$&$1$&$0$&$8$&$12$&$168$&$720$&$5660$&$39480$\\
\evnrow $\SS{2}{7}\times \SS{2}{7}$&$294$&$6$&$1$&$0$&$8$&$12$&$168$&$720$&$6020$&$39480$\\
\oddrow $\PP^1\times \MM{4}{8}$&$288$&$5$&$1$&$0$&$8$&$12$&$192$&$720$&$7460$&$42000$\\
\evnrow $\PP^1\times \MM{2}{26}$&$272$&$3$&$1$&$0$&$8$&$12$&$192$&$780$&$7460$&$47880$\\
\oddrow $\SS{2}{6}\times F_1$&$288$&$6$&$1$&$0$&$8$&$18$&$168$&$1020$&$6830$&$54600$\\
\evnrow $\PP^1\times \MM{5}{2}$&$288$&$6$&$1$&$0$&$8$&$18$&$192$&$1020$&$7910$&$57120$\\
\oddrow $\PP^1\times \MM{4}{7}$&$272$&$5$&$1$&$0$&$8$&$18$&$192$&$1080$&$8270$&$63000$\\
\evnrow $\PP^1\times \MM{3}{15}$&$256$&$4$&$1$&$0$&$8$&$18$&$216$&$1140$&$10070$&$72660$\\
\oddrow $\PP^1\times \MM{4}{5}$&$256$&$5$&$1$&$0$&$8$&$24$&$216$&$1440$&$10880$&$89040$\\
\evnrow $\PP^1\times \MM{2}{22}$&$240$&$3$&$1$&$0$&$8$&$24$&$216$&$1560$&$11240$&$100800$\\
\oddrow $\PP^1\times \MM{3}{13}$&$240$&$4$&$1$&$0$&$8$&$24$&$240$&$1560$&$13040$&$105840$\\
\evnrow $\PP^1\times \MM{3}{11}$&$224$&$4$&$1$&$0$&$8$&$30$&$264$&$1980$&$16370$&$142800$\\
\oddrow $\PP^1\times \MM{2}{18}$&$192$&$3$&$1$&$0$&$8$&$48$&$360$&$3360$&$31040$&$295680$\\
\evnrow $\Obro{4}{40}$&$230$&$6$&$1$&$0$&$10$&$0$&$270$&$240$&$10900$&$25200$\\
\oddrow $\SS{2}{6}\times \PP^1\times \PP^1$&$288$&$6$&$1$&$0$&$10$&$12$&$270$&$840$&$11080$&$55440$\\
\evnrow $\PP^1\times \MM{2}{23}$&$240$&$3$&$1$&$0$&$10$&$12$&$318$&$960$&$15760$&$74760$\\
\oddrow $\SS{2}{6}\times \SS{2}{7}$&$252$&$7$&$1$&$0$&$10$&$18$&$270$&$1320$&$12610$&$91560$\\
\evnrow $\PP^1\times \MM{4}{6}$&$256$&$5$&$1$&$0$&$10$&$18$&$294$&$1320$&$14050$&$94080$\\
\oddrow $\PP^1\times \MM{4}{4}$&$240$&$5$&$1$&$0$&$10$&$24$&$318$&$1800$&$17380$&$135240$\\
\evnrow $\PP^1\times \MM{2}{21}$&$224$&$3$&$1$&$0$&$10$&$24$&$342$&$1920$&$19900$&$154560$\\
\oddrow $\PP^1\times \MM{3}{12}$&$224$&$4$&$1$&$0$&$10$&$30$&$342$&$2340$&$21070$&$186060$\\
\evnrow $\PP^1\times \MM{2}{19}$&$208$&$3$&$1$&$0$&$10$&$30$&$342$&$2520$&$21430$&$208740$\\
\oddrow $\PP^2\times \SS{2}{5}$&$270$&$6$&$1$&$0$&$10$&$36$&$270$&$2160$&$15040$&$134400$\\
\evnrow $\PP^1\times \MM{2}{20}$&$208$&$3$&$1$&$0$&$10$&$36$&$390$&$2940$&$27640$&$255360$\\
\oddrow $\SS{2}{6}\times \SS{2}{6}$&$216$&$8$&$1$&$0$&$12$&$24$&$396$&$2160$&$23160$&$186480$\\
\evnrow $\PP^1\times \MM{4}{3}$&$224$&$5$&$1$&$0$&$12$&$24$&$444$&$2160$&$26760$&$191520$\\
\oddrow $F_1\times \SS{2}{5}$&$240$&$7$&$1$&$0$&$12$&$36$&$396$&$2820$&$24060$&$219240$\\
\evnrow $\PP^1\times \MM{3}{10}$&$208$&$4$&$1$&$0$&$12$&$36$&$492$&$3360$&$35220$&$319200$\\
\oddrow $\PP^1\times \MM{5}{1}$&$224$&$6$&$1$&$0$&$12$&$42$&$468$&$3480$&$32430$&$300300$\\
\evnrow $\PP^1\times \MM{2}{17}$&$192$&$3$&$1$&$0$&$12$&$42$&$540$&$4140$&$43230$&$423360$\\
\oddrow $\PP^1\times \MM{3}{7}$&$192$&$4$&$1$&$0$&$12$&$48$&$564$&$4680$&$48000$&$486360$\\
\evnrow $\PP^1\times \MM{2}{16}$&$176$&$3$&$1$&$0$&$12$&$60$&$636$&$6120$&$63300$&$693000$\\
\oddrow $\PP^1\times \PP^1\times \SS{2}{5}$&$240$&$7$&$1$&$0$&$14$&$30$&$546$&$2760$&$33350$&$246540$\\
\evnrow $\SS{2}{7}\times \SS{2}{5}$&$210$&$8$&$1$&$0$&$14$&$36$&$546$&$3480$&$37040$&$330540$\\
\oddrow $\PP^1\times \MM{2}{15}$&$176$&$3$&$1$&$0$&$14$&$36$&$714$&$4320$&$59720$&$519120$\\
\evnrow $\PP^1\times \MM{4}{2}$&$208$&$5$&$1$&$0$&$14$&$42$&$618$&$4200$&$46490$&$425880$\\
\oddrow $\PP^1\times \MM{4}{1}$&$192$&$5$&$1$&$0$&$14$&$48$&$690$&$5280$&$59540$&$594720$\\
\evnrow $\PP^1\times \MM{3}{8}$&$192$&$4$&$1$&$0$&$14$&$54$&$690$&$5700$&$61070$&$631260$\\
\oddrow $\PP^1\times \VV{3}{22}$&$176$&$2$&$1$&$0$&$14$&$60$&$786$&$6960$&$78800$&$859320$\\
\evnrow $\SS{2}{6}\times \SS{2}{5}$&$180$&$9$&$1$&$0$&$16$&$42$&$720$&$4920$&$58390$&$567840$\\
\oddrow $\PP^1\times \MM{3}{6}$&$176$&$4$&$1$&$0$&$16$&$66$&$936$&$8280$&$97630$&$1086540$\\
\evnrow $\PP^1\times \MM{2}{12}$&$160$&$3$&$1$&$0$&$16$&$72$&$1056$&$9840$&$122920$&$1428000$\\
\oddrow $\PP^1\times \MM{2}{13}$&$160$&$3$&$1$&$0$&$16$&$84$&$1104$&$11400$&$137860$&$1685040$\\
\evnrow $\PP^1\times \MM{2}{11}$&$144$&$3$&$1$&$0$&$16$&$108$&$1248$&$15600$&$188260$&$2538480$\\
\oddrow $\PP^1\times \MM{2}{14}$&$160$&$3$&$1$&$0$&$18$&$90$&$1302$&$13260$&$168570$&$2089080$\\
\evnrow $\SS{2}{5}\times \SS{2}{5}$&$150$&$10$&$1$&$0$&$20$&$60$&$1140$&$9120$&$121700$&$1377600$\\
\oddrow $\SS{2}{4}\times \PP^2$&$216$&$7$&$1$&$0$&$20$&$102$&$1188$&$11760$&$123050$&$1391880$\\
\evnrow $\PP^1\times \VV{3}{18}$&$144$&$2$&$1$&$0$&$20$&$120$&$1788$&$20760$&$285680$&$3926160$\\
\oddrow $\Str_3$&$86$&$2$&$1$&$0$&$20$&$156$&$2700$&$41040$&$697700$&$12503400$\\
\evnrow $\SS{2}{4}\times F_1$&$192$&$8$&$1$&$0$&$22$&$102$&$1434$&$13740$&$160510$&$1881180$\\
\oddrow $\PP^1\times \MM{3}{3}$&$144$&$4$&$1$&$0$&$22$&$132$&$2058$&$24360$&$345280$&$4867800$\\
\evnrow $\SS{2}{4}\times \PP^1\times \PP^1$&$192$&$8$&$1$&$0$&$24$&$96$&$1704$&$14400$&$193920$&$2150400$\\
\oddrow $\SS{2}{4}\times \SS{2}{7}$&$168$&$9$&$1$&$0$&$24$&$102$&$1704$&$15720$&$205530$&$2452380$\\
\evnrow $\PP^1\times \MM{3}{5}$&$160$&$4$&$1$&$0$&$24$&$126$&$1992$&$21300$&$290130$&$3813600$\\
\oddrow $\PP^1\times \MM{2}{9}$&$128$&$3$&$1$&$0$&$24$&$174$&$2784$&$37680$&$578490$&$9059820$\\
\evnrow $\SS{2}{4}\times \SS{2}{6}$&$144$&$10$&$1$&$0$&$26$&$108$&$1998$&$19080$&$270440$&$3435600$\\
\oddrow $\PP^1\times \MM{3}{4}$&$144$&$4$&$1$&$0$&$26$&$156$&$2574$&$31080$&$457640$&$6657840$\\
\evnrow $\PP^1\times \VV{3}{16}$&$128$&$2$&$1$&$0$&$26$&$192$&$3198$&$44160$&$700820$&$11249280$\\
\oddrow $\PP^1\times \MM{2}{8}$&$112$&$3$&$1$&$0$&$28$&$216$&$3900$&$58800$&$984520$&$17334240$\\
\evnrow $\SS{2}{4}\times \SS{2}{5}$&$120$&$11$&$1$&$0$&$30$&$126$&$2658$&$27720$&$439590$&$6247500$\\
\oddrow $\PP^1\times \MM{2}{10}$&$128$&$3$&$1$&$0$&$30$&$216$&$3858$&$54000$&$891660$&$14726880$\\
\evnrow $\PP^1\times \VV{3}{14}$&$112$&$2$&$1$&$0$&$34$&$312$&$5910$&$97920$&$1820140$&$34520640$\\
\oddrow $\PP^1\times \MM{2}{7}$&$112$&$3$&$1$&$0$&$38$&$348$&$6954$&$117840$&$2268560$&$44336040$\\
\evnrow $\SS{2}{4}\times \SS{2}{4}$&$96$&$12$&$1$&$0$&$40$&$192$&$4776$&$59520$&$1120000$&$19138560$\\
\oddrow $\PP^1\times \MM{2}{6}$&$96$&$3$&$1$&$0$&$46$&$528$&$11826$&$238560$&$5341780$&$122340960$\\
\evnrow $\PP^1\times \VV{3}{12}$&$96$&$2$&$1$&$0$&$50$&$600$&$13758$&$288480$&$6659420$&$157802400$\\
\oddrow $\SS{2}{3}\times \PP^2$&$162$&$8$&$1$&$0$&$54$&$498$&$9882$&$162000$&$2938770$&$54057780$\\
\evnrow $\SS{2}{3}\times F_1$&$144$&$9$&$1$&$0$&$56$&$498$&$10536$&$171900$&$3240110$&$60897480$\\
\oddrow $\PP^1\times \MM{3}{1}$&$96$&$4$&$1$&$0$&$56$&$672$&$16296$&$350400$&$8393600$&$205470720$\\
\evnrow $\SS{2}{3}\times \PP^1\times \PP^1$&$144$&$9$&$1$&$0$&$58$&$492$&$11214$&$178440$&$3502120$&$65938320$\\
\oddrow $\SS{2}{3}\times \SS{2}{7}$&$126$&$10$&$1$&$0$&$58$&$498$&$11214$&$181800$&$3561250$&$68151720$\\
\evnrow $\SS{2}{3}\times \SS{2}{6}$&$108$&$11$&$1$&$0$&$60$&$504$&$11916$&$195120$&$3962040$&$78104880$\\
\oddrow $\PP^1\times \MM{3}{2}$&$112$&$4$&$1$&$0$&$60$&$600$&$13884$&$259440$&$5613000$&$122354400$\\
\evnrow $\SS{2}{3}\times \SS{2}{5}$&$90$&$12$&$1$&$0$&$64$&$522$&$13392$&$225720$&$4887190$&$102194400$\\
\oddrow $\PP^1\times \MM{2}{5}$&$96$&$3$&$1$&$0$&$68$&$816$&$21012$&$465960$&$11662880$&$297392760$\\
\evnrow $\SS{2}{3}\times \SS{2}{4}$&$72$&$13$&$1$&$0$&$74$&$588$&$17550$&$319560$&$7862600$&$185440080$\\
\oddrow $\PP^1\times \VV{3}{10}$&$80$&$2$&$1$&$0$&$80$&$1320$&$38688$&$1078320$&$32604200$&$1016215200$\\
\evnrow $\PP^1\times \MM{2}{4}$&$80$&$3$&$1$&$0$&$92$&$1518$&$47172$&$1357680$&$42774050$&$1385508600$\\
\oddrow $\SS{2}{3}\times \SS{2}{3}$&$54$&$14$&$1$&$0$&$108$&$984$&$37260$&$848880$&$26609400$&$804368880$\\
\evnrow $\PP^1\times \VV{3}{8}$&$64$&$2$&$1$&$0$&$154$&$3840$&$159486$&$6504960$&$284808340$&$12889551360$\\
\oddrow $\SS{2}{2}\times \PP^2$&$108$&$9$&$1$&$0$&$276$&$6822$&$314532$&$12870000$&$570227370$&$25599296520$\\
\evnrow $\SS{2}{2}\times F_1$&$96$&$10$&$1$&$0$&$278$&$6822$&$317850$&$13006380$&$579688190$&$26140920540$\\
\oddrow $\SS{2}{2}\times \PP^1\times \PP^1$&$96$&$10$&$1$&$0$&$280$&$6816$&$321192$&$13126080$&$588430720$&$26621521920$\\
\evnrow $\SS{2}{2}\times \SS{2}{7}$&$84$&$11$&$1$&$0$&$280$&$6822$&$321192$&$13142760$&$589248730$&$26688271260$\\
\oddrow $\SS{2}{2}\times \SS{2}{6}$&$72$&$12$&$1$&$0$&$282$&$6828$&$324558$&$13295880$&$599727720$&$27308448720$\\
\evnrow $\SS{2}{2}\times \SS{2}{5}$&$60$&$13$&$1$&$0$&$286$&$6846$&$331362$&$13619400$&$621807910$&$28636256460$\\
\oddrow $\SS{2}{2}\times \SS{2}{4}$&$48$&$14$&$1$&$0$&$296$&$6912$&$348840$&$14492160$&$681885440$&$32334274560$\\
\evnrow $\PP^1\times \MM{2}{3}$&$64$&$3$&$1$&$0$&$302$&$8472$&$442194$&$21352560$&$1128405740$&$61403700960$\\
\oddrow $\SS{2}{2}\times \SS{2}{3}$&$36$&$15$&$1$&$0$&$330$&$7308$&$413838$&$18050760$&$935040840$&$48854892240$\\
\evnrow $\PP^1\times \VV{3}{6}$&$48$&$2$&$1$&$0$&$398$&$17616$&$1221810$&$85572960$&$6386359700$&$493612489440$\\
\oddrow $\PP^1\times \MM{2}{2}$&$48$&$3$&$1$&$0$&$472$&$21216$&$1568424$&$115141440$&$9050108800$&$736102993920$\\
\evnrow $\SS{2}{2}\times \SS{2}{2}$&$24$&$16$&$1$&$0$&$552$&$13632$&$1086120$&$63331200$&$4672300800$&$350133073920$\\
\oddrow $\PP^1\times \VV{3}{4}$&$32$&$2$&$1$&$0$&$1946$&$215808$&$35318526$&$5981882880$&$1074550170260$&$200205416839680$\\
\evnrow $\SS{2}{1}\times \PP^2$&$54$&$10$&$1$&$0$&$10260$&$2021286$&$618874020$&$184451042160$&$57876574021290$&$18570362883899400$\\
\oddrow $\SS{2}{1}\times F_1$&$48$&$11$&$1$&$0$&$10262$&$2021286$&$618997146$&$184491467820$&$57895141165310$&$18578110239211740$\\
\evnrow $\SS{2}{1}\times \PP^1\times \PP^1$&$48$&$11$&$1$&$0$&$10264$&$2021280$&$619120296$&$184531277760$&$57913469449600$&$18585729302999040$\\
\oddrow $\SS{2}{1}\times \SS{2}{7}$&$42$&$12$&$1$&$0$&$10264$&$2021286$&$619120296$&$184531893480$&$57913712003290$&$18585859292400540$\\
\evnrow $\SS{2}{1}\times \SS{2}{6}$&$36$&$13$&$1$&$0$&$10266$&$2021292$&$619243470$&$184572934920$&$57932529089640$&$18593740045797840$\\
\oddrow $\SS{2}{1}\times \SS{2}{5}$&$30$&$14$&$1$&$0$&$10270$&$2021310$&$619489890$&$184655634120$&$57970416902950$&$18609636764945100$\\
\evnrow $\SS{2}{1}\times \SS{2}{4}$&$24$&$15$&$1$&$0$&$10280$&$2021376$&$620106408$&$184864542720$&$58066057475840$&$18649867837440000$\\
\oddrow $\SS{2}{1}\times \SS{2}{3}$&$18$&$16$&$1$&$0$&$10314$&$2021772$&$622208142$&$185592555720$&$58399032538440$&$18790790073224400$\\
\evnrow $\PP^1\times \MM{2}{1}$&$32$&$3$&$1$&$0$&$10382$&$2082840$&$650724306$&$199392674160$&$64624270834220$&$21530238491351520$\\
\oddrow $\SS{2}{1}\times \SS{2}{2}$&$12$&$17$&$1$&$0$&$10536$&$2028096$&$636179112$&$190741334400$&$60762986684160$&$19811992617768960$\\
\evnrow $\SS{2}{1}\times \SS{2}{1}$&$6$&$18$&$1$&$0$&$20520$&$4042560$&$1869353640$&$783667509120$&$387953543059200$&$204188081194137600$\\
\oddrow $\PP^1\times \VV{3}{2}$&$16$&$2$&$1$&$0$&$68762$&$55200000$&$61055606526$&$71592493125120$&$88810659628444820$&$114429017109750013440$\\
\addlinespace[1.1ex]
\end{longtable}

  It appears from Table~5 as if the regularized quantum period might coincide for the pairs $\{\Obro{4}{6},\Obro{4}{41}\}$ and $\{\Obro{4}{35}, \Obro{4}{88}\}$.  This is not the case.  The coefficients $\alpha_8$,~$\alpha_9$ in these cases are:
  \begin{center}
    \begin{tabular}{ccc}
      \toprule
      $X$ & $\alpha_8$ & $\alpha_9$ \\
      \midrule
      \oddrow $\Obro{4}{6}$ & 14350 & 87360\\ 
      \evnrow $\Obro{4}{35}$ & 32830 & 227640\\ 
      \oddrow $\Obro{4}{41}$ & 10990 & 102480\\ 
      \evnrow $\Obro{4}{88}$ & 32830 & 212520\\ \bottomrule \\
    \end{tabular}
  \end{center}
  Thus~10 terms of the Taylor expansion of the regularized quantum period suffice to distinguish all of the four-dimensional Fano manifolds considered in this paper.
\end{landscape}

\section{Quantum Differential Operators for Four-Dimensional Fano Manifolds of Index $r>1$: Numerical Results}
\label{appendix:qdes}

In this Appendix we record the quantum differential operators for all four-dimensional Fano manifolds of Fano index $r>1$.   These were computed numerically, as described in \S\ref{sec:qdes}, from $500$ terms of the Taylor expansion of the quantum period.  They pass a number of strong consistency checks, and so we are reasonably confident that they are correct, but this has not been rigorously proven.  We record also the local log-monodromies and ramification defect for the quantum local system, that is, for the local system of solutions to the regularized quantum differential equation.  These are derived using exact computer algebra from the (numerically computed) operators $L_X$, as described in \S\ref{sec:qdes}.

\subsection{$\PP^4$}
\label{operator:P4}

\hfill [\hyperref[sec:index_5]{description p.~\pageref*{sec:index_5}}, \hyperref[table:index_5]{regularized quantum period p.~\pageref*{table:index_5}}]  

The quantum differential operator is:
{\small \[
(5t-1)(625t^4+125t^3+25t^2+5t+1) D^{4} + 
31250t^5 D^{3} + 
109375t^5 D^{2} + 
156250t^5 D + 
75000t^5
\]}
The local log-monodromies for the quantum local system:
\begin{align*}
& \tiny \begin{pmatrix}
0&1&0&0\\
0&0&1&0\\
0&0&0&1\\
0&0&0&0
\end{pmatrix} && \text{at $\textstyle t=0$} \\& \tiny \begin{pmatrix}
0&0&0&0\\
0&0&0&0\\
0&0&0&1\\
0&0&0&0
\end{pmatrix} && \text{at $\textstyle t=\frac{1}{5}$} \\& \tiny \begin{pmatrix}
0&0&0&0\\
0&0&0&0\\
0&0&0&1\\
0&0&0&0
\end{pmatrix} && \text{at the roots of $\textstyle 625t^4+125t^3+25t^2+5t+1=0$} \\\end{align*}
The operator $L_X$ is extremal.

\subsection{$Q^4$}
\label{operator:Q4}

\hfill [\hyperref[sec:index_4]{description p.~\pageref*{sec:index_4}}, \hyperref[table:index_4]{regularized quantum period p.~\pageref*{table:index_4}}]  

The quantum differential operator is:
{\small \[
(32t^2-1)(32t^2+1) D^{4} + 
8192t^4 D^{3} + 
23552t^4 D^{2} + 
28672t^4 D + 
12288t^4
\]}
The local log-monodromies for the quantum local system:
\begin{align*}
& \tiny \begin{pmatrix}
0&1&0&0\\
0&0&1&0\\
0&0&0&1\\
0&0&0&0
\end{pmatrix} && \text{at $\textstyle t=0$} \\& \tiny \begin{pmatrix}
0&0&0&0\\
0&0&0&0\\
0&0&0&1\\
0&0&0&0
\end{pmatrix} && \text{at the roots of $\textstyle 32t^2-1=0$} \\& \tiny \begin{pmatrix}
0&0&0&0\\
0&0&0&0\\
0&0&0&1\\
0&0&0&0
\end{pmatrix} && \text{at the roots of $\textstyle 32t^2+1=0$} \\& \tiny \begin{pmatrix}
0&0&0&0\\
0&0&0&0\\
0&0&0&1\\
0&0&0&0
\end{pmatrix} && \text{at $t=\infty$} \\\end{align*}
The operator $L_X$ is extremal.

\pagebreak

\subsection{$\FI{4}{1}$}
\label{operator:FI^4_1}

\hfill [\hyperref[sec:index_3]{description p.~\pageref*{sec:index_3}}, \hyperref[table:index_3]{regularized quantum period p.~\pageref*{table:index_3}}]  

The quantum differential operator is:
{\small \[
(11664t^3-1) D^{4} + 
69984t^3 D^{3} + 
142884t^3 D^{2} + 
113724t^3 D + 
29160t^3
\]}
The local log-monodromies for the quantum local system:
\begin{align*}
& \tiny \begin{pmatrix}
0&1&0&0\\
0&0&1&0\\
0&0&0&1\\
0&0&0&0
\end{pmatrix} && \text{at $\textstyle t=0$} \\& \tiny \begin{pmatrix}
0&0&0&0\\
0&0&0&0\\
0&0&0&1\\
0&0&0&0
\end{pmatrix} && \text{at the roots of $\textstyle 11664t^3-1=0$} \\& \tiny \begin{pmatrix}
0&0&0&0\\
0&0&0&0\\
0&0&\frac{1}{2}&0\\
0&0&0&\frac{1}{2}
\end{pmatrix} && \text{at $t=\infty$} \\\end{align*}
The operator $L_X$ is extremal.

\subsection{$\FI{4}{2}$}
\label{operator:FI^4_2}

\hfill [\hyperref[sec:index_3]{description p.~\pageref*{sec:index_3}}, \hyperref[table:index_3]{regularized quantum period p.~\pageref*{table:index_3}}]  

The quantum differential operator is:
{\small \[
(12t-1)(144t^2+12t+1) D^{4} + 
10368t^3 D^{3} + 
21924t^3 D^{2} + 
19116t^3 D + 
5832t^3
\]}
The local log-monodromies for the quantum local system:
\begin{align*}
& \tiny \begin{pmatrix}
0&1&0&0\\
0&0&1&0\\
0&0&0&1\\
0&0&0&0
\end{pmatrix} && \text{at $\textstyle t=0$} \\& \tiny \begin{pmatrix}
0&0&0&0\\
0&0&0&0\\
0&0&0&1\\
0&0&0&0
\end{pmatrix} && \text{at $\textstyle t=\frac{1}{12}$} \\& \tiny \begin{pmatrix}
0&0&0&0\\
0&0&0&0\\
0&0&0&1\\
0&0&0&0
\end{pmatrix} && \text{at the roots of $\textstyle 144t^2+12t+1=0$} \\& \tiny \begin{pmatrix}
0&0&0&0\\
0&0&0&0\\
0&0&\frac{3}{4}&0\\
0&0&0&\frac{1}{4}
\end{pmatrix} && \text{at $t=\infty$} \\\end{align*}
The operator $L_X$ is extremal.

\subsection{$\FI{4}{3}$}
\label{operator:FI^4_3}

\hfill [\hyperref[sec:index_3]{description p.~\pageref*{sec:index_3}}, \hyperref[table:index_3]{regularized quantum period p.~\pageref*{table:index_3}}]  

The quantum differential operator is:
{\small \[
(9t-1)(81t^2+9t+1) D^{4} + 
4374t^3 D^{3} + 
9477t^3 D^{2} + 
8748t^3 D + 
2916t^3
\]}
The local log-monodromies for the quantum local system:
\begin{align*}
& \tiny \begin{pmatrix}
0&1&0&0\\
0&0&1&0\\
0&0&0&1\\
0&0&0&0
\end{pmatrix} && \text{at $\textstyle t=0$} \\& \tiny \begin{pmatrix}
0&0&0&0\\
0&0&0&0\\
0&0&0&1\\
0&0&0&0
\end{pmatrix} && \text{at $\textstyle t=\frac{1}{9}$} \\& \tiny \begin{pmatrix}
0&0&0&0\\
0&0&0&0\\
0&0&0&1\\
0&0&0&0
\end{pmatrix} && \text{at the roots of $\textstyle 81t^2+9t+1=0$} \\& \tiny \begin{pmatrix}
0&1&0&0\\
0&0&0&0\\
0&0&0&1\\
0&0&0&0
\end{pmatrix} && \text{at $t=\infty$} \\\end{align*}
The operator $L_X$ is extremal.

\subsection{$\FI{4}{4}$}
\label{operator:FI^4_4}

\hfill [\hyperref[sec:index_3]{description p.~\pageref*{sec:index_3}}, \hyperref[table:index_3]{regularized quantum period p.~\pageref*{table:index_3}}]  

The quantum differential operator is:
{\small \[
(432t^3-1) D^{4} + 
2592t^3 D^{3} + 
5724t^3 D^{2} + 
5508t^3 D + 
1944t^3
\]}
The local log-monodromies for the quantum local system:
\begin{align*}
& \tiny \begin{pmatrix}
0&1&0&0\\
0&0&1&0\\
0&0&0&1\\
0&0&0&0
\end{pmatrix} && \text{at $\textstyle t=0$} \\& \tiny \begin{pmatrix}
0&0&0&0\\
0&0&0&0\\
0&0&0&1\\
0&0&0&0
\end{pmatrix} && \text{at the roots of $\textstyle 432t^3-1=0$} \\& \tiny \begin{pmatrix}
0&0&0&0\\
0&0&0&0\\
0&0&\frac{1}{2}&1\\
0&0&0&\frac{1}{2}
\end{pmatrix} && \text{at $t=\infty$} \\\end{align*}
The operator $L_X$ is extremal.

\subsection{$\FI{4}{5}$}
\label{operator:FI^4_5}

\hfill [\hyperref[sec:index_3]{description p.~\pageref*{sec:index_3}}, \hyperref[table:index_3]{regularized quantum period p.~\pageref*{table:index_3}}]  

The quantum differential operator is:
{\small \[
(729t^6+297t^3-1) D^{4} + 
162t^3(54t^3+11) D^{3} + 
27t^3(1323t^3+148) D^{2} + 
81t^3(702t^3+49) D + 
1458t^3(20t^3+1)
\]}
The local log-monodromies for the quantum local system:
\begin{align*}
& \tiny \begin{pmatrix}
0&1&0&0\\
0&0&1&0\\
0&0&0&1\\
0&0&0&0
\end{pmatrix} && \text{at $\textstyle t=0$} \\& \tiny \begin{pmatrix}
0&0&0&0\\
0&0&0&0\\
0&0&0&1\\
0&0&0&0
\end{pmatrix} && \text{at the roots of $\textstyle 729t^6+297t^3-1=0$} \\\end{align*}
The ramification defect of $L_X$ is $1$.

\subsection{$\PP^2\times \PP^2$}
\label{operator:FI^4_6}

\hfill [\hyperref[sec:index_3]{description p.~\pageref*{sec:index_3}}, \hyperref[table:index_3]{regularized quantum period p.~\pageref*{table:index_3}}]  

The quantum differential operator is:
{\small
  \begin{multline*}
    (3t+1)(6t-1)(9t^2-3t+1)(36t^2+6t+1) D^{4} + 
    162t^3(432t^3+7) D^{3} + \\
    27t^3(10584t^3+95) D^{2} + 
    1296t^3(351t^3+2) D + 
    972t^3(240t^3+1)
  \end{multline*}}
The local log-monodromies for the quantum local system:
\begin{align*}
& \tiny \begin{pmatrix}
0&1&0&0\\
0&0&1&0\\
0&0&0&1\\
0&0&0&0
\end{pmatrix} && \text{at $\textstyle t=0$} \\& \tiny \begin{pmatrix}
0&0&0&0\\
0&0&0&0\\
0&0&0&1\\
0&0&0&0
\end{pmatrix} && \text{at $\textstyle t=\frac{1}{6}$} \\& \tiny \begin{pmatrix}
0&0&0&0\\
0&0&0&0\\
0&0&0&1\\
0&0&0&0
\end{pmatrix} && \text{at $\textstyle t=-\frac{1}{3}$} \\& \tiny \begin{pmatrix}
0&0&0&0\\
0&0&0&0\\
0&0&0&1\\
0&0&0&0
\end{pmatrix} && \text{at the roots of $\textstyle 9t^2-3t+1=0$} \\& \tiny \begin{pmatrix}
0&0&0&0\\
0&0&0&0\\
0&0&0&1\\
0&0&0&0
\end{pmatrix} && \text{at the roots of $\textstyle 36t^2+6t+1=0$} \\\end{align*}
The ramification defect of $L_X$ is $1$.

\subsection{$\VV{4}{2}$} \label{operator:V^4_2}  \hfill [\hyperref[sec:V^4_2]{description p.~\pageref*{sec:V^4_2}}, \hyperref[table:index_2]{regularized quantum period p.~\pageref*{table:index_2}}]

The quantum differential operator is:
{\small \[
(6912t^2-1) D^{4} + 
27648t^2 D^{3} + 
38400t^2 D^{2} + 
21504t^2 D + 
3840t^2
\]}
The local log-monodromies for the quantum local system:
\begin{align*}
& \tiny \begin{pmatrix}
0&1&0&0\\
0&0&1&0\\
0&0&0&1\\
0&0&0&0
\end{pmatrix} && \text{at $\textstyle t=0$} \\& \tiny \begin{pmatrix}
0&0&0&0\\
0&0&0&0\\
0&0&0&1\\
0&0&0&0
\end{pmatrix} && \text{at the roots of $\textstyle 6912t^2-1=0$} \\& \tiny \begin{pmatrix}
0&1&0&0\\
0&0&0&0\\
0&0&\frac{2}{3}&0\\
0&0&0&\frac{1}{3}
\end{pmatrix} && \text{at $t=\infty$} \\\end{align*}
The operator $L_X$ is extremal.

\subsection{$\VV{4}{4}$} \label{operator:V^4_4}  \hfill [\hyperref[sec:V^4_4]{description p.~\pageref*{sec:V^4_4}}, \hyperref[table:index_2]{regularized quantum period p.~\pageref*{table:index_2}}]

The quantum differential operator is:
{\small \[
(32t-1)(32t+1) D^{4} + 
4096t^2 D^{3} + 
5888t^2 D^{2} + 
3584t^2 D + 
768t^2
\]}
The local log-monodromies for the quantum local system:
\begin{align*}
& \tiny \begin{pmatrix}
0&1&0&0\\
0&0&1&0\\
0&0&0&1\\
0&0&0&0
\end{pmatrix} && \text{at $\textstyle t=0$} \\& \tiny \begin{pmatrix}
0&0&0&0\\
0&0&0&0\\
0&0&0&1\\
0&0&0&0
\end{pmatrix} && \text{at $\textstyle t=\frac{1}{32}$} \\& \tiny \begin{pmatrix}
0&0&0&0\\
0&0&0&0\\
0&0&0&1\\
0&0&0&0
\end{pmatrix} && \text{at $\textstyle t=-\frac{1}{32}$} \\& \tiny \begin{pmatrix}
0&1&0&0\\
0&0&0&0\\
0&0&\frac{1}{2}&0\\
0&0&0&\frac{1}{2}
\end{pmatrix} && \text{at $t=\infty$} \\\end{align*}
The operator $L_X$ is extremal.

\subsection{$\VV{4}{6}$} \label{operator:V^4_6}  \hfill [\hyperref[sec:V^4_6]{description p.~\pageref*{sec:V^4_6}}, \hyperref[table:index_2]{regularized quantum period p.~\pageref*{table:index_2}}]

The quantum differential operator is:
{\small \[
(432t^2-1) D^{4} + 
1728t^2 D^{3} + 
2544t^2 D^{2} + 
1632t^2 D + 
384t^2
\]}
The local log-monodromies for the quantum local system:
\begin{align*}
& \tiny \begin{pmatrix}
0&1&0&0\\
0&0&1&0\\
0&0&0&1\\
0&0&0&0
\end{pmatrix} && \text{at $\textstyle t=0$} \\& \tiny \begin{pmatrix}
0&0&0&0\\
0&0&0&0\\
0&0&0&1\\
0&0&0&0
\end{pmatrix} && \text{at the roots of $\textstyle 432t^2-1=0$} \\& \tiny \begin{pmatrix}
0&1&0&0\\
0&0&0&0\\
0&0&\frac{2}{3}&0\\
0&0&0&\frac{1}{3}
\end{pmatrix} && \text{at $t=\infty$} \\\end{align*}
The operator $L_X$ is extremal.

\pagebreak

\subsection{$\VV{4}{8}$} \label{operator:V^4_8}  \hfill [\hyperref[sec:V^4_8]{description p.~\pageref*{sec:V^4_8}}, \hyperref[table:index_2]{regularized quantum period p.~\pageref*{table:index_2}}]

The quantum differential operator is:
{\small \[
(16t-1)(16t+1) D^{4} + 
1024t^2 D^{3} + 
1536t^2 D^{2} + 
1024t^2 D + 
256t^2
\]}
The local log-monodromies for the quantum local system:
\begin{align*}
& \tiny \begin{pmatrix}
0&1&0&0\\
0&0&1&0\\
0&0&0&1\\
0&0&0&0
\end{pmatrix} && \text{at $\textstyle t=0$} \\& \tiny \begin{pmatrix}
0&0&0&0\\
0&0&0&0\\
0&0&0&1\\
0&0&0&0
\end{pmatrix} && \text{at $\textstyle t=\frac{1}{16}$} \\& \tiny \begin{pmatrix}
0&0&0&0\\
0&0&0&0\\
0&0&0&1\\
0&0&0&0
\end{pmatrix} && \text{at $\textstyle t=-\frac{1}{16}$} \\& \tiny \begin{pmatrix}
0&1&0&0\\
0&0&1&0\\
0&0&0&1\\
0&0&0&0
\end{pmatrix} && \text{at $t=\infty$} \\\end{align*}
The operator $L_X$ is extremal.

\subsection{$\VV{4}{10}$} \label{operator:V^4_10}  \hfill [\hyperref[sec:V^4_10]{description p.~\pageref*{sec:V^4_10}}, \hyperref[table:index_2]{regularized quantum period p.~\pageref*{table:index_2}}]

The quantum differential operator is:
{\small \[
(256t^4+176t^2-1) D^{4} + 
64t^2(32t^2+11) D^{3} + 
16t^2(352t^2+67) D^{2} + 
32t^2(192t^2+23) D + 
192t^2(12t^2+1)
\]}
The local log-monodromies for the quantum local system:
\begin{align*}
& \tiny \begin{pmatrix}
0&1&0&0\\
0&0&1&0\\
0&0&0&1\\
0&0&0&0
\end{pmatrix} && \text{at $\textstyle t=0$} \\& \tiny \begin{pmatrix}
0&0&0&0\\
0&0&0&0\\
0&0&0&1\\
0&0&0&0
\end{pmatrix} && \text{at the roots of $\textstyle 256t^4+176t^2-1=0$} \\& \tiny \begin{pmatrix}
0&1&0&0\\
0&0&0&0\\
0&0&0&1\\
0&0&0&0
\end{pmatrix} && \text{at $t=\infty$} \\\end{align*}
The ramification defect of $L_X$ is $1$.

\subsection{$\VV{4}{12}$} \label{operator:V^4_12}  \hfill [\hyperref[sec:V^4_12]{description p.~\pageref*{sec:V^4_12}}, \hyperref[table:index_2]{regularized quantum period p.~\pageref*{table:index_2}}]

The quantum differential operator is:
{\small \[
(4t^2-12t+1)(4t^2+12t+1) D^{4} + 
32t^2(4t^2-17) D^{3} + 
8t^2(46t^2-105) D^{2} + 
16t^2(28t^2-37) D + 
32t^2(6t^2-5)
\]}
The local log-monodromies for the quantum local system:
\begin{align*}
& \tiny \begin{pmatrix}
0&1&0&0\\
0&0&1&0\\
0&0&0&1\\
0&0&0&0
\end{pmatrix} && \text{at $\textstyle t=0$} \\& \tiny \begin{pmatrix}
0&0&0&0\\
0&0&0&0\\
0&0&0&1\\
0&0&0&0
\end{pmatrix} && \text{at the roots of $\textstyle 4t^2-12t+1=0$} \\& \tiny \begin{pmatrix}
0&0&0&0\\
0&0&0&0\\
0&0&0&1\\
0&0&0&0
\end{pmatrix} && \text{at the roots of $\textstyle 4t^2+12t+1=0$} \\& \tiny \begin{pmatrix}
0&0&0&0\\
0&0&0&0\\
0&0&0&1\\
0&0&0&0
\end{pmatrix} && \text{at $t=\infty$} \\\end{align*}
The operator $L_X$ is extremal.

\subsection{$\VV{4}{14}$} \label{operator:V^4_14}  \hfill [\hyperref[sec:V^4_14]{description p.~\pageref*{sec:V^4_14}}, \hyperref[table:index_2]{regularized quantum period p.~\pageref*{table:index_2}}]

The quantum differential operator is:
{\small \[
(4t^2+1)(108t^2-1) D^{4} + 
32t^2(108t^2+13) D^{3} + 
24t^2(406t^2+27) D^{2} + 
16t^2(708t^2+29) D + 
128t^2(36t^2+1)
\]}
The local log-monodromies for the quantum local system:
\begin{align*}
& \tiny \begin{pmatrix}
0&1&0&0\\
0&0&1&0\\
0&0&0&1\\
0&0&0&0
\end{pmatrix} && \text{at $\textstyle t=0$} \\& \tiny \begin{pmatrix}
0&0&0&0\\
0&0&0&0\\
0&0&0&1\\
0&0&0&0
\end{pmatrix} && \text{at the roots of $\textstyle 108t^2-1=0$} \\& \tiny \begin{pmatrix}
0&0&0&0\\
0&0&0&0\\
0&0&0&1\\
0&0&0&0
\end{pmatrix} && \text{at the roots of $\textstyle 4t^2+1=0$} \\& \tiny \begin{pmatrix}
0&0&0&0\\
0&0&0&0\\
0&0&\frac{2}{3}&0\\
0&0&0&\frac{1}{3}
\end{pmatrix} && \text{at $t=\infty$} \\\end{align*}
The ramification defect of $L_X$ is $1$.

\subsection{$\VV{4}{16}$} \label{operator:V^4_16}  \hfill [\hyperref[sec:V^4_16]{description p.~\pageref*{sec:V^4_16}}, \hyperref[table:index_2]{regularized quantum period p.~\pageref*{table:index_2}}]

The quantum differential operator is:
{\small
  \begin{multline*}
    (16t^2-8t-1)(16t^2+8t-1) D^{4} + 
    128t^2(16t^2-3) D^{3} + 
    32t^2(184t^2-19) D^{2} + \\ 
    448t^2(4t-1)(4t+1) D + 
    128t^2(24t^2-1)
  \end{multline*}}
The local log-monodromies for the quantum local system:
\begin{align*}
& \tiny \begin{pmatrix}
0&1&0&0\\
0&0&1&0\\
0&0&0&1\\
0&0&0&0
\end{pmatrix} && \text{at $\textstyle t=0$} \\& \tiny \begin{pmatrix}
0&0&0&0\\
0&0&0&0\\
0&0&0&1\\
0&0&0&0
\end{pmatrix} && \text{at the roots of $\textstyle 16t^2-8t-1=0$} \\& \tiny \begin{pmatrix}
0&0&0&0\\
0&0&0&0\\
0&0&0&1\\
0&0&0&0
\end{pmatrix} && \text{at the roots of $\textstyle 16t^2+8t-1=0$} \\& \tiny \begin{pmatrix}
0&0&0&0\\
0&0&0&0\\
0&0&0&1\\
0&0&0&0
\end{pmatrix} && \text{at $t=\infty$} \\\end{align*}
The operator $L_X$ is extremal.

\subsection{$\VV{4}{18}$} \label{operator:V^4_18} \hfill [\hyperref[sec:V^4_18]{description p.~\pageref*{sec:V^4_18}}, \hyperref[table:index_2]{regularized quantum period p.~\pageref*{table:index_2}}]

The quantum differential operator is:
{\small \[
(432t^4+72t^2-1) D^{4} + 
288t^2(12t^2+1) D^{3} + 
24t^2(414t^2+19) D^{2} + 
336t^2(36t^2+1) D + 
96t^2(54t^2+1)
\]}
The local log-monodromies for the quantum local system:
\begin{align*}
& \tiny \begin{pmatrix}
0&1&0&0\\
0&0&1&0\\
0&0&0&1\\
0&0&0&0
\end{pmatrix} && \text{at $\textstyle t=0$} \\& \tiny \begin{pmatrix}
0&0&0&0\\
0&0&0&0\\
0&0&0&1\\
0&0&0&0
\end{pmatrix} && \text{at the roots of $\textstyle 432t^4+72t^2-1=0$} \\& \tiny \begin{pmatrix}
0&0&0&0\\
0&0&0&0\\
0&0&0&1\\
0&0&0&0
\end{pmatrix} && \text{at $t=\infty$} \\\end{align*}
The operator $L_X$ is extremal.

\subsection{$\MW{4}{1}$} \label{operator:MW^4_1}  \hfill [\hyperref[sec:MW^4_1]{description p.~\pageref*{sec:MW^4_1}}, \hyperref[table:index_2]{regularized quantum period p.~\pageref*{table:index_2}}]

The quantum differential operator is:
{\small
  \begin{multline*}
    (2t-1)^2(2t+1)^2(1724t^2-4t-1)(1724t^2+4t-1)\\
    \shoveright{(1384128950480t^6-34997928616t^4-263676995t^2+9409) D^{6} }\\
    \shoveleft{+ 2(2t-1)(2t+1)(181010493290961157120t^{12}-17468834144875533568t^{10}+374429340495784832t^8+}\\
    \shoveright{3959280486757728t^6-1227737299988t^4-725619617t^2+18818) D^{5}} \\    
    \shoveleft{+ 4\left(3126544884116601804800t^{14}-725557954486979610624t^{12}+31166631689741025792t^{10}- \right. }\\
    \shoveright{\left. 440963660134839040t^8-7399870298607304t^6-348759582360t^4-504354223t^2+9409\right) D^{4}} \\
    \shoveleft{+ 8t^2(6746754749935824947200t^{12}-1095289161198143939072t^{10}+33852557194447324800t^8-}\\
    \shoveright{138056179574882528t^6-6267098983057824t^4-620133376448t^2-10735669) D^{3}} \\
    \shoveleft{+ 32t^2(3803277296533945221760t^{12}-460852243532660846400t^{10}+13408867107650109352t^8+}\\
    \shoveright{72880113460188392t^6-1247120973283936t^4-144892007990t^2-573949) D^{2}}\\
    \shoveleft{+ 256t^4(524004808703094940640t^{10}-51043969670376668752t^8+1573801437077923102t^6+}\\
    \shoveright{16338545311012128t^4-54334824441981t^2-5942952755) D} \\
    \shoveleft{+ 1536t^4(35996404915816139200t^{10}-3025511420019920960t^8+}\\
    99559010182515260t^6+1234528802429310t^4-1125770982819t^2-89024854)
\end{multline*}}
The local log-monodromies for the quantum local system:
\begin{align*}
& \tiny \begin{pmatrix}
0&1&0&0&0&0\\
0&0&0&0&0&0\\
0&0&0&1&0&0\\
0&0&0&0&1&0\\
0&0&0&0&0&1\\
0&0&0&0&0&0
\end{pmatrix} && \text{at $\textstyle t=0$} \\& \tiny \begin{pmatrix}
0&0&0&0&0&0\\
0&0&0&0&0&0\\
0&0&0&0&0&0\\
0&0&0&0&0&0\\
0&0&0&0&\frac{5}{6}&0\\
0&0&0&0&0&\frac{1}{6}
\end{pmatrix} && \text{at $\textstyle t=\frac{1}{2}$} \\& \tiny \begin{pmatrix}
0&0&0&0&0&0\\
0&0&0&0&0&0\\
0&0&0&0&0&0\\
0&0&0&0&0&0\\
0&0&0&0&\frac{5}{6}&0\\
0&0&0&0&0&\frac{1}{6}
\end{pmatrix} && \text{at $\textstyle t=-\frac{1}{2}$} \\& \tiny \begin{pmatrix}
0&0&0&0&0&0\\
0&0&0&0&0&0\\
0&0&0&0&0&0\\
0&0&0&0&0&0\\
0&0&0&0&0&1\\
0&0&0&0&0&0
\end{pmatrix} && \text{at the roots of $\textstyle 1724t^2-4t-1=0$} \\& \tiny \begin{pmatrix}
0&0&0&0&0&0\\
0&0&0&0&0&0\\
0&0&0&0&0&0\\
0&0&0&0&0&0\\
0&0&0&0&0&1\\
0&0&0&0&0&0
\end{pmatrix} && \text{at the roots of $\textstyle 1724t^2+4t-1=0$} \\\end{align*}
The operator $L_X$ is extremal.

\pagebreak

\subsection{$\MW{4}{2}$} \label{operator:MW^4_2}  \hfill [\hyperref[sec:MW^4_2]{description p.~\pageref*{sec:MW^4_2}}, \hyperref[table:index_2]{regularized quantum period p.~\pageref*{table:index_2}}]

The quantum differential operator is:
{\small
  \begin{multline*}
    (2t-1)^2(2t+1)^2(14t-1)(14t+1)(18t-1)(18t+1)(7544656t^6-3112t^4-6667t^2+1) D^{6} \\
    \shoveleft{+ 2(2t-1)(2t+1)} \\
    \shoveright{(21081096723456t^{12}-1556683668736t^{10}-25006224512t^8+
    2282941792t^6-4014452t^4-18937t^2+2) D^{5}} \\
    \shoveleft{+ 4\left(364128034314240t^{14}-74616093755392t^{12}+1020882362880t^{10}+\right.} \\
    \shoveright{\left. 104092887296t^8-4150928136t^6-1064664t^4-12743t^2+1\right) D^{4}}\\
    \shoveleft{+ 8t^2(785749968783360t^{12}-103037075309056t^{10}-578957540736t^8+}\\
    \shoveright{167061307168t^6-3800819360t^4-2728128t^2-205) D^{3}}\\
    \shoveleft{+ 32t^2(442942589109888t^{12}-38313217780544t^{10}-591376611224t^8+}\\
    \shoveright{79457857384t^6-864235264t^4-697078t^2-13) D^{2}} \\
    \shoveright{+ 256t^4(61027379435232t^{10}-3662113129808t^8-80739092050t^6+9991256448t^4-46797421t^2-33179) D}\\
    + 7680t^4(838452710592t^{10}-37503518528t^8-1048500436t^6+130762470t^4-249799t^2-110)
  \end{multline*}}
The local log-monodromies for the quantum local system:
\begin{align*}
& \tiny \begin{pmatrix}
0&1&0&0&0&0\\
0&0&0&0&0&0\\
0&0&0&1&0&0\\
0&0&0&0&1&0\\
0&0&0&0&0&1\\
0&0&0&0&0&0
\end{pmatrix} && \text{at $\textstyle t=0$} \\& \tiny \begin{pmatrix}
0&0&0&0&0&0\\
0&0&0&0&0&0\\
0&0&0&1&0&0\\
0&0&0&0&0&0\\
0&0&0&0&0&1\\
0&0&0&0&0&0
\end{pmatrix} && \text{at $\textstyle t=\frac{1}{2}$} \\& \tiny \begin{pmatrix}
0&0&0&0&0&0\\
0&0&0&0&0&0\\
0&0&0&0&0&0\\
0&0&0&0&0&0\\
0&0&0&0&0&1\\
0&0&0&0&0&0
\end{pmatrix} && \text{at $\textstyle t=\frac{1}{14}$} \\& \tiny \begin{pmatrix}
0&0&0&0&0&0\\
0&0&0&0&0&0\\
0&0&0&0&0&0\\
0&0&0&0&0&0\\
0&0&0&0&0&1\\
0&0&0&0&0&0
\end{pmatrix} && \text{at $\textstyle t=\frac{1}{18}$} \\& \tiny \begin{pmatrix}
0&0&0&0&0&0\\
0&0&0&0&0&0\\
0&0&0&0&0&0\\
0&0&0&0&0&0\\
0&0&0&0&0&1\\
0&0&0&0&0&0
\end{pmatrix} && \text{at $\textstyle t=-\frac{1}{18}$} \\& \tiny \begin{pmatrix}
0&0&0&0&0&0\\
0&0&0&0&0&0\\
0&0&0&0&0&0\\
0&0&0&0&0&0\\
0&0&0&0&0&1\\
0&0&0&0&0&0
\end{pmatrix} && \text{at $\textstyle t=-\frac{1}{14}$} \\& \tiny \begin{pmatrix}
0&0&0&0&0&0\\
0&0&0&0&0&0\\
0&0&0&1&0&0\\
0&0&0&0&0&0\\
0&0&0&0&0&1\\
0&0&0&0&0&0
\end{pmatrix} && \text{at $\textstyle t=-\frac{1}{2}$} \\\end{align*}
The operator $L_X$ is extremal.

\pagebreak

\subsection{$\MW{4}{3}$} \label{operator:MW^4_3}  \hfill [\hyperref[sec:MW^4_3]{description p.~\pageref*{sec:MW^4_3}}, \hyperref[table:index_2]{regularized quantum period p.~\pageref*{table:index_2}}]

The quantum differential operator is:
{\small
  \begin{multline*}
    (2t-1)^2(2t+1)^2(104t^2-4t-1)(104t^2+4t-1)(124883200t^6+3445552t^4-190621t^2+50) D^{6} \\
    \shoveleft{+ 2(2t-1)(2t+1)(59432414412800t^{12}-3046657163264t^{10}-307452667136t^8}\\
    \shoveright{+12555781056t^6-36422116t^4-548263t^2+100) D^{5}} \\
    \shoveleft{+ 4\left(1026559885312000t^{14}-181195540611072t^{12}-5355282845184t^{10}+794966941312t^8 \right.}\\
    \shoveright{\left. -20516326820t^6-1544700t^4-364427t^2+50\right) D^{4}} \\
    \shoveleft{+ 8t^2(2215208173568000t^{12}-216948202172416t^{10}-15687219459072t^8} \\
    \shoveright{+1005364344896t^6-20392735992t^4-36932632t^2-5075) D^{3}}\\
    \shoveleft{+ 64t^2(624378035507200t^{12}-30899527376640t^{10}-4029754891664t^8}\\
    \shoveright{+205143060452t^6-2542262335t^4-5022410t^2-175) D^{2}}\\
    \shoveleft{+ 256t^4(172050086041600t^{10}-3462364820096t^8} \\
    \shoveright{-968115899896t^6+47585432988t^4-312087909t^2-517438) D} \\
    + 1536t^4(11818946048000t^{10}-33971342080t^8-59641497680t^6+2994403590t^4-9592347t^2-9140)
  \end{multline*}}
The local log-monodromies for the quantum local system:
\begin{align*}
& \tiny \begin{pmatrix}
0&1&0&0&0&0\\
0&0&0&0&0&0\\
0&0&0&1&0&0\\
0&0&0&0&1&0\\
0&0&0&0&0&1\\
0&0&0&0&0&0
\end{pmatrix} && \text{at $\textstyle t=0$} \\& \tiny \begin{pmatrix}
0&0&0&0&0&0\\
0&0&0&0&0&0\\
0&0&0&0&0&0\\
0&0&0&0&0&0\\
0&0&0&0&\frac{5}{6}&0\\
0&0&0&0&0&\frac{1}{6}
\end{pmatrix} && \text{at $\textstyle t=\frac{1}{2}$} \\& \tiny \begin{pmatrix}
0&0&0&0&0&0\\
0&0&0&0&0&0\\
0&0&0&0&0&0\\
0&0&0&0&0&0\\
0&0&0&0&\frac{5}{6}&0\\
0&0&0&0&0&\frac{1}{6}
\end{pmatrix} && \text{at $\textstyle t=-\frac{1}{2}$} \\& \tiny \begin{pmatrix}
0&0&0&0&0&0\\
0&0&0&0&0&0\\
0&0&0&0&0&0\\
0&0&0&0&0&0\\
0&0&0&0&0&1\\
0&0&0&0&0&0
\end{pmatrix} && \text{at the roots of $\textstyle 104t^2-4t-1=0$} \\& \tiny \begin{pmatrix}
0&0&0&0&0&0\\
0&0&0&0&0&0\\
0&0&0&0&0&0\\
0&0&0&0&0&0\\
0&0&0&0&0&1\\
0&0&0&0&0&0
\end{pmatrix} && \text{at the roots of $\textstyle 104t^2+4t-1=0$} \\\end{align*}
The operator $L_X$ is extremal.

\pagebreak

\subsection{$\MW{4}{4}$} \label{operator:MW^4_4}  \hfill [\hyperref[sec:MW^4_4]{description p.~\pageref*{sec:MW^4_4}}, \hyperref[table:index_2]{regularized quantum period p.~\pageref*{table:index_2}}]

The quantum differential operator is:
{\small \[
(16t^2+1)(128t^2-1) D^{4} + 
64t^2(256t^2+7) D^{3} + 
16t^2(2816t^2+43) D^{2} + 
96t^2(512t^2+5) D + 
128t^2(144t^2+1)
\]}
The local log-monodromies for the quantum local system:
\begin{align*}
& \tiny \begin{pmatrix}
0&1&0&0\\
0&0&1&0\\
0&0&0&1\\
0&0&0&0
\end{pmatrix} && \text{at $\textstyle t=0$} \\& \tiny \begin{pmatrix}
0&0&0&0\\
0&0&0&0\\
0&0&0&1\\
0&0&0&0
\end{pmatrix} && \text{at the roots of $\textstyle 128t^2-1=0$} \\& \tiny \begin{pmatrix}
0&0&0&0\\
0&0&0&0\\
0&0&0&1\\
0&0&0&0
\end{pmatrix} && \text{at the roots of $\textstyle 16t^2+1=0$} \\& \tiny \begin{pmatrix}
0&1&0&0\\
0&0&0&0\\
0&0&0&1\\
0&0&0&0
\end{pmatrix} && \text{at $t=\infty$} \\\end{align*}
The ramification defect of $L_X$ is $1$.

\subsection{$\MW{4}{5}$} \label{operator:MW^4_5}  \hfill [\hyperref[sec:MW^4_5]{description p.~\pageref*{sec:MW^4_5}}, \hyperref[table:index_2]{regularized quantum period p.~\pageref*{table:index_2}}]

The quantum differential operator is:
{\small
  \begin{multline*}
    (32t^4-144t^3+40t^2-12t+1)(32t^4+144t^3+40t^2+12t+1)\\
    \shoveright{(1379024896t^8-181690112t^6+32203856t^4+160775t^2+136) D^{6}} \\
    \shoveleft{+2\left(14121214935040t^{16}-152437496479744t^{14}+18570092085248t^{12}-3527276827648t^{10}-\right.} \\
    \shoveright{\left.277037824256t^8-556812800t^6-140567296t^4-499733t^2-272\right) D^{5}}\\
    \shoveleft{+ 4\left(54366677499904t^{16}-340827602288640t^{14}+64054093561856t^{12}-13315483720192t^{10}\right.}\\
    \shoveright{\left.-406200526464t^8-986296160t^6+122964452t^4+370629t^2+136\right) D^{4}}\\
    \shoveleft{+ 8t^2(103084869025792t^{14}-369261049675776t^{12}+91720177627136t^{10}-21127821675520t^8}\\
    \shoveright{-305989253824t^6-1259677440t^4-1614768t^2-5831) D^{3}}\\
    \shoveleft{+128t^2(12720125640704t^{14}-26591028115456t^{12}+8287153333632t^{10}}\\
    \shoveright{-2016286904136t^8-15422020074t^6-337096385t^4-533452t^2-102) D^{2}}\\
    \shoveleft{+ 256t^4(6200095932416t^{12}-7989995554816t^{10}+3020985441920t^8}\\
    \shoveright{-739274260600t^6-4109085596t^4-179641501t^2-331640) D} \\
    \shoveleft{+ 1536t^4(386126970880t^{12}-333456404480t^{10}+147420386560t^8}\\
    -34919816144t^6-204213358t^4-10473275t^2-20944)
  \end{multline*}}
The local log-monodromies for the quantum local system:
\begin{align*}
& \tiny \begin{pmatrix}
0&1&0&0&0&0\\
0&0&0&0&0&0\\
0&0&0&1&0&0\\
0&0&0&0&1&0\\
0&0&0&0&0&1\\
0&0&0&0&0&0
\end{pmatrix} && \text{at $\textstyle t=0$} \\& \tiny \begin{pmatrix}
0&0&0&0&0&0\\
0&0&0&0&0&0\\
0&0&0&0&0&0\\
0&0&0&0&0&0\\
0&0&0&0&0&1\\
0&0&0&0&0&0
\end{pmatrix} && \text{at the roots of $\textstyle 32t^4-144t^3+40t^2-12t+1=0$} \\& \tiny \begin{pmatrix}
0&0&0&0&0&0\\
0&0&0&0&0&0\\
0&0&0&0&0&0\\
0&0&0&0&0&0\\
0&0&0&0&0&1\\
0&0&0&0&0&0
\end{pmatrix} && \text{at the roots of $\textstyle 32t^4+144t^3+40t^2+12t+1=0$} \\& \tiny \begin{pmatrix}
0&0&0&0&0&0\\
0&0&0&0&0&0\\
0&0&0&0&0&0\\
0&0&0&0&0&0\\
0&0&0&0&0&1\\
0&0&0&0&0&0
\end{pmatrix} && \text{at $t=\infty$} \\\end{align*}
The ramification defect of $L_X$ is $1$.

\subsection{$\MW{4}{6}$} \label{operator:MW^4_6}  \hfill [\hyperref[sec:MW^4_6]{description p.~\pageref*{sec:MW^4_6}}, \hyperref[table:index_2]{regularized quantum period p.~\pageref*{table:index_2}}]

The quantum differential operator is:
{\small
  \begin{multline*}
    (2t-1)^2(2t+1)^2(6t-1)(6t+1)(10t-1)(10t+1)(1433040t^6+80728t^4-2579t^2+1) D^{6} \\
    \shoveleft{+ 2(2t-1)(2t+1)(226993536000t^{12}-6984049920t^{10}}\\
    \shoveright{-2174297216t^8+67707232t^6-106452t^4-7441t^2+2) D^{5}} \\
    \shoveleft{+ 4\left(3920797440000t^{14}-584516290560t^{12}-52264472064t^{10}\right.}\\
    \shoveright{\left.+4812821760t^8-86750536t^6+175464t^4-4911t^2+1\right) D^{4}} \\
    \shoveleft{+ 8t^2(8460668160000t^{12}-550314631680t^{10}-112753881472t^8}\\
    \shoveright{+5245181728t^6-97715360t^4-329920t^2-69) D^{3}} \\
    \shoveleft{+ 32t^2(4769443728000t^{12}-59034734400t^{10}-53369502424t^8}\\
    \shoveright{+1881440232t^6-26802560t^4-94678t^2-5) D^{2}} \\
    \shoveright{+ 768t^4(219040164000t^{10}+4402781840t^8-2035415446t^6+67181120t^4-620103t^2-1737) D} \\
    + 4608t^4(15046920000t^{10}+593723200t^8-120630700t^6+4065994t^4-22137t^2-34)
  \end{multline*}}
The local log-monodromies for the quantum local system:
\begin{align*}
& \tiny \begin{pmatrix}
0&1&0&0&0&0\\
0&0&0&0&0&0\\
0&0&0&1&0&0\\
0&0&0&0&1&0\\
0&0&0&0&0&1\\
0&0&0&0&0&0
\end{pmatrix} && \text{at $\textstyle t=0$} \\& \tiny \begin{pmatrix}
0&0&0&0&0&0\\
0&0&0&0&0&0\\
0&0&0&0&0&0\\
0&0&0&0&0&0\\
0&0&0&0&\frac{1}{2}&1\\
0&0&0&0&0&\frac{1}{2}
\end{pmatrix} && \text{at $\textstyle t=\frac{1}{2}$} \\& \tiny \begin{pmatrix}
0&0&0&0&0&0\\
0&0&0&0&0&0\\
0&0&0&0&0&0\\
0&0&0&0&0&0\\
0&0&0&0&0&1\\
0&0&0&0&0&0
\end{pmatrix} && \text{at $\textstyle t=\frac{1}{6}$} \\& \tiny \begin{pmatrix}
0&0&0&0&0&0\\
0&0&0&0&0&0\\
0&0&0&0&0&0\\
0&0&0&0&0&0\\
0&0&0&0&0&1\\
0&0&0&0&0&0
\end{pmatrix} && \text{at $\textstyle t=\frac{1}{10}$} \\& \tiny \begin{pmatrix}
0&0&0&0&0&0\\
0&0&0&0&0&0\\
0&0&0&0&0&0\\
0&0&0&0&0&0\\
0&0&0&0&0&1\\
0&0&0&0&0&0
\end{pmatrix} && \text{at $\textstyle t=-\frac{1}{10}$} \\& \tiny \begin{pmatrix}
0&0&0&0&0&0\\
0&0&0&0&0&0\\
0&0&0&0&0&0\\
0&0&0&0&0&0\\
0&0&0&0&0&1\\
0&0&0&0&0&0
\end{pmatrix} && \text{at $\textstyle t=-\frac{1}{6}$} \\& \tiny \begin{pmatrix}
0&0&0&0&0&0\\
0&0&0&0&0&0\\
0&0&0&0&0&0\\
0&0&0&0&0&0\\
0&0&0&0&\frac{1}{2}&1\\
0&0&0&0&0&\frac{1}{2}
\end{pmatrix} && \text{at $\textstyle t=-\frac{1}{2}$} \\\end{align*}
The operator $L_X$ is extremal.

\pagebreak

\subsection{$\MW{4}{7}$} \label{operator:MW^4_7}  \hfill [\hyperref[sec:MW^4_7]{description p.~\pageref*{sec:MW^4_7}}, \hyperref[table:index_2]{regularized quantum period p.~\pageref*{table:index_2}}]

The quantum differential operator is:
{\small \[
(8t-1)(8t+1)(16t^2+1) D^{4} + 
64t^2(128t^2+3) D^{3} + 
16t^2(1456t^2+19) D^{2} + 
32t^2(864t^2+7) D + 
64t^2(180t^2+1)
\]}
The local log-monodromies for the quantum local system:
\begin{align*}
& \tiny \begin{pmatrix}
0&1&0&0\\
0&0&1&0\\
0&0&0&1\\
0&0&0&0
\end{pmatrix} && \text{at $\textstyle t=0$} \\& \tiny \begin{pmatrix}
0&0&0&0\\
0&0&0&0\\
0&0&0&1\\
0&0&0&0
\end{pmatrix} && \text{at $\textstyle t=\frac{1}{8}$} \\& \tiny \begin{pmatrix}
0&0&0&0\\
0&0&0&0\\
0&0&0&1\\
0&0&0&0
\end{pmatrix} && \text{at $\textstyle t=-\frac{1}{8}$} \\& \tiny \begin{pmatrix}
0&0&0&0\\
0&0&0&0\\
0&0&0&1\\
0&0&0&0
\end{pmatrix} && \text{at the roots of $\textstyle 16t^2+1=0$} \\& \tiny \begin{pmatrix}
0&0&0&0\\
0&0&0&0\\
0&0&\frac{1}{2}&0\\
0&0&0&\frac{1}{2}
\end{pmatrix} && \text{at $t=\infty$} \\\end{align*}
The ramification defect of $L_X$ is $1$.

\subsection{$\MW{4}{8}$} \label{operator:MW^4_8}  \hfill [\hyperref[sec:MW^4_8]{description p.~\pageref*{sec:MW^4_8}}, \hyperref[table:index_2]{regularized quantum period p.~\pageref*{table:index_2}}]

The quantum differential operator is:
{\small
  \begin{multline*}
    (2t-1)(2t+1)(8000t^6+528t^4+60t^2-1)(64404500t^8-1791160t^6+729408t^4+2144t^2+5) D^{6}  \\
    \shoveleft{+ 4\left(10304720000000t^{16}-1452885248000t^{14}+158410065280t^{12}\right.}\\
    \shoveright{\left.-11837396096t^{10}-726357052t^8+2787020t^6-1531744t^4-3536t^2-5\right) D^{5}} \\ 
    \shoveleft{+ 4\left(79346344000000t^{16}-7715142848000t^{14}+1509213811040t^{12}\right.}\\
    \shoveright{\left.-96608493568t^{10}-1083561820t^8+48291628t^6+2808508t^4+5602t^2+5\right) D^{4}}\\
    \shoveleft{+ 32t^2(37612228000000t^{14}-2748389976000t^{12}+827208001440t^{10}}\\
    \shoveright{-40569448408t^8-339211968t^6+5857560t^4-32366t^2-51) D^{3}}\\
    \shoveleft{+ 32t^2(74258388500000t^{14}-4572988268000t^{12}+1872888803480t^{10}}\\
    \shoveright{-62657669272t^8-239219588t^6-13755238t^4-70290t^2-15) D^{2}}\\  
    \shoveleft{+512t^4(4524416125000t^{12}-258077892500t^{10}+128903007200t^8}\\
    \shoveright{-2846763420t^6-5734916t^4-1401952t^2-4225) D} \\
    +3072t^4(281769687500t^{12}-15710668750t^{10}+8827474000t^8-131446365t^6-200152t^4-95918t^2-245)
  \end{multline*}}
The local log-monodromies for the quantum local system:
\begin{align*}
& \tiny \begin{pmatrix}
0&1&0&0&0&0\\
0&0&0&0&0&0\\
0&0&0&1&0&0\\
0&0&0&0&1&0\\
0&0&0&0&0&1\\
0&0&0&0&0&0
\end{pmatrix} && \text{at $\textstyle t=0$} \\& \tiny \begin{pmatrix}
0&0&0&0&0&0\\
0&0&0&0&0&0\\
0&0&0&0&0&0\\
0&0&0&0&0&0\\
0&0&0&0&0&1\\
0&0&0&0&0&0
\end{pmatrix} && \text{at $\textstyle t=\frac{1}{2}$} \\& \tiny \begin{pmatrix}
0&0&0&0&0&0\\
0&0&0&0&0&0\\
0&0&0&0&0&0\\
0&0&0&0&0&0\\
0&0&0&0&0&1\\
0&0&0&0&0&0
\end{pmatrix} && \text{at $\textstyle t=-\frac{1}{2}$} \\& \tiny \begin{pmatrix}
0&0&0&0&0&0\\
0&0&0&0&0&0\\
0&0&0&0&0&0\\
0&0&0&0&0&0\\
0&0&0&0&0&1\\
0&0&0&0&0&0
\end{pmatrix} && \text{at the roots of $\textstyle 8000t^6+528t^4+60t^2-1=0$} \\& \tiny \begin{pmatrix}
0&0&0&0&0&0\\
0&0&0&0&0&0\\
0&0&0&0&0&0\\
0&0&0&0&0&0\\
0&0&0&0&0&1\\
0&0&0&0&0&0
\end{pmatrix} && \text{at $t=\infty$} \\\end{align*}
The ramification defect of $L_X$ is $1$.

\subsection{$\MW{4}{9}$} \label{operator:MW^4_9}  \hfill [\hyperref[sec:MW^4_9]{description p.~\pageref*{sec:MW^4_9}}, \hyperref[table:index_2]{regularized quantum period p.~\pageref*{table:index_2}}]

The quantum differential operator is:
{\small
  \begin{multline*}
    (176t^4-144t^3+20t^2+8t-1)(176t^4+144t^3+20t^2-8t-1)(2109888t^6+174528t^4-2941t^2+2) D^{6} \\
    \shoveleft{+ 2\left(718914797568t^{14}-137405030400t^{12}-5419619584t^{10}\right.}\\
    \shoveright{\left.+2224774528t^8-45409520t^6-364024t^4+8407t^2-4\right) D^{5}} \\
    \shoveleft{+ 4\left(3104404807680t^{14}-245927227392t^{12}-51116043392t^{10}\right.}\\
    \shoveright{\left.+3834427616t^8-24121292t^6+553996t^4-5471t^2+2\right) D^{4}} \\
    \shoveleft{+ 8t^2(6698978795520t^{12}-43313504256t^{10}-101803764992t^8}\\
    \shoveright{+3311154624t^6-46433864t^4-300504t^2-107) D^{3}} \\
    \shoveright{+  64t^2(1888172529408t^{12}+75804905088t^{10}-23177683476t^8+452012856t^6-8043103t^4-46340t^2-4) D^{2}} \\
    \shoveright{+ 4608t^4(28905231168t^{10}+2010341344t^8-284249759t^6+4423780t^4-79284t^2-316) D} \\
    + 
    55296t^4(992819520t^{10}+86769760t^8-8133271t^6+123438t^4-1829t^2-4)
  \end{multline*}}
The local log-monodromies for the quantum local system:
\begin{align*}
& \tiny \begin{pmatrix}
0&1&0&0&0&0\\
0&0&0&0&0&0\\
0&0&0&1&0&0\\
0&0&0&0&1&0\\
0&0&0&0&0&1\\
0&0&0&0&0&0
\end{pmatrix} && \text{at $\textstyle t=0$} \\& \tiny \begin{pmatrix}
0&0&0&0&0&0\\
0&0&0&0&0&0\\
0&0&0&0&0&0\\
0&0&0&0&0&0\\
0&0&0&0&0&1\\
0&0&0&0&0&0
\end{pmatrix} && \text{at the roots of $\textstyle 176t^4-144t^3+20t^2+8t-1=0$} \\& \tiny \begin{pmatrix}
0&0&0&0&0&0\\
0&0&0&0&0&0\\
0&0&0&0&0&0\\
0&0&0&0&0&0\\
0&0&0&0&0&1\\
0&0&0&0&0&0
\end{pmatrix} && \text{at the roots of $\textstyle 176t^4+144t^3+20t^2-8t-1=0$} \\\end{align*}
The operator $L_X$ is extremal.

\pagebreak

\subsection{$\MW{4}{10}$} \label{operator:MW^4_10}  \hfill [\hyperref[sec:MW^4_10]{description p.~\pageref*{sec:MW^4_10}}, \hyperref[table:index_2]{regularized quantum period p.~\pageref*{table:index_2}}]

The quantum differential operator is:
{\small
  \begin{multline*}
    (4t^2+1)(16384t^6+512t^4+44t^2-1)(1473757184t^8+13746176t^6+7592448t^4+46808t^2+55) D^{6} \\ 
    \shoveleft{+ 4\left(482920754053120t^{16}+86447658893312t^{14}+4886210281472t^{12}+534951231488t^{10}\right.}\\
    \shoveright{\left.+18501230592t^8+119944000t^6+16234576t^4+72412t^2+55\right) D^{5}} \\
    \shoveleft{+ 4\left(3718489806209024t^{16}+396693178679296t^{14}+31496439267328t^{12}+3577124749312t^{10}\right.}\\
    \shoveright{\left.+37524877312t^8-202496928t^6-28001956t^4-105906t^2-55\right) D^{4}}\\
    \shoveleft{+ 16t^2(3525321504587776t^{14}+219939516448768t^{12}+31919813689344t^{10}}\\
    \shoveright{+2870602727424t^8+30098031616t^6+29514768t^4+38264t^2+755) D^{3}}\\
    \shoveleft{+ 64t^2(1740023841947648t^{14}+63218454626304t^{12}+17856939016192t^{10}}\\
    \shoveright{+1163732019200t^8+10148827776t^6+77992960t^4+145739t^2+55) D^{2}} \\
    \shoveleft{+ 256t^4(424064787152896t^{12}+8915881033728t^{10}+4878831042560t^8}\\
    \shoveright{+233990370304t^6+2025100864t^4+24310608t^2+53911) D} \\
    + 30720t^4(1320486436864t^{12}+15835594752t^{10}+16557547520t^8+617585152t^6+5801632t^4+78192t^2+187)
  \end{multline*}}
The local log-monodromies for the quantum local system:
\begin{align*}
& \tiny \begin{pmatrix}
0&1&0&0&0&0\\
0&0&0&0&0&0\\
0&0&0&1&0&0\\
0&0&0&0&1&0\\
0&0&0&0&0&1\\
0&0&0&0&0&0
\end{pmatrix} && \text{at $\textstyle t=0$} \\& \tiny \begin{pmatrix}
0&0&0&0&0&0\\
0&0&0&0&0&0\\
0&0&0&0&0&0\\
0&0&0&0&0&0\\
0&0&0&0&0&1\\
0&0&0&0&0&0
\end{pmatrix} && \text{at the roots of $\textstyle 4t^2+1=0$} \\& \tiny \begin{pmatrix}
0&0&0&0&0&0\\
0&0&0&0&0&0\\
0&0&0&0&0&0\\
0&0&0&0&0&0\\
0&0&0&0&0&1\\
0&0&0&0&0&0
\end{pmatrix} && \text{at the roots of $\textstyle 16384t^6+512t^4+44t^2-1=0$} \\& \tiny \begin{pmatrix}
0&0&0&0&0&0\\
0&0&0&0&0&0\\
0&0&0&0&0&0\\
0&0&0&0&0&0\\
0&0&0&0&0&1\\
0&0&0&0&0&0
\end{pmatrix} && \text{at $t=\infty$} \\\end{align*}
The ramification defect of $L_X$ is $1$.

\subsection{$\MW{4}{11}$} \label{operator:MW^4_11}  \hfill [\hyperref[sec:MW^4_11]{description p.~\pageref*{sec:MW^4_11}}, \hyperref[table:index_2]{regularized quantum period p.~\pageref*{table:index_2}}]

The quantum differential operator is:
{\small
  \begin{multline*}
    (176t^4-96t^3+8t^2-4t-1)(176t^4+96t^3+8t^2+4t-1)(46963840t^6-6320080t^4+10817t^2-60) D^{6} \\
    \shoveleft{+ 2\left(16002270986240t^{14}-4453229608960t^{12}+179162526976t^{10}+27549781760t^8\right.}\\
    \shoveright{\left.-288092960t^6+25314336t^4-28611t^2+120\right) D^{5}} \\
    \shoveleft{+ 4\left(69100715622400t^{14}-16714357862400t^{12}+879319798400t^{10}\right.} \\
    \shoveright{\left.+38172435520t^8-54159304t^6-23633284t^4+6819t^2-60\right) D^{4}}\\
    \shoveleft{+ 8t^2(149112070553600t^{12}-33265162726400t^{10}+1526941953280t^8}\\
    \shoveright{+17140032640t^6-188273072t^4+3160464t^2+1615) D^{3}}\\
    \shoveleft{+ 128t^2(21014345918720t^{12}-4511525020640t^{10}+157602014826t^8}\\
    \shoveright{-362882280t^6+22427697t^4+282960t^2+30) D^{2}}\\
    \shoveright{+ 512t^4(5790594508160t^{10}-1225795140240t^8+31283552519t^6-265518840t^4+10109387t^2+49308) D } \\
    + 15360t^4(79556744960t^{10}-16784238240t^8+319629734t^6-3585072t^4+146945t^2+456)
  \end{multline*}}
  The local log-monodromies for the quantum local system:
\begin{align*}
& \tiny \begin{pmatrix}
0&1&0&0&0&0\\
0&0&0&0&0&0\\
0&0&0&1&0&0\\
0&0&0&0&1&0\\
0&0&0&0&0&1\\
0&0&0&0&0&0
\end{pmatrix} && \text{at $\textstyle t=0$} \\& \tiny \begin{pmatrix}
0&0&0&0&0&0\\
0&0&0&0&0&0\\
0&0&0&0&0&0\\
0&0&0&0&0&0\\
0&0&0&0&0&1\\
0&0&0&0&0&0
\end{pmatrix} && \text{at the roots of $\textstyle 176t^4-96t^3+8t^2-4t-1=0$} \\& \tiny \begin{pmatrix}
0&0&0&0&0&0\\
0&0&0&0&0&0\\
0&0&0&0&0&0\\
0&0&0&0&0&0\\
0&0&0&0&0&1\\
0&0&0&0&0&0
\end{pmatrix} && \text{at the roots of $\textstyle 176t^4+96t^3+8t^2+4t-1=0$} \\\end{align*}
The operator $L_X$ is extremal.

\subsection{$\MW{4}{12}$} \label{operator:MW^4_12}  \hfill [\hyperref[sec:MW^4_12]{description p.~\pageref*{sec:MW^4_12}}, \hyperref[table:index_2]{regularized quantum period p.~\pageref*{table:index_2}}]

The quantum differential operator is:
{\small
  \begin{multline*}
    (16t^4+44t^2-1)(432t^4+36t^2+1)(314924112t^8-117964512t^6+14238144t^4+164850t^2+221) D^{6} \\
    \shoveleft{+ 4\left(10883777310720t^{16}+14017882540032t^{14}-7127443839744t^{12}\right.}\\
    \shoveright{\left.+913572411264t^{10}+49215528432t^8+208091040t^6+29910072t^4+249043t^2+221\right) D^{5}}\\
    \shoveleft{+4\left(83805085292544t^{16}+42638961696768t^{14}-38573795723328t^{12}+6607519967520t^{10}\right.}\\
    \shoveright{\left.+197257285008t^8+2651761872t^6-52981980t^4-386208t^2-221\right) D^{4}}\\
    \shoveleft{+ 16t^2(79451574368256t^{14}+656487804672t^{12}-24209536131072t^{10}}\\
    \shoveright{+5352527374416t^8+119397810240t^6+1063433832t^4+1841528t^2+1513) D^{3}}\\
    \shoveleft{+ 32t^2(78431220245376t^{14}-24146078481216t^{12}-14886119891520t^{10}}\\
    \shoveright{+4353594087744t^8+89900843112t^6+822085824t^4+1734954t^2+221) D^{2}}\\
    \shoveleft{+ 2304t^4(1061924105664t^{12}-538846899984t^{10}-114223084416t^8}\\
    \shoveright{+48172072716t^6+1084223724t^4+9219514t^2+22737) D}\\
    + 27648t^4(33067031760t^{12}-20911507560t^{10}-1683208512t^8+1243973394t^6+32605206t^4+248744t^2+663)
  \end{multline*}}
The local log-monodromies for the quantum local system:
\begin{align*}
& \tiny \begin{pmatrix}
0&1&0&0&0&0\\
0&0&0&0&0&0\\
0&0&0&1&0&0\\
0&0&0&0&1&0\\
0&0&0&0&0&1\\
0&0&0&0&0&0
\end{pmatrix} && \text{at $\textstyle t=0$} \\& \tiny \begin{pmatrix}
0&0&0&0&0&0\\
0&0&0&0&0&0\\
0&0&0&0&0&0\\
0&0&0&0&0&0\\
0&0&0&0&0&1\\
0&0&0&0&0&0
\end{pmatrix} && \text{at the roots of $\textstyle 432t^4+36t^2+1=0$} \\& \tiny \begin{pmatrix}
0&0&0&0&0&0\\
0&0&0&0&0&0\\
0&0&0&0&0&0\\
0&0&0&0&0&0\\
0&0&0&0&0&1\\
0&0&0&0&0&0
\end{pmatrix} && \text{at the roots of $\textstyle 16t^4+44t^2-1=0$} \\& \tiny \begin{pmatrix}
0&0&0&0&0&0\\
0&0&0&0&0&0\\
0&0&0&0&0&0\\
0&0&0&0&0&0\\
0&0&0&0&0&1\\
0&0&0&0&0&0
\end{pmatrix} && \text{at $t=\infty$} \\\end{align*}
The ramification defect of $L_X$ is $1$.

\pagebreak

\subsection{$\MW{4}{13}$} \label{operator:MW^4_13} \hfill [\hyperref[sec:MW^4_13]{description p.~\pageref*{sec:MW^4_13}}, \hyperref[table:index_2]{regularized quantum period p.~\pageref*{table:index_2}}]

The quantum differential operator is:
{\small
  \begin{multline*}
    (2t-1)(2t+1)(136t^3-20t^2+6t-1)(136t^3+20t^2+6t+1)(66683996t^6-1058780t^4-19394t^2+15) D^{6} \\
    \shoveleft{+ 4\left(27134518180352t^{14}-3636686279168t^{12}-76696829376t^{10}\right.}\\
    \shoveright{\left.+3834603264t^8-107218310t^6+2061400t^4+29091t^2-15\right) D^{5}}\\
    \shoveleft{+ 4\left(234343566103040t^{14}-21804386465792t^{12}+102596245216t^{10}\right.}\\
    \shoveright{\left.+15956069184t^8+542705432t^6-1694520t^4-36762t^2+15\right) D^{4}}\\
    \shoveleft{+ 32t^2(126422186976640t^{12}-8667906294144t^{10}+129634375864t^8}\\
    \shoveright{+2690207848t^6+17651600t^4-204240t^2-49) D^{3}}\\
    \shoveleft{+ 32t^2(285066614292448t^{12}-15534154687040t^{10}+251203935368t^8}\\
    \shoveright{-291393828t^6-1136928t^4-258660t^2-15) D^{2}}\\
    \shoveright{+ 128t^4(78551346664144t^{10}-3663416463632t^8+48257529676t^6-820898270t^4-4423658t^2-37545) D}\\
    + 3072t^4(1349017239080t^{10}-57143052340t^8+533807468t^6-19445050t^4-103234t^2-345)
  \end{multline*}}
The local log-monodromies for the quantum local system:
\begin{align*}
& \tiny \begin{pmatrix}
0&1&0&0&0&0\\
0&0&0&0&0&0\\
0&0&0&1&0&0\\
0&0&0&0&1&0\\
0&0&0&0&0&1\\
0&0&0&0&0&0
\end{pmatrix} && \text{at $\textstyle t=0$} \\& \tiny \begin{pmatrix}
0&0&0&0&0&0\\
0&0&0&0&0&0\\
0&0&0&0&0&0\\
0&0&0&0&0&0\\
0&0&0&0&0&1\\
0&0&0&0&0&0
\end{pmatrix} && \text{at $\textstyle t=\frac{1}{2}$} \\& \tiny \begin{pmatrix}
0&0&0&0&0&0\\
0&0&0&0&0&0\\
0&0&0&0&0&0\\
0&0&0&0&0&0\\
0&0&0&0&0&1\\
0&0&0&0&0&0
\end{pmatrix} && \text{at $\textstyle t=-\frac{1}{2}$} \\& \tiny \begin{pmatrix}
0&0&0&0&0&0\\
0&0&0&0&0&0\\
0&0&0&0&0&0\\
0&0&0&0&0&0\\
0&0&0&0&0&1\\
0&0&0&0&0&0
\end{pmatrix} && \text{at the roots of $\textstyle 136t^3-20t^2+6t-1=0$} \\& \tiny \begin{pmatrix}
0&0&0&0&0&0\\
0&0&0&0&0&0\\
0&0&0&0&0&0\\
0&0&0&0&0&0\\
0&0&0&0&0&1\\
0&0&0&0&0&0
\end{pmatrix} && \text{at the roots of $\textstyle 136t^3+20t^2+6t+1=0$} \\\end{align*}
The operator $L_X$ is extremal.

\subsection{$\MW{4}{14}$} \label{operator:MW^4_14}  \hfill [\hyperref[sec:MW^4_14]{description p.~\pageref*{sec:MW^4_14}}, \hyperref[table:index_2]{regularized quantum period p.~\pageref*{table:index_2}}]

The quantum differential operator is:
{\small
  \begin{multline*}
    (2t-1)(2t+1)(6t-1)(6t+1)(20t^2-4t+1)(20t^2+4t+1)(127920t^6-45016t^4-293t^2-1) D^{6} \\
    \shoveleft{+ 2\left(81050112000t^{14}-42347458560t^{12}+4138403072t^{10}\right.}\\
    \shoveright{\left.+109225472t^8+95280t^6+187248t^4+911t^2+2\right) D^{5}}\\
    \shoveleft{+ 4\left(349989120000t^{14}-177028331520t^{12}+12857605632t^{10}\right.}\\
    \shoveright{\left.+227619584t^8+1578760t^6-171496t^4-769t^2-1\right) D^{4}}\\
    \shoveleft{+ 8t^2(755239680000t^{12}-383806594560t^{10}+19129972096t^8}\\
    \shoveright{+278237408t^6+1853984t^4+10304t^2+13) D^{3}}\\
    \shoveright{+ 32t^2(425743344000t^{12}-221217604800t^{10}+7159648792t^8+109530520t^6+1224240t^4+4582t^2+1) D^{2}}\\
    \shoveright{+ 768t^4(19552572000t^{10}-10422296720t^8+211812198t^6+4163952t^4+55827t^2+173) D}\\
    + 23040t^4(268632000t^{10}-146189120t^8+1864348t^6+52366t^4+729t^2+2)
  \end{multline*}}
The local log-monodromies for the quantum local system:
\begin{align*}
& \tiny \begin{pmatrix}
0&1&0&0&0&0\\
0&0&0&0&0&0\\
0&0&0&1&0&0\\
0&0&0&0&1&0\\
0&0&0&0&0&1\\
0&0&0&0&0&0
\end{pmatrix} && \text{at $\textstyle t=0$} \\& \tiny \begin{pmatrix}
0&0&0&0&0&0\\
0&0&0&0&0&0\\
0&0&0&0&0&0\\
0&0&0&0&0&0\\
0&0&0&0&0&1\\
0&0&0&0&0&0
\end{pmatrix} && \text{at $\textstyle t=\frac{1}{2}$} \\& \tiny \begin{pmatrix}
0&0&0&0&0&0\\
0&0&0&0&0&0\\
0&0&0&0&0&0\\
0&0&0&0&0&0\\
0&0&0&0&0&1\\
0&0&0&0&0&0
\end{pmatrix} && \text{at $\textstyle t=\frac{1}{6}$} \\& \tiny \begin{pmatrix}
0&0&0&0&0&0\\
0&0&0&0&0&0\\
0&0&0&0&0&0\\
0&0&0&0&0&0\\
0&0&0&0&0&1\\
0&0&0&0&0&0
\end{pmatrix} && \text{at $\textstyle t=-\frac{1}{6}$} \\& \tiny \begin{pmatrix}
0&0&0&0&0&0\\
0&0&0&0&0&0\\
0&0&0&0&0&0\\
0&0&0&0&0&0\\
0&0&0&0&0&1\\
0&0&0&0&0&0
\end{pmatrix} && \text{at $\textstyle t=-\frac{1}{2}$} \\& \tiny \begin{pmatrix}
0&0&0&0&0&0\\
0&0&0&0&0&0\\
0&0&0&0&0&0\\
0&0&0&0&0&0\\
0&0&0&0&0&1\\
0&0&0&0&0&0
\end{pmatrix} && \text{at the roots of $\textstyle 20t^2-4t+1=0$} \\& \tiny \begin{pmatrix}
0&0&0&0&0&0\\
0&0&0&0&0&0\\
0&0&0&0&0&0\\
0&0&0&0&0&0\\
0&0&0&0&0&1\\
0&0&0&0&0&0
\end{pmatrix} && \text{at the roots of $\textstyle 20t^2+4t+1=0$} \\\end{align*}
The operator $L_X$ is extremal.

\subsection{$\MW{4}{15}$} \label{operator:MW^4_15}  \hfill [\hyperref[sec:MW^4_15]{description p.~\pageref*{sec:MW^4_15}}, \hyperref[table:index_2]{regularized quantum period p.~\pageref*{table:index_2}}]

The quantum differential operator is:
{\small
  \begin{multline*}
    (416t^4-144t^3+8t^2+4t-1)(416t^4+144t^3+8t^2-4t-1)(119179008t^6+10942640t^4+192779t^2+980) D^{6} \\
    \shoveleft{+ 2\left(226871066492928t^{14}+10977991065600t^{12}-661584438272t^{10}\right.}\\
    \shoveright{\left.-1433003264t^8-363743360t^6-47681568t^4-641057t^2-1960\right) D^{5}}\\
    \shoveleft{+ 4\left(979670514401280t^{14}+74598187089920t^{12}-1635236114944t^{10}\right.}\\
    \shoveright{\left.-46841533568t^8+1268915136t^6+55009348t^4+503977t^2+980\right) D^{4}}\\
    \shoveleft{+ 8t^2(2114025846865920t^{12}+203938713538560t^{10}-1101057117184t^8}\\
    \shoveright{-117410965440t^6-475968000t^4+3889200t^2-12299) D^{3}}\\
    \shoveleft{+ 128t^2(297929404790784t^{12}+33127871570560t^{10}+111493204952t^8}\\
    \shoveright{-15417741170t^6-129552291t^4+148450t^2-245) D^{2}}\\
    \shoveright{+ 256t^4(164191489173504t^{10}+19958501546880t^8+167108388648t^6-7614297740t^4-82910507t^2-24740) D}\\
    + 1536t^4(11279101317120t^{10}+1447829510400t^8+16271407536t^6-474232070t^4-5907869t^2-4760)
  \end{multline*}}
The local log-monodromies for the quantum local system:
\begin{align*}
& \tiny \begin{pmatrix}
0&1&0&0&0&0\\
0&0&0&0&0&0\\
0&0&0&1&0&0\\
0&0&0&0&1&0\\
0&0&0&0&0&1\\
0&0&0&0&0&0
\end{pmatrix} && \text{at $\textstyle t=0$} \\& \tiny \begin{pmatrix}
0&0&0&0&0&0\\
0&0&0&0&0&0\\
0&0&0&0&0&0\\
0&0&0&0&0&0\\
0&0&0&0&0&1\\
0&0&0&0&0&0
\end{pmatrix} && \text{at the roots of $\textstyle 416t^4-144t^3+8t^2+4t-1=0$} \\& \tiny \begin{pmatrix}
0&0&0&0&0&0\\
0&0&0&0&0&0\\
0&0&0&0&0&0\\
0&0&0&0&0&0\\
0&0&0&0&0&1\\
0&0&0&0&0&0
\end{pmatrix} && \text{at the roots of $\textstyle 416t^4+144t^3+8t^2-4t-1=0$} \\\end{align*}
The operator $L_X$ is extremal.

\subsection{$\MW{4}{16}$} \label{operator:MW^4_16}  \hfill [\hyperref[sec:MW^4_16]{description p.~\pageref*{sec:MW^4_16}}, \hyperref[table:index_2]{regularized quantum period p.~\pageref*{table:index_2}}]

The quantum differential operator is:
{\small
  \begin{multline*}
    (8t^2-4t+1)(8t^2+4t+1)(28t^2-4t-1)(28t^2+4t-1)(1245440t^6+159472t^4-353t^2+2) D^{6} \\
    \shoveleft{+ 2\left(687403171840t^{14}+55847092224t^{12}-2940634112t^{10} \right.}\\
    \shoveright{\left.+646645248t^8-7816080t^6-602296t^4+771t^2-4\right) D^{5}}\\
    \shoveleft{+ 4\left(2968331878400t^{14}+342433251328t^{12}-16569335296t^{10}\right.}\\
    \shoveright{\left.+962246144t^8+20691276t^6+609636t^4-327t^2+2\right) D^{4}}\\
    \shoveright{+ 8t^2(6405347737600t^{12}+905030160384t^{10}-30840349696t^8+529133120t^6+4111656t^4-69016t^2-79) D^{3}}\\
    \shoveright{+ 64t^2(1805409751040t^{12}+290148947200t^{10}-6893123664t^8+13678020t^6-1333787t^4-13834t^2-3) D^{2}}\\
    \shoveright{+ 256t^4(497488517120t^{10}+86925519744t^8-1508603672t^6-6132868t^4-554137t^2-2494) D}\\
    + 1536t^4(34174873600t^{10}+6291237120t^8-85724560t^6-530810t^4-38831t^2-116)
  \end{multline*}}
The local log-monodromies for the quantum local system:
\begin{align*}
& \tiny \begin{pmatrix}
0&1&0&0&0&0\\
0&0&0&0&0&0\\
0&0&0&1&0&0\\
0&0&0&0&1&0\\
0&0&0&0&0&1\\
0&0&0&0&0&0
\end{pmatrix} && \text{at $\textstyle t=0$} \\& \tiny \begin{pmatrix}
0&0&0&0&0&0\\
0&0&0&0&0&0\\
0&0&0&0&0&0\\
0&0&0&0&0&0\\
0&0&0&0&0&1\\
0&0&0&0&0&0
\end{pmatrix} && \text{at the roots of $\textstyle 8t^2-4t+1=0$} \\& \tiny \begin{pmatrix}
0&0&0&0&0&0\\
0&0&0&0&0&0\\
0&0&0&0&0&0\\
0&0&0&0&0&0\\
0&0&0&0&0&1\\
0&0&0&0&0&0
\end{pmatrix} && \text{at the roots of $\textstyle 28t^2-4t-1=0$} \\& \tiny \begin{pmatrix}
0&0&0&0&0&0\\
0&0&0&0&0&0\\
0&0&0&0&0&0\\
0&0&0&0&0&0\\
0&0&0&0&0&1\\
0&0&0&0&0&0
\end{pmatrix} && \text{at the roots of $\textstyle 28t^2+4t-1=0$} \\& \tiny \begin{pmatrix}
0&0&0&0&0&0\\
0&0&0&0&0&0\\
0&0&0&0&0&0\\
0&0&0&0&0&0\\
0&0&0&0&0&1\\
0&0&0&0&0&0
\end{pmatrix} && \text{at the roots of $\textstyle 8t^2+4t+1=0$} \\\end{align*}
The operator $L_X$ is extremal.

\pagebreak

\subsection{$\MW{4}{17}$} \label{operator:MW^4_17}  \hfill [\hyperref[sec:MW^4_17]{description p.~\pageref*{sec:MW^4_17}}, \hyperref[table:index_2]{regularized quantum period p.~\pageref*{table:index_2}}]

The quantum differential operator is:
{\small
  \begin{multline*} 
    (2752t^6-1152t^5+224t^4+96t^3-52t^2+12t-1)(2752t^6+1152t^5+224t^4-96t^3-52t^2-12t-1)\\
    \shoveleft{ (4516691026120601600t^{18}+10894175535784019520t^{16}+161057014788668272t^{14}+223186423846901825t^{12}}\\
    \shoveright{+1489656860655194t^{10}-37076036387883t^8-944190030122t^6+1145046509t^4+34077463t^2+3136) D^{8}} \\
    \shoveleft{+ 2\left(718350728614858094700134400t^{30}+1808773177510341584824565760t^{28}\right.}\\
+11029946474586078383243264t^{26}+38551285621538701581713408t^{24}-573876948723528931270656t^{22}\\
-55410907815277159905280t^{20}-10868619181236655325696t^{18}-205717988100675369984t^{16}\\
+54690133059712073568t^{14}-2288987306843185052t^{12}-22309755134355486t^{10}\\
\shoveright{\left.+253350406694334t^8+8580745232122t^6-7798176720t^4-238542241t^2-18816\right) D^{7}}\\
\shoveleft{+ 4\left(6234258109050375607576166400t^{30}+16563148489700147386089144320t^{28}\right.}\\
+153884025675978814793531392t^{26}+396376397592542781690715136t^{24}-244086311073605059692544t^{22}\\
-293272503356977770941696t^{20}-34041797107502106577312t^{18}+652775043341067168068t^{16}\\
-44917328848849757800t^{14}+11574061561240458564t^{12}+61805608644992588t^{10}\\
\shoveright{\left.-1336778298658238t^8-25795200795638t^6+35399715688t^4+581693399t^2+40768\right) D^{6}} \\
\shoveleft{+8\left(29067549125736936474830438400t^{30}+81851870301201769764088381440t^{28}\right.}\\
+939457832323447863369383936t^{26}+2148139133199504182578666496t^{24}+5371023638579317537625088t^{22}\\
-957869720387013465562880t^{20}-5020278555805201268384t^{18}-1791882536502223335404t^{16}\\
-85650713028232429760t^{14}-20843283163258659946t^{12}-101280721448407430t^{10}\\
\shoveright{\left.+1741410204251200t^8+30185303650830t^6-49184868070t^4-583650151t^2-37632\right) D^{5}}\\
\shoveleft{+16\left(79230237058744125391346073600t^{30}+236991057245518887863871979520t^{28}\right.}\\
+3064756009927980485218619392t^{26}+6725558154012574042351982336t^{24}+25714515380315969257756928t^{22}\\
-2214587183336657609276512t^{20}+14905942833336792786992t^{18}+2425296396558205778092t^{16}\\
+70030906648265241400t^{14}+13039662696372133523t^{12}+68594134702584874t^{10}\\
\shoveright{\left.-642583292426727t^8-12336212241784t^6+23535298811t^4+206616410t^2+12544\right) D^{4}}\\
\shoveleft{+64t^2(64244286367595768799677644800t^{28}+203835202251539030078676664320t^{26}}\\
+2826396757245234563525490688t^{24}+6176058394659175094246477056t^{22}+30060979005675148672022400t^{20}\\
-1750848106552043666387424t^{18}-31680726103451488064224t^{16}+328919199593772520366t^{14}\\
+33052797280494574660t^{12}-311682535251628245t^{10}-3943292702734802t^8\\
\shoveright{-5555362326911t^6+105428531260t^4-716272453t^2-28448) D^{3}}\\
\shoveleft{+512t^2(15009134071247813042455347200t^{28}+50222389883894861028705538560t^{26}}\\
+725335071957348016538310016t^{24}+1603230224901344022068592328t^{22}+9154893897382658634810352t^{20}\\
-428192374996045407448412t^{18}-15519523249001954206792t^{16}+52781806444610795042t^{14}\\
-1276462080564820298t^{12}-107386844952242335t^{10}-792211501649686t^8\\
\shoveright{-5580848553730t^6+16358540366t^4-35218463t^2-1568) D^{2}}\\
\shoveleft{+6144t^6(1216860211646830162557747200t^{24}+4254138344852923650551665920t^{22}}\\
+62879395621367262268113088t^{20}+141203555473546515828319348t^{18}+892698338486015205162136t^{16}\\
-36871341336439978899176t^{14}-1637218553379789670928t^{12}+2207989224928555682t^{10}\\
\shoveright{-562698415042789318t^8-9517978064339709t^6-42261757561740t^4-328350172920t^2+368309382) D}\\
\shoveleft{+15482880t^6(183729957560533831884800t^{24}+663265247268250820234880t^{22}}\\
+9929341713271041302112t^{20}+22607862359961003907882t^{18}+151963033359678362364t^{16}\\
-5846359991941694249t^{14}-276152453516510822t^{12}+64100253358393t^{10}\\
-116439515097332t^8-1405235953421t^6-3886412020t^4-21911230t^2-41272)
  \end{multline*}}
The local log-monodromies for the quantum local system:
\begin{align*}
& \tiny \begin{pmatrix}
0&1&0&0&0&0&0&0\\
0&0&0&0&0&0&0&0\\
0&0&0&1&0&0&0&0\\
0&0&0&0&0&0&0&0\\
0&0&0&0&0&1&0&0\\
0&0&0&0&0&0&1&0\\
0&0&0&0&0&0&0&1\\
0&0&0&0&0&0&0&0
\end{pmatrix} && \text{at $\textstyle t=0$} \\& \tiny \begin{pmatrix}
0&0&0&0&0&0&0&0\\
0&0&0&0&0&0&0&0\\
0&0&0&0&0&0&0&0\\
0&0&0&0&0&0&0&0\\
0&0&0&0&0&0&0&0\\
0&0&0&0&0&0&0&0\\
0&0&0&0&0&0&0&1\\
0&0&0&0&0&0&0&0
\end{pmatrix} && \text{at the roots of $\textstyle 2752t^6-1152t^5+224t^4+96t^3-52t^2+12t-1=0$} \\& \tiny \begin{pmatrix}
0&0&0&0&0&0&0&0\\
0&0&0&0&0&0&0&0\\
0&0&0&0&0&0&0&0\\
0&0&0&0&0&0&0&0\\
0&0&0&0&0&0&0&0\\
0&0&0&0&0&0&0&0\\
0&0&0&0&0&0&0&1\\
0&0&0&0&0&0&0&0
\end{pmatrix} && \text{at the roots of $\textstyle 2752t^6+1152t^5+224t^4-96t^3-52t^2-12t-1=0$} \\\end{align*}
The ramification defect of $L_X$ is $1$.

\subsection{$\MW{4}{18}$} \label{operator:MW^4_18}  \hfill [\hyperref[sec:MW^4_18]{description p.~\pageref*{sec:MW^4_18}}, \hyperref[table:index_2]{regularized quantum period p.~\pageref*{table:index_2}}]

The quantum differential operator is:
{\small
  \begin{multline*}
    (4t-1)(4t+1)(8t-1)(8t+1) D^{4} + 
    64t^2(128t^2-5) D^{3} + 
    16t^2(1472t^2-33) D^{2} + \\
    32t^2(896t^2-13) D + 
    128t^2(96t^2-1)
  \end{multline*}}
The local log-monodromies for the quantum local system:
\begin{align*}
& \tiny \begin{pmatrix}
0&1&0&0\\
0&0&1&0\\
0&0&0&1\\
0&0&0&0
\end{pmatrix} && \text{at $\textstyle t=0$} \\& \tiny \begin{pmatrix}
0&0&0&0\\
0&0&0&0\\
0&0&0&1\\
0&0&0&0
\end{pmatrix} && \text{at $\textstyle t=\frac{1}{4}$} \\& \tiny \begin{pmatrix}
0&0&0&0\\
0&0&0&0\\
0&0&0&1\\
0&0&0&0
\end{pmatrix} && \text{at $\textstyle t=\frac{1}{8}$} \\& \tiny \begin{pmatrix}
0&0&0&0\\
0&0&0&0\\
0&0&0&1\\
0&0&0&0
\end{pmatrix} && \text{at $\textstyle t=-\frac{1}{8}$} \\& \tiny \begin{pmatrix}
0&0&0&0\\
0&0&0&0\\
0&0&0&1\\
0&0&0&0
\end{pmatrix} && \text{at $\textstyle t=-\frac{1}{4}$} \\& \tiny \begin{pmatrix}
0&0&0&0\\
0&0&0&0\\
0&0&0&1\\
0&0&0&0
\end{pmatrix} && \text{at $t=\infty$} \\\end{align*}
The operator $L_X$ is extremal.

\bibliographystyle{amsplain} 
\bibliography{bibliography}

\providecommand{\bysame}{\leavevmode\hbox to3em{\hrulefill}\thinspace}
\providecommand{\MR}{\relax\ifhmode\unskip\space\fi MR }
\providecommand{\MRhref}[2]{%
  \href{http://www.ams.org/mathscinet-getitem?mr=#1}{#2}
}
\providecommand{\href}[2]{#2}
\begin{thebibliography}{10}

\bibitem{Batyrev}
V.~V. Batyrev, \emph{On the classification of toric {F}ano {$4$}-folds}, J.
  Math. Sci. (New York) \textbf{94} (1999), no.~1, 1021--1050, Algebraic
  geometry, 9.

\bibitem{BCFKvS:1}
Victor~V. Batyrev, Ionu{\c{t}} Ciocan-Fontanine, Bumsig Kim, and Duco van
  Straten, \emph{Conifold transitions and mirror symmetry for {C}alabi-{Y}au
  complete intersections in {G}rassmannians}, Nuclear Phys. B \textbf{514}
  (1998), no.~3, 640--666.

\bibitem{BCFKvS:2}
\bysame, \emph{Mirror symmetry and toric degenerations of partial flag
  manifolds}, Acta Math. \textbf{184} (2000), no.~1, 1--39.

\bibitem{Magma}
Wieb Bosma, John Cannon, and Catherine Playoust, \emph{The {M}agma algebra
  system. {I}. {T}he user language}, J. Symbolic Comput. \textbf{24} (1997),
  no.~3-4, 235--265, Computational algebra and number theory (London, 1993).

\bibitem{GRDB}
Gavin Brown and Alexander Kasprzyk, \emph{The {G}raded {R}ing {D}atabase},
  online, access via
  \href{http://grdb.lboro.ac.uk/}{\texttt{http://grdb.lboro.ac.uk/}}.

\bibitem{CC0}
\emph{{C}reative {C}ommons {C}{C}0 license},
  \url{https://creativecommons.org/publicdomain/zero/1.0/} and
  \url{https://creativecommons.org/publicdomain/zero/1.0/legalcode}.

\bibitem{Ciocan-Fontanine--Kim--Sabbah}
Ionu{\c{t}} Ciocan-Fontanine, Bumsig Kim, and Claude Sabbah, \emph{The
  abelian/nonabelian correspondence and {F}robenius manifolds}, Invent. Math.
  \textbf{171} (2008), no.~2, 301--343.

\bibitem{Coates}
Tom Coates, \emph{The quantum {L}efschetz principle for vector bundles as a map
  between {G}ivental cones},
  \href{http://arxiv.org/abs/1405.2893}{\texttt{arXiv:1405.2893 [math.AG]}},
  2014.

\bibitem{ProcECM}
Tom Coates, Alessio Corti, Sergey Galkin, Vasily Golyshev, and Alexander
  Kasprzyk, \emph{Mirror symmetry and {F}ano manifolds}, European Congress of
  Mathematics Krak{\'o}w, 2--7 July, 2012, 2014, pp.~285--300.

\bibitem{QC105}
Tom Coates, Alessio Corti, Sergey Galkin, and Alexander Kasprzyk, \emph{Quantum
  periods for $3$-dimensional {F}ano manifolds},
  \href{http://arxiv.org/abs/1303.3288}{\texttt{arXiv:1303.3288 [math.AG]}},
  2013.

\bibitem{CCIT:toric_stacks_2}
Tom Coates, Alessio Corti, Hiroshi Iritani, and Hsian-Hua Tseng, \emph{Some
  applications of the mirror theorem for toric stacks},
  \href{http://arxiv.org/abs/1401.2611}{\texttt{arXiv:1401:2611 [math.AG]}},
  2014.

\bibitem{Coates--Givental}
Tom Coates and Alexander Givental, \emph{Quantum {R}iemann-{R}och, {L}efschetz
  and {S}erre}, Ann. of Math. (2) \textbf{165} (2007), no.~1, 15--53.

\bibitem{Fujita:polarized_1}
Takao Fujita, \emph{On the structure of polarized manifolds with total
  deficiency one. {I}}, J. Math. Soc. Japan \textbf{32} (1980), no.~4,
  709--725.

\bibitem{Fujita:polarized_2}
\bysame, \emph{On the structure of polarized manifolds with total deficiency
  one. {II}}, J. Math. Soc. Japan \textbf{33} (1981), no.~3, 415--434.

\bibitem{Fujita:polarized_3}
\bysame, \emph{On the structure of polarized manifolds with total deficiency
  one. {III}}, J. Math. Soc. Japan \textbf{36} (1984), no.~1, 75--89.

\bibitem{Fujita:book}
\bysame, \emph{Classification theories of polarized varieties}, London
  Mathematical Society Lecture Note Series, vol. 155, Cambridge University
  Press, Cambridge, 1990.

\bibitem{Givental}
Alexander Givental, \emph{A mirror theorem for toric complete intersections},
  Topological field theory, primitive forms and related topics ({K}yoto, 1996),
  Progr. Math., vol. 160, Birkh\"auser Boston, Boston, MA, 1998, pp.~141--175.

\bibitem{Hartshorne}
Robin Hartshorne, \emph{Algebraic geometry}, Springer-Verlag, New
  York-Heidelberg, 1977, Graduate Texts in Mathematics, No. 52.

\bibitem{Iskovskikh:Fano_1}
V.~A. Iskovskih, \emph{Fano threefolds. {I}}, Izv. Akad. Nauk SSSR Ser. Mat.
  \textbf{41} (1977), no.~3, 516--562, 717.

\bibitem{Iskovskikh:anticanonical}
\bysame, \emph{Anticanonical models of three-dimensional algebraic varieties},
  Current problems in mathematics, {V}ol. 12 ({R}ussian), VINITI, Moscow, 1979,
  pp.~59--157, 239 (loose errata).

\bibitem{Iskovskikh--Prokhorov}
V.~A. Iskovskikh and Yu.~G. Prokhorov, \emph{Fano varieties}, Algebraic
  geometry, {V}, Encyclopaedia Math. Sci., vol.~47, Springer, Berlin, 1999,
  pp.~1--247.

\bibitem{Kedlaya}
Kiran~S. Kedlaya, \emph{{$p$}-adic differential equations}, Cambridge Studies
  in Advanced Mathematics, vol. 125, Cambridge University Press, Cambridge,
  2010.

\bibitem{Kobayashi--Ochiai}
Shoshichi Kobayashi and Takushiro Ochiai, \emph{Characterizations of complex
  projective spaces and hyperquadrics}, J. Math. Kyoto Univ. \textbf{13}
  (1973), 31--47.

\bibitem{Kollar}
Y{\'a}nos Koll{\'a}r, \emph{Higher-dimensional {F}ano varieties of large
  index}, Vestnik Moskov. Univ. Ser. I Mat. Mekh. (1981), no.~3, 31--34,
  80--81.

\bibitem{Mori--Mukai:Manuscripta}
Shigefumi Mori and Shigeru Mukai, \emph{Classification of {F}ano {$3$}-folds
  with {$B_{2}\geq 2$}}, Manuscripta Math. \textbf{36} (1981/82), no.~2,
  147--162.

\bibitem{Mori--Mukai:81}
\bysame, \emph{On {F}ano {$3$}-folds with {$B_{2}\geq 2$}}, Algebraic varieties
  and analytic varieties ({T}okyo, 1981), Adv. Stud. Pure Math., vol.~1,
  North-Holland, Amsterdam, 1983, pp.~101--129.

\bibitem{Mori--Mukai:84}
\bysame, \emph{Classification of {F}ano {$3$}-folds with {$B_2\geq 2$}. {I}},
  Algebraic and topological theories ({K}inosaki, 1984), Kinokuniya, Tokyo,
  1986, pp.~496--545.

\bibitem{Mori--Mukai:erratum}
\bysame, \emph{Erratum: ``{C}lassification of {F}ano 3-folds with {$B_2\geq
  2$}'' [{M}anuscripta {M}ath. {\bf 36} (1981/82), no. 2, 147--162]},
  Manuscripta Math. \textbf{110} (2003), no.~3, 407.

\bibitem{Mori--Mukai:fanoconf}
\bysame, \emph{Extremal rays and {F}ano 3-folds}, The {F}ano {C}onference,
  Univ. Torino, Turin, 2004, pp.~37--50.

\bibitem{Mukai:10}
Shigeru Mukai, \emph{Curves, {$K3$} surfaces and {F}ano {$3$}-folds of genus
  {$\leq 10$}}, Algebraic geometry and commutative algebra, {V}ol.\ {I},
  Kinokuniya, Tokyo, 1988, pp.~357--377.

\bibitem{Mukai:PNAS}
\bysame, \emph{Biregular classification of {F}ano {$3$}-folds and {F}ano
  manifolds of coindex {$3$}}, Proc. Nat. Acad. Sci. U.S.A. \textbf{86} (1989),
  no.~9, 3000--3002.

\bibitem{Obro}
Mikkel \O{}bro, \emph{An algorithm for the classification of smooth fano
  polytopes}, \href{http://arxiv.org/abs/0704.0049}{\texttt{arXiv:0704.0049
  [math.CO]}}, 2007.

\bibitem{Sato}
Hiroshi Sato, \emph{Toward the classification of higher-dimensional toric
  {F}ano varieties}, Tohoku Math. J. (2) \textbf{52} (2000), no.~3, 383--413.

\bibitem{Serpico}
Maria~Ezia Serpico, \emph{Fano varieties of dimensions {$n\geq 4$} and of index
  {$r\geq n-1$}}, Rend. Sem. Mat. Univ. Padova \textbf{62} (1980), 295--308.

\bibitem{Shokurov}
V.~V. Shokurov, \emph{A nonvanishing theorem}, Izv. Akad. Nauk SSSR Ser. Mat.
  \textbf{49} (1985), no.~3, 635--651.

\bibitem{Steel}
Allan~K. Steel, \emph{Computing with algebraically closed fields}, J. Symbolic
  Comput. \textbf{45} (2010), no.~3, 342--372.

\bibitem{Strangeway}
Andrew Strangeway, \emph{A reconstruction theorem for quantum cohomology of
  {F}ano bundles on projective space},
  \href{http://arxiv.org/abs/1302.5089}{\texttt{arXiv:1302.5089 [math.AG]}},
  2013.

\bibitem{Strangeway:thesis}
\bysame, \emph{Quantum reconstruction for {Fano} bundles}, Ph.D thesis,
  Imperial College London, 2014.

\bibitem{Szurek--Wisniewski}
Micha{\l} Szurek and Jaros{\l}aw~A. Wi{\'s}niewski, \emph{Fano bundles over
  {${\bf P}^3$} and {$Q_3$}}, Pacific J. Math. \textbf{141} (1990), no.~1,
  197--208.

\bibitem{Wilson}
P.~M.~H. Wilson, \emph{Fano fourfolds of index greater than one}, J. Reine
  Angew. Math. \textbf{379} (1987), 172--181.

\bibitem{Wisniewski:classification}
Jaros{\l}aw Wi{\'s}niewski, \emph{Fano {$4$}-folds of index {$2$} with
  {$b_2\geq 2$}. {A} contribution to {M}ukai classification}, Bull. Polish
  Acad. Sci. Math. \textbf{38} (1990), no.~1-12, 173--184.

\end{thebibliography}

\end{document}